 \newtheorem{Theorem}{Theorem}
 \newtheorem{Corollary}{Corollary}
 \newtheorem{Lemma}{Lemma}
 \newtheorem{Proposition}{Proposition}
 \newtheorem{Definition}{Definition}
 \newtheorem{Remark}{Remark}
 \newtheorem{Example}{Example}
\newcommand\I{\mathfrak{T}}
\newcommand\tT{{\tilde T^\sharp}}
\newcommand\tH{{\tilde H}}
\def\dt{\partial_t}
\def\T{T^\sharp}
\newcommand\mD{{\cal D}}
\def\<{\langle}
\def\>{\rangle}
\def\RR{\mathbb{R}}
\def\eps{\varepsilon}
\newcommand\tr{\operatorname{Tr}}
\newcommand\Div{\operatorname{div}}
\newcommand\id{\operatorname{id}}
\def\Ric{\mathcal Ric}
\def\vol{\operatorname{vol}}
\def\eq{\hspace*{-2mm}&=&\hspace*{-2mm}}
\def\plus{\hspace*{-2mm}&+&\hspace*{-2mm}}
\author{Vladimir Rovenski\footnote{Department of Mathematics, University of Haifa, Mount Carmel, 3498838 Haifa,  Israel.
 \newline E-mail: {\tt vrovenski@univ.haifa.ac.il} }
 \ \ and \
 Tomasz Zawadzki
 \footnote{
 Faculty of Mathematics and Computer Science, University of Lodz, Banacha 22, 90-238 Lodz, Poland.
 \newline E-mail: {\tt tomasz.zawadzki@wmii.uni.lodz.pl}}
 }
\title{Variational problem on a metric-affine almost product manifold}
\begin{document}

\date{}

\maketitle

\begin{abstract}
We study a variational problem on a smooth manifold with a decomposition of the tangent bundle into $k>2$ subbundles (distributions), namely, we consider the integrated sum of their mixed scalar curvatures as a functional of adapted pseudo-Riemannian metric (keeping the pairwise orthogonality of the distributions) and contorsion tensor, defining a linear connection. This functional allows us to generalize the class of Einstein metrics in the following sense:
if all of the  distributions are one-dimensional, then it coincides with the geometrical part of the Einstein-Hilbert action restricted to adapted metrics.
We prove that metrics in pairs metric-contorsion critical for our functional make all of the distributions totally umbilical.
We obtain examples and obstructions to existence of those critical pairs in some special cases: twisted products with statistical connections; semi-symmetric connections and 3-Sasaki manifolds with metric-compatible connections.

\vskip1.5mm\noindent
\textbf{Keywords}
Distribution;
mixed scalar curvature;
variation;
statistical connection;
semi-symmetric connection;
3-Sasaki manifold

\vskip1.5mm\noindent
\textbf{Mathematics Subject Classifications (2010)} 53C15; 57R25
\end{abstract}

\tableofcontents

\section{Introduction}

\textbf{State of the art}.
This paper links together two topics of differential geometry:
variational problems for metric and linear connection and almost product manifolds with $k\ge2$ factors.

Many canonical geometrical objects
are critical points of nonlinear variational problems, see~\cite{catino}. A~particularly famous of them is
the {integrated scalar curvature} (the geometrical part of Einstein-Hilbert action) with variable Riemannian metric and linear connection,
its Euler-Lagrange equations are the {Einstein equation} and the spin-connection equation,
e.g.,~\cite{ap}.
Both equations form the basis of the Einstein-Cartan theory of gravity within the framework of metric-affine geometry,
that considers a pseudo-Riemannian metric $g$ and a linear connection $\bar\nabla$
(instead of the Levi-Civita connection $\nabla$) on a manifold as independent variables, e.g.,~\cite{bf,mikes}.
The~following classes of metric-affine manifolds are popular:

$\bullet$~{Statistical manifolds}, where the (0,3)-tensor $\bar\nabla g$ is symmetric and $\bar\nabla$ is torsion-free,
are important for probability and statistics as well as information geometry, e.g., \cite{Amari2016}.

$\bullet$~{Riemann-Cartan manifolds}, where $\bar\nabla$ is {metric-compatible}, i.e., $\bar\nabla g=0$,
are important for theoretical physics, e.g., \cite{ap}.
Semi-symmetric connection, introduced by K.\,Yano \cite{Yano}, is a special case of a metric-compatible connection parameterized by a vector field.

\smallskip

In \cite{r-EH-k}, the first author studied the variation problem with mixed scalar curvature
\begin{equation}\label{Eq-Smix-g0}
 J_\mD : g\mapsto\int_{M} {\rm S}_{\,\mD_1,\ldots,\mD_k}\,{\rm d}\vol_g
\end{equation}
on a manifold with $k\ge2$ distributions (see \cite{bdrs,rz-2,rz-connections,rz-3} for $k=2$).
This analog of scalar curvature is the averaged sectional curvature of all planes spanned by two unit vectors from different distributions,
and is one of the simplest curvature invariants of an almost product manifold,
see \cite{r-affine,RS-1,Rov-Wa-2021,Walczak}.
Recall that a connected
manifold $M$ with a decomposition of the tangent~bundle into $k\ge2$ subbundles (distributions),
\begin{equation}\label{Eq-mD-k-TM}
 TM=\mD_1 + \ldots + \mD_k
\end{equation}
 is called an \textit{almost product manifold} \cite{RS-1}
(see \cite{bf} for $k=2$).
It appears in such topics as
multiply twisted or warped product manifolds, see \cite{Dimitru,MRS-99};
the theory of nets composed of foliations (that is, integrable distributions),~see~\cite{Krynski,RS-99};
para-$f$-manifolds, see \cite{tar};
lightlike manifolds, i.e., with degenerate metric of constant rank and index, see~\cite{dug};
hypersurfaces in space forms with $k$ distinct principal curvatures, see~\cite{cecil-ryan}.
An almost product manifold
admits a natural class of metrics that will be of our interest in this paper.
A pseudo-Riemannian metric $g$ on $(M,\mD_1,\ldots,\mD_k)$ is \textit{adapted}
(or, compatible, see \cite{RN-21}) if
all distributions are non-degenerate and pairwise orthogonal.
Any adapted metric is uniquely decomposed as $g=g_1\oplus\ldots\oplus g_k$,
where $g_\mu$ is a bundle metric on $\mD_\mu$, 
then we write $T M = \bigoplus_{\mu} \mD_\mu$.
A~special family of adapted metrics are multiconformally equivalent (to $g$) metrics
 $\tilde g=u_1^2\,g_1\oplus\ldots\oplus u_k^2\,g_k$,
where $u_\mu:M\to\RR$ are smooth functions without zeros, see~\cite{RN-21}.


\smallskip

\textbf{Objectives and results}.
We
study critical points of the {integrated mixed scalar curvature} on $(M,\mD_1,\ldots,\mD_k)$, depending on adapted metric $g$
and {contorsion tensor} $\I=\bar\nabla-\nabla$,
\begin{equation}\label{Eq-Smix-g}
 \bar J_\mD : (g,\I)\mapsto\int_{M} \overline{\rm S}_{\,\mD_1,\ldots,\mD_k}\,{\rm d}\vol_g \,.
\end{equation}
The mixed scalar curvature $\overline{\rm S}_{\,\mD_1,\ldots,\mD_k}$ is up to factor 2, see \eqref{E-Dk-Smix}, the sum of the mixed scalar curvatures
$\overline{\rm S}_{\,\mD_\mu,\mD^\bot_\mu}$, examined in \cite{rz-3}, and
if all distributions are 1-dimensional, the mixed scalar curvature reduces to the scalar curvature divided by two. 
If $M$ is a non-compact manifold, we integrate in \eqref{Eq-Smix-g} 
over an arbitrarily large, relatively compact domain $\Omega\subset M$, which contains supports of variations of $g$ and $\I$.
We find the Euler-Lagrange equation for \eqref{Eq-Smix-g} with fixed $\I$ for adapted variations of metric preserving the volume of the manifold. 
It can be presented in the form of the Einstein~equation:
\begin{equation}\label{E-geom}
 \overline\Ric_{\,\mD} - (1/2)\,\overline{\cal S}_{\mD}\cdot g + \lambda\,g = 0, 
\end{equation}
where the Ricci tensor of $\bar\nabla$ and the scalar curvature are replaced by the Ricci type tensor 
$\overline\Ric_{\,\mD} = \bigoplus\nolimits_{\mu=1}^k \overline\Ric_{\,\mD\,|\,\mD_\mu \times \mD_\mu}$
(introduced in \cite{bdrs} for $k=2$, $\dim M=4$, $\dim \mD_1=1$
and $\bar\nabla=\nabla$)
and its~trace $\overline{\cal S}_{\mD}$.
Although $\overline\Ric_{\,\mD}$ has complicated form even for $k=2$, see \cite{rz-3},
we write it explicitly in special cases: for statistical and semi-symmetric connections, 
and twisted~products; if all distributions are one-dimensional, it reduces to 
the Ricci tensor of ${\bar \nabla}$. 

We find the Euler-Lagrange equation for \eqref{Eq-Smix-g} with a fixed adapted metric for variations of $\I$,
%
which can be decomposed into independent equations -- some of which do not contain contorsion tensor, but significantly restrict metrics admitting critical contorsions.
Due to such restrictions, a natural setting to consider are twisted and warped products of manifolds, on which we characterize all critical pairs with statistical connections, in terms of contorsion tensor, mean curvatures of distributions and Ricci tensor of the metric.
We also prove the absence of non-trivial critical points of the action on e.g., 
harmonic distributions with semi-symmetric connections,
 or
complete 3-Sasaki manifolds with particular metric connections.
On the other hand,
considering only connections from certain families allows to reduce the variational problem to purely pseudo-Riemannian one.
In particular, we show that some critical pairs $(g,\I)$ of the action \eqref{Eq-Smix-g} 
restricted to adapted 
metrics and 
statistical connections can be obtained from critical adapted metrics of this action
with the fixed Levi-Civita connection.
Similarly, considering critical points among semi-symmetric connections only slightly modifies 
the tensor $\overline\Ric_{\,\mD}$.

The action \eqref{Eq-Smix-g}
may find applications in the theory of nets and theoretical physics, because it
is strongly related to the Einstein-Hilbert action.
Indeed,
\eqref{Eq-Smix-g}
and the relation
$\overline{\rm S}=2\,\overline{\rm S}_{\,\mD_1,\ldots,\mD_k}+\sum\nolimits_{\,\mu=1}^{\,k} \overline{\rm S}({\mD_\mu})$
of $\overline{\rm S}_{\,\mD_1,\ldots,\mD_k}$ with the scalar curvature $\overline{\rm S}$
(the trace of the Ricci tensor for~$\bar\nabla$)
and the scalar curvature $\overline{\rm S}({\mD_\mu})$ of~$\mD_\mu$,
allow us to generalize
the class of Einstein metrics, i.e.,
in the case of one-dimensional distributions any critical pair $(g, \I)$ for the Einstein-Hilbert action 
is also critical for~\eqref{Eq-Smix-g}.
However, since we consider variations of metric adapted to an almost-product structure, 
we obtain also critical pairs $(g, \I)$ with non-Einstein metrics.
In particular, on manifolds $(M,g)$ of constant scalar curvature and certain dimension,
we find decompositions \eqref{Eq-mD-k-TM} that make $g$ critical for the action \eqref{Eq-Smix-g}.
%
Finally, \eqref{Eq-Smix-g} can be combined with the Einstein-Hilbert action in vacuum as
 $\bar J_{\mD,\epsilon}: (g,\I) \mapsto \int_{M} (\overline{\rm S}+\epsilon\,\overline{\rm S}_{\,\mD_1,\ldots,\mD_k} )\,{\rm d}\vol_g\
 (\epsilon\in\RR)$,
and considered as a perturbation of the Einstein-Cartan theory.

\smallskip

\textbf{Structure of the article}.
The article consists of an Introduction and six sections.
Section~\ref{sec:prel} contains necessary results from \cite{rz-2,rz-connections},
among them the notion of the mixed scalar curvature is~central.
In Section~\ref{sec:adapted-metric}, we study adapted variations of metric (with fixed contorsion tensor)
and find the Euler-Lagrange equation for the action~\eqref{Eq-Smix-g}.
In~Section~\ref{sec:contorsion}, we study variations of $\I$
and, using results for $k=2$ from \cite{rz-3}, find the Euler-Lagrange equation for \eqref{Eq-Smix-g} with fixed adapted metric.
In subsequent sections, we examine solutions of the Euler-Lagrange equations in some special cases.
In Section~\ref{sectionEinstein}, for one-dimensional distributions we find critical pairs $(g, \I)$ with non-Einstein metrics
of constant scalar curvature for $k=3$ and for $k>3$ using Hadamard matrices.
In~Section~\ref{sec:cont-stat}, we apply the results of Sections~\ref{sec:adapted-metric} and \ref{sec:contorsion} to
statistical connections;
in particular, in Section~\ref{sec:contorsion-statistical} we consider twisted products, and
in Section~\ref{sec:metric-stat}, we study the action \eqref{Eq-Smix-g}
for adapted variations of metric and variations of contorsion tensor corresponding to statistical connections.
In Section~\ref{sec-metricconnections}, we apply the results of Sections~\ref{sec:adapted-metric} and \ref{sec:contorsion} to metric connections; in particular, in Section~\ref{sec:contorsion_semi_symmetric}, we consider semi-symmetric connections, and in Section~\ref{sec:5-Sasaki}, we examine
the Euler-Lagrange equations for \eqref{Eq-Smix-g} on a 3-Sasaki manifold.

\section{Preliminaries}
\label{sec:prel}

Here, we recall the properties of the mixed scalar curvature of a metric-affine almost product manifold $(M,g,\bar\nabla;\mD_1,\ldots,\mD_k)$,
see \cite{RS-1}.
We will use bar in the notation of objects related to~$\bar\nabla$. Recall that $\I=\bar\nabla-\nabla$, where $\nabla$ is the Levi-Civita connection of $g$, is the \emph{contorsion tensor}.

A {pseudo-Riemannian metric} $g=\<\cdot\,,\cdot\>$ of index $q$ on $M$ is an element $g\in{\rm Sym}^2(M)$
of the space of symmetric $(0,2)$-tensors
such that each $g_x\ (x\in M)$ is a {non-degenerate bilinear form of index} $q$ on the tangent space $T_xM$.
For~$q=0$ (i.e., $g_x$ is positive definite) $g$ is a Riemannian metric and for $q=1$ it is a Lorentz metric.
 A distribution $\mD_\mu$ on $(M,g)$ is \textit{non-degenerate}, if $g_x$ is non-degenerate on $\mD_\mu(x)\subset T_x M$ for all $x\in M$;
 in this case, the orthogonal complement $\mD_\mu^\bot$ is also non-degenerate, e.g., \cite{bf}.
Given an adapted metric $g$ on $(M;\mD_1,\ldots\mD_k)$, there is a~local orthonormal frame
$\{E_{\mu,a}\}$ on $M$, where $1\le a \le n_\mu=\dim \mD_\mu$ such that
 $\{E_{\mu,1},\ldots, E_{\mu,n_\mu}\}\subset\mD_\mu$ for $1\le \mu\le k$.
All quantities defined below using such frame do not depend on the choice of this~frame.
Similarly, $\{ {\cal E}_{\mu, i}\}$, where $i = 1, \ldots, n^\bot_\mu$, is an orthonormal frame of $\mD_\mu^\perp$ with $n^\bot_\mu=\dim\mD_\mu^\perp$.
Thus, the ranges of indices, e.g., $a$ or $i$, are determined by the index of the distribution, $\mD_\mu$ or $\mD^\bot_\mu$, respectively.
Unless explicitly stated otherwise, sums in all formulas will be taken over repeated indices, and always over full ranges of indices.

For the curvature tensor $\bar R_{X,Y}=[\bar\nabla_Y,\bar\nabla_X]+\bar\nabla_{[X,Y]}$ of $\bar\nabla$, we~get
 $\bar R_{X,Y} -R_{X,Y} = (\nabla_Y\,\I)_X -(\nabla_X\,\I)_Y +[\I_Y,\,\I_X]$,
see \cite{r-affine},
where $R_{X,Y}=[\nabla_Y,\nabla_X]+\nabla_{[X,Y]}$ is the curvature tensor of~$\nabla$.
The {mixed scalar curvature} of a pair of distributions $(\mD,\mD^\bot)$ on a
manifold $(M,g;\bar\nabla)$ is given~by
\[
 \bar{\rm S}_{\,\mD,\mD^\bot} = \frac12\sum\nolimits_{\,a,b} \eps_a\,\eps_b(\<\bar R_{\,{E}_a, {\cal E}_b}{E}_a, {\cal E}_b\>+\<\bar R_{\,{\cal E}_b, {E}_{a}}{\cal E}_b, {E}_{a}\>)\,,
\]
where we use a local orthonormal frame on $M$ such that ${E}_{a}\in\mD$ for $a\le\dim\mD$, ${\cal E}_{b}\in\mD^\perp$ for 
$b\le\dim\mD^\perp$
and 
$\eps_a=\<{E}_{a},{E}_{a}\>\in\{-1,1\}$, $\eps_b=\< {\cal E}_b , {\cal E}_b \>\in\{-1,1\}$.
If~$\mD$ is spanned by a unit vector field $N$,
then $\bar{\rm S}_{\,\mD,\mD^\bot}=\eps_N\overline{\rm Ric}_{N,N}$, where $\overline{\rm Ric}_{N,N}$ is the Ricci curvature of $\bar\nabla$ in the $N$-direction.
This concept can be generalized to $k>2$ distributions.

\begin{Definition}\rm
Given $(M;\mD_1,\ldots,\mD_k)$ with an adapted metric $g$ and a linear connection $\bar\nabla$,
the following function on $M$ is called the \textit{mixed scalar curvature} with respect to~$\bar\nabla$, see \cite{RS-1}:
\begin{equation}\label{E-Smix-k}
 \overline{\rm S}_{\,\mD_1,\ldots,\mD_k}=\frac12\sum\nolimits_{\,\nu<\mu}\sum\nolimits_{\,a,b}
 \eps_a\,\eps_b\big(\<\bar R_{{E}_{\nu,a},{E}_{\mu,b}}\,{E}_{\nu,a},\,{E}_{\mu,b}\> + \<\bar R_{{E}_{\mu,b},{E}_{\nu,a}}\,{E}_{\mu,b},\,{E}_{\nu,a}\>\big).
\end{equation}
If $\I=0$, then the above function is called the \textit{mixed scalar curvature} (with respect to $\nabla$):
\[
 {\rm S}_{\,\mD_1,\ldots,\mD_k}=\sum\nolimits_{\,\nu<\mu}\sum\nolimits_{\,a,b}\eps_a\,\eps_b\<R_{{E}_{\nu,a},{E}_{\mu,b}}\,{E}_{\nu,a},\,{E}_{\mu,b}\>.
\]
\end{Definition}

The~symmetric second fundamental form $h_\mu:\mD_\mu\times \mD_\mu\to \mD_\mu^\bot$
and the skew-symmetric integrability tensor $T_\mu:\mD_\mu\times \mD_\mu\to \mD_\mu^\bot$ (of the distribution $\mD_\mu$) are defined by
\[
 h_\mu(X,Y) = \frac12\,P_{\,\mu}^\bot(\nabla_XY+\nabla_YX),\quad
 T_\mu(X,Y) = \frac12\,P_{\,\mu}^\bot(\nabla_XY-\nabla_YX) = \frac12\,P_{\,\mu}^\bot\,[X,Y],
\]
where $P_\mu:TM\to\mD_\mu$ and $P_{\,\mu}^\bot:TM\to \mD_{\,\mu}^\bot$ are orthoprojectors.
The mean curvature vector field of $\mD_\mu$ is given by the trace of second fundamental form: $H_\mu=\tr_g\, h_\mu=\sum_{\,a} h_\mu (E_{\mu, a},E_{\mu, a})$.
Similarly, $\tilde h_{\,\mu},\,\tilde H_{\,\mu}=\tr_g \tilde h_{\,\mu}$ and $\tilde T_{\,\mu}$ are defined for
$\mD_{\,\mu}^\bot$, e.g., $\tilde h_{\,\mu} : \mD_\mu^\bot \times \mD_\mu^\bot \to \mD_\mu $ is given by $\tilde h_{\,\mu} (X,Y) = \frac12\,P_{\,\mu} (\nabla_XY+\nabla_YX)$.
We have $H_\mu=\sum\nolimits_{\,\nu\ne\mu} P_\nu H_\mu$ and ${\tilde H}_\mu=\sum\nolimits_{\,\nu\ne\mu} P_\mu H_\nu$. 
Set
\begin{equation} \label{defcalH}
 {\cal H} = \sum\nolimits_{\mu} H_\mu = \sum\nolimits_{\mu} {\tilde H}_\mu.
\end{equation}
To see that the above definition is valid, we use $P_\mu H_\mu=0$ to obtain
\begin{equation} \label{calH}
\sum\nolimits_{\,\mu} {\tilde H}_\mu = \sum\nolimits_{\,\mu} P_\mu \sum\nolimits_{\,\nu} H_\nu
= \sum\nolimits_{\,\nu} \sum\nolimits_{\,\mu} P_\mu H_\nu = \sum\nolimits_{\,\nu} H_\nu\,.
\end{equation}
A~distribution $\mD_\mu$ is called integrable if $T_\mu=0$, and $\mD_\mu$ is called {totally umbilical}, {harmonic}, or {totally geodesic},
if ${h}_\mu=({H}_\mu/n_\mu)\,g,\ {H}_\mu =0$, or ${h}_\mu=0$, respectively, e.g.,~\cite{bf}.
Totally umbilical and totally geodesic integrable distributions naturally appear on twisted products.

The squares of norms of tensors on $(M,g;\mD_1,\ldots\mD_k)$ are determined using
\begin{eqnarray*}
 &&\<h_\mu,h_\mu\>=\sum\nolimits_{\,a,b} \eps_a\eps_b\,\<h_\mu({E}_{\mu,a},{E}_{\mu,b}),h_\mu({E}_{\mu,a},{E}_{\mu,b})\>, \\
 &&\<T_\mu,T_\mu\>=\sum\nolimits_{\,a,b} \eps_a\eps_b\,\<T_\mu({E}_{\mu,a},{E}_{\mu,b}),T_\mu({E}_{\mu,a},{E}_{\mu,b})\>,\quad {\rm etc}.
\end{eqnarray*}
Similarly, for two $(0,s)$ or $(1,s)$ tensors $F_1,F_2$, we will denote by $\< F_1,F_2 \>$ their inner product defined by $g$.
Let $h_{\mu\nu},\,H_{\mu\nu},\,T_{\mu\nu}$ be the second fundamental forms, the mean curvature vector fields and the integrability tensors
related to the distributions ${\cal D}_\nu\oplus{\cal D}_\mu$ for $\mu\ne\nu$.

\begin{Definition}\rm
A pair $({\cal D}_\mu,{\cal D}_\nu)$ with $\mu\ne\nu$ of distributions on
$(M,g;{\cal D}_1,\ldots,{\cal D}_k)$ is called

a) \textit{mixed totally geodesic}, if $h_{\mu\nu}(X,Y)=0$ for all $X\in{\cal D}_\mu$ and $Y\in{\cal D}_\nu$.

b) \textit{mixed integrable}, if $T_{\mu\nu}(X,Y)=0$ for all $X\in{\cal D}_\mu$ and $Y\in{\cal D}_\nu$.
\end{Definition}

Let~$\mathfrak{X}_M$
be the module over $C^\infty(M)$ of all vector fields on $M$.
The ``musical" isomorphisms $\sharp$ and $\flat$ will be used for rank one and symmetric rank 2 tensors.
For~example, if $\omega\in\Lambda^1(M)$ is a 1-form and $X,Y\in\mathfrak{X}_M$ then $\omega(Y)=\<\omega^\sharp,Y\>$ and $X^\flat(Y) =\<X,Y\>$.
For arbitrary (0,2)-tensors $B$ and $C$ we also have $\<B, C\> =\tr_g(B^\sharp C^\sharp)=\<B^\sharp, C^\sharp\>$.

The shape operator $(A_\mu)_Z$ of $\mD_\mu$ with respect to $Z\in\mD_\mu^\bot$ (dual to the second fundamental form $h_\mu$) and the operator $(T_\mu^\sharp)_{Z}$ (dual to the integrability tensor $T_\mu$) are given~by
\[
 \<(A_\mu)_Z(X),Y\>= \< h_\mu(X,Y),Z\>,\quad \<(T_\mu^\sharp)_Z(X),Y\>=\<T_\mu(X,Y),Z\>, \quad X,Y \in \mD_\mu .
\]
Similarly, linear operators $(\tilde A_\mu)_Z$ and $(\tilde T_\mu^{\sharp})_Z$ on $\mD_\mu^\bot$ with $Z\in\mD_\mu$ are defined. To make formulas easier to read, we will sometimes write $A_{\mu, Z}$ instead of $(A_\mu)_Z$ and ${\tilde A}_{\mu, a}$ instead of  $({\tilde A}_\mu)_{E_{\mu,a}}$, etc.
The divergence of a $(1,s)$-tensor field $S$ on $(M,g)$ is a $(0,s)$-tensor field
 ${\rm div}\,S =\operatorname{trace}(Y\to\nabla_{Y} S)$,
\[
 (\Div S)(X_1,\ldots, X_s) = \sum\nolimits_{\,i}\<(\nabla_{ E_i }\,S)(X_1,\ldots, X_s), E_i\>\,,
\]
where $(E_1,\ldots, E_n)$ is a local orthonormal frame on $TM$. For $s=0$, this is the {divergence}
 ${\rm div}\,X =\tr\nabla X$
of a vector field $X\in\mathfrak{X}_M$.
The identity tranformation on $TM$ will be denoted by $\id$.

For the contorsion tensor, we define auxiliary (1,2)-tensors $\I^*$ and $\I^\wedge$ by
\[
\<\I^*_X Y,Z\> = \<\I_X Z, Y\>,\quad \I^\wedge_X Y = \I_Y X,
\quad X,Y,Z\in\mathfrak{X}_M\, ,
\]
similarly $\< \I^{*\wedge}_X Y,Z \> = \< \I^*_Y X, Z\> = \< \I_Y Z, X \>$.
Set $\I_{\mu, a} = \I_{E_{\mu,a}}$.
The partial traces of
$\,\I$ are defined by
 $\tr^\bot_\mu \I = \sum\nolimits_i \I_{{\cal E}_{\mu, i}}{\cal E}_{\mu, i},\
 \tr^\top_\mu \I = \sum\nolimits_a \I_{\mu, a} E_{\mu, a}$.
Note that $\tr^\bot_\mu \I=\sum\nolimits_{\,\nu\ne\mu} \tr^\top_\nu \I$.

Set $V_\mu=(\mD_\mu\times\mD_\mu^\bot)\cup(\mD_\mu^\bot\times\mD_\mu)$.
For $(M,g,\bar\nabla;\mD_\mu,\mD_\mu^\bot)$ we have the following equalities:
\begin{equation}\label{E-div-barQ}
 \Div X_\mu = \bar{\rm S}_{\,\mD_\mu,\mD_\mu^\bot} -Q(\mD_\mu,g) -\bar Q(\mD_\mu,g,\I)\,,
\end{equation}
see \cite{r-affine}, where $X_\mu =\frac12\,\big(P_\mu\tr_{\,\mu}^\bot(\I -\I^*) +P_\mu^\bot\tr_{\,\mu}^\top(\I -\I^*)\big) +H_\mu+\tilde H_\mu$ and
\begin{eqnarray}\label{E-func-Q}
 Q(\mD_\mu,g)\eq\<\tilde H_\mu,\tilde H_\mu\>+\<H_\mu,H_\mu\>-\<h_\mu,h_\mu\>-\<\tilde h_\mu,\tilde h_\mu\>+\<T_\mu,T_\mu\>+\<\tilde T_\mu,\tilde T_\mu\>\,,\\
\label{E-barQ}
\nonumber
 2\,\bar Q(\mD_\mu,g,\I) \eq \<\tr_{\,\mu}^\top\I,\, \tr_{\,\mu}^\bot\I^*\> +\<\tr_{\,\mu}^\bot\I,\, \tr_{\,\mu}^\top\I^*\>\\
\nonumber
 \plus \<\tr_{\,\mu}^\top(\I-\I^*) -\tr_{\,\mu}^\bot(\I -\I^*), H_\mu -\tilde H_\mu\> \\
 \plus \<\I -\I^* +\I^\wedge - \I^{*\wedge},\ \tilde A_\mu -\tilde T_\mu^{\sharp} + A_\mu -T_\mu^{\sharp}\> -\<\I^*,\,\I^\wedge\>_{\,|\,V_\mu}\,.
\end{eqnarray}
In a local adapted frame, two terms in the last line of \eqref{E-barQ} have the following form:
\begin{eqnarray*}
 && \<\I -\I^* +\I^\wedge - \I^{*\wedge},\ \tilde A_\mu -\tilde T_\mu^{\sharp} + A_\mu - T_\mu^{\sharp}\>
 = \sum\nolimits_{\,a,b}\big(\<(\I_{{\cal E}_{\mu,b}} -\I^*_{{\cal E}_{\mu,b}}) {E}_{\mu,a}\\
 && +\,(\I_{{\mu,a}} -\I^*_{{\mu,a}}) {\cal E}_{\mu,b},\ ((\tilde A_\mu)_{{E}_{\mu,a}} -(\tilde T_\mu^{\sharp})_{{E}_{\mu,a}}) {\cal E}_{\mu,b} +((A_\mu)_{{\cal E}_{\mu,b}}-(T_\mu^{\sharp})_{{\cal E}_{\mu,b}}) {E}_{\mu,a}\>\big),\\
 && \<\I^*,\ \I^\wedge\>_{\,|\,V_\mu}
 = \sum\nolimits_{\,a,b}\big(\<\I_{{\mu,a}} {\cal E}_{\mu,b},\,\I^*_{{\cal E}_{\mu,b}} {E}_{\mu,a}\> + \<\I^*_{{\mu,a}} {\cal E}_{\mu,b},\,\I_{{\cal E}_{\mu,b}} {E}_{\mu,a}\>\,\big).
\end{eqnarray*}
The following result, see  \cite[Proposition~2]{RS-1}, generalizes \eqref{E-div-barQ} for $k>2$.

\begin{Proposition}\label{P-decomp2}
 For~an almost product manifold $(M,g;\mD_1,\ldots,\mD_k)$
 equipped with a linear connection $\bar\nabla=\nabla+\I$ we~have
\begin{eqnarray}\label{E-Q1Q2-gen}
 \Div\,Y = 2\,\bar{\rm S}_{\,\mD_1,\ldots,\mD_k} - \sum\nolimits_{\,\mu}\big( Q(\mD_\mu,g) +\bar Q(\mD_\mu,g,\I) \big),
\end{eqnarray}
where tensors $Q(\mD_\mu,g)$ and $\bar Q(\mD_\mu,g,\I)$ are given by \eqref{E-func-Q} and \eqref{E-barQ} with $\mD=\mD_\mu$, and
\begin{equation}\label{E-prop-X}
 Y=\sum\nolimits_{\,\mu}\big(\frac12\,P_\mu\tr_{\,\mu}^\bot(\I -\I^*) +\frac12\,P_\mu^\bot\tr_{\,\mu}^\top(\I -\I^*) +H_{\,\mu}+\tilde H_{\,\mu}\big).
\end{equation}
\end{Proposition}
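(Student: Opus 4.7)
The strategy is to apply the $k=2$ identity \eqref{E-div-barQ} to each pair $(\mD_\mu,\mD_\mu^\bot)$ for $\mu=1,\ldots,k$ and sum the resulting $k$ equations. Comparing formula \eqref{E-prop-X} for $Y$ with the definition of $X_\mu$ given right before \eqref{E-div-barQ}, one sees that $Y=\sum_\mu X_\mu$ literally, so by linearity of the divergence the left-hand side of the sum is $\Div Y$. The non-curvature contributions on the right already appear in the desired form $\sum_\mu\bigl(Q(\mD_\mu,g)+\bar Q(\mD_\mu,g,\I)\bigr)$, with no further manipulation needed.

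The only nontrivial step is to verify the curvature identity
\[
 \sum\nolimits_{\,\mu=1}^k \bar{\rm S}_{\,\mD_\mu,\mD_\mu^\bot} \;=\; 2\,\bar{\rm S}_{\,\mD_1,\ldots,\mD_k}.
\]
To this end I would fix a local orthonormal frame $\{E_{\nu,a}\}$ adapted to the decomposition \eqref{Eq-mD-k-TM}. For each fixed $\mu$, the orthonormal frame $\{\mathcal E_{\mu,i}\}$ of $\mD_\mu^\bot$ may be chosen as the union $\bigcup_{\nu\ne\mu}\{E_{\nu,b}\}$. Substituting into the formula for $\bar{\rm S}_{\,\mD_\mu,\mD_\mu^\bot}$ given just before \eqref{E-Smix-k} yields
\[
 \bar{\rm S}_{\,\mD_\mu,\mD_\mu^\bot}
 = \tfrac12\sum\nolimits_{\,\nu\ne\mu}\sum\nolimits_{\,a,b}\eps_a\eps_b
 \bigl(\<\bar R_{E_{\mu,a},E_{\nu,b}}E_{\mu,a},E_{\nu,b}\>
 + \<\bar R_{E_{\nu,b},E_{\mu,a}}E_{\nu,b},E_{\mu,a}\>\bigr).
\]

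Summing over $\mu$ converts the double index into one ranging over all ordered pairs $(\mu,\nu)$ with $\mu\ne\nu$. Reorganising over unordered pairs, each pair $\{\mu,\nu\}$ with $\nu<\mu$ is picked up twice: once when the outer index is $\mu$ and once when it is $\nu$. Swapping the dummy labels $a\leftrightarrow b$ in the second contribution shows that the two summands are identical and match, term for term, the corresponding $\{\mu,\nu\}$-summand of the definition \eqref{E-Smix-k}; the prefactors $\tfrac12$ therefore combine into the factor $2$ on the right.

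The main expected obstacle is exactly this combinatorial bookkeeping: one must keep track of the $\tfrac12$ present in both definitions, verify that the signs and $\eps$-factors are preserved under the $a\leftrightarrow b$ relabelling, and confirm that the ordered-pair double sum really reassembles into twice the unordered-pair sum of \eqref{E-Smix-k}. Once this identity is in hand, \eqref{E-Q1Q2-gen} follows at once by assembling the summed form of \eqref{E-div-barQ} and using $Y=\sum_\mu X_\mu$.
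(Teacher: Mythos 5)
Your proposal is correct and follows essentially the same route as the paper: sum the $k$ copies of the two-distribution identity \eqref{E-div-barQ}, observe that $Y=\sum_\mu X_\mu$, and invoke the decomposition $2\,\overline{\rm S}_{\,\mD_1,\ldots,\mD_k}=\sum_\mu\overline{\rm S}_{\,\mD_\mu,\mD_\mu^\bot}$, which the paper obtains from $\overline{\rm S}_{\,\mD_\mu,\mD_\mu^\bot}=\sum_{\nu\ne\mu}\overline{\rm S}_{\,\mD_\mu,\mD_\nu}$ and you verify directly by the same frame-splitting and $a\leftrightarrow b$ relabelling. The only difference is that you spell out the combinatorial bookkeeping that the paper delegates to a citation, and your verification is sound.
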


\begin{proof}
For a pair of complementary distributions $(\mD_\mu,\mD_\mu^\bot)$ on $(M,g)$ we have
 $\overline{\rm S}_{\,\mD_\mu,\mD_\mu^\bot} = \sum\nolimits_{\,a,\,b}
 \eps_a \eps_b\,\<\overline R_{\,{E}_{\mu,a}, {\cal E}_{\mu,b}}{E}_{\mu,a}, {\cal E}_{\mu,b}\>$.
Thus,
from the equality $\overline{\rm S}_{\,\mD_\mu,\mD^\bot_\mu}=\sum\nolimits_{\,\nu\ne\mu}\overline{\rm S}_{\,\mD_\mu,\mD_\nu}$
and definition \eqref{E-Smix-k} we obtain the following decomposition formula of the mixed scalar curvature,
see \cite{RS-1}: 
\begin{equation}\label{E-Dk-Smix}
 2\,\overline{\rm S}_{\,\mD_1,\ldots,\mD_k} = \sum\nolimits_{\,\mu}\overline{\rm S}_{\,\mD_\mu,\mD^\bot_\mu}\,.
\end{equation}
Summing $k$ copies of \eqref{E-div-barQ} with $\mD_\mu\ (\mu=1,\ldots,k)$ and using \eqref{E-Dk-Smix} yields \eqref{E-Q1Q2-gen}.
\end{proof}

\begin{Remark}
\rm
For a statistical connection $\bar\nabla$ on $(M,g)$ we have $\I^\wedge=\I$ and $\I^* = \I$;
in this case, \eqref{E-barQ} has a shorter form
 $2\,\bar Q(\mD_\mu,g,\I)= 2\,\<\tr_{\,\mu}^\top\I,\ \tr_{\,\mu}^\bot\I\> -\<\I,\,\I\>_{\,|\,V_\mu}$\,,
and \eqref{E-Q1Q2-gen} reduces~to
\begin{equation}\label{eqvarstat}
 2\,\overline{\rm S}_{\,\mD_1,\ldots,\mD_k} = 2\,{\rm S}_{\,\mD_1,\ldots,\mD_k}
 -\sum\nolimits_{\,\mu}\big(\<\tr_{\,\mu}^\bot\I,\,\tr_{\,\mu}^\top\I\> -\frac12\,\<\I,\,\I\>_{\,|V_\mu}\big) .
\end{equation}
\end{Remark}

\section{Adapted variations of metric}
\label{sec:adapted-metric}

Here, we define adapted variations $g_t\ (|t|<\eps)$ of a pseudo-Riemannian metric $g = g_0$
and similarly to the case of \eqref{Eq-Smix-g0}, see \cite{r-EH-k},
find a general form of the Euler-Lagrange equation for the action \eqref{Eq-Smix-g} with respect to those variations. 
Let~infinitesimal variations
 ${B}_t\equiv\partial g_t/\partial t$
be supported in a relatively compact domain $\Omega\subset M$, i.e., on $M\setminus\Omega$ we have $g_t=g$ and ${B}_t=0$ for all $t$.
 We~adopt notations $\partial_t \equiv\partial/\partial t,\ {B}\equiv{\dt g_t}_{\,|\,t=0}$,
and write ${B}$ instead of ${B}_t$ to make formulas easier to~read.
The~volume form ${\rm d}\vol_g$ of metric $g$ varies as follows, e.g., \cite{rz-2},
\begin{equation}\label{E-dotvolg}
 \partial_t\,\big({\rm d}\vol_{g}\!\big) = \frac12\,(\tr_{g}\,{B})\,{\rm d}\vol_{g}
 =\frac12\,\<{B},\,g\>\,{\rm d}\vol_{g}\,.
\end{equation}
For any variations $g_t$ of metric, the Euler-Lagrange equation
means vanishing of the partial gradient $\delta_g\bar J_\mD(g,\I)$ of the functional \eqref{Eq-Smix-g} with fixed tensor $\I$, where
 ${\rm\frac{d}{dt}}\,\bar J_\mD(g_t,\I)_{|\,t=0} =\int_{\Omega}\<\delta_g \bar J_\mD, {B}\>\,{\rm d}\vol_g$
and ${B} =\dt g_{t\,|\,t=0}$. 
Solutions $g$ of $\delta_g \bar J_\mD=0$ are called critical metrics.
For variations preserving the volume of $\Omega$, i.e., ${\rm Vol}(\Omega,g_t) = {\rm Vol}(\Omega,g)$ for all~$t$,
using \eqref{E-dotvolg}, we get
\[
 0 = \dt\int_{M}{\rm d}\vol_g = \int_{M} \dt\,({\rm d}\vol_g) = \int_{M}\frac{1}{2}\,(\tr_g{B})\,{\rm d}\vol_g =\frac{1}{2} \int_{\Omega}\<g,\,{B}\>\,{\rm d}\vol_g.
\]
Hence, $g$ is critical for variations of metric preserving the volume of $\Omega$ if and only if
the condition
 $\int_{\Omega}\<\delta_g \bar J_\mD,\,{B}\>\,{\rm d}\vol_g =0$
holds for all tensors ${B}$ satisfying
 $\int_{\,\Omega}\<g,\,{B}\>\,{\rm d}\vol_g=0$.
Thus, the Euler-Lagrange equation for variations preserving the volume of $\Omega$ is
\begin{equation}\label{E-delta-g-J}
 \delta_g \bar J_\mD = \lambda\,g
\end{equation}
for some $\lambda\in\RR$.
Following \cite{rz-3}, where $k=2$, we define auxiliary Casorati type operators
${\cal T}_\mu:\mD_\mu\to\mD_\mu$ and self-adjoint $(1,1)$-tensors ${\cal K}_\mu$ (using the Lie bracket) by
\[
 {\cal T}_\mu=\sum\nolimits_{\,a}\eps_a(\T_{\mu,a})^2,\quad
 {\cal K}_\mu=\sum\nolimits_{\,a}\eps_{\,a}\,[\T_{\mu,a},\, A_{\mu,a}]\,.
\]
For any $(1,2)$-tensors $P,P'$ and a $(0,2)$-tensor $S$ define the $(0,2)$-tensor $\Upsilon_{P,P'}$~by
\[
 \<\Upsilon_{P,P'}, S\> = \sum\nolimits_{\,\lambda, \mu} \eps_\lambda\, \eps_\mu\,
 \big[S(P(e_{\lambda}, e_{\mu}), P'( e_{\lambda}, e_{\mu})) + S(P'(e_{\lambda}, e_{\mu}), P( e_{\lambda}, e_{\mu}))\big],
\]
where we use the inner product of tensors induced by $g$,
$\{e_{\lambda}\}$ is a local orthonormal basis of $TM$ and $\eps_\lambda = \<e_{\lambda}, e_{\lambda}\>\in\{-1,1\}$.
If 
$g$ is a Riemannian metric,
then $\Upsilon_{h_\mu,h_\mu}=0$ if and only if $h_\mu=0$.
Thus, $\Upsilon_{h_\mu,h_\mu}$ measures ``non-total geodesy" of~$\mD_\mu$;
similarly, $\Upsilon_{T_\mu,T_\mu}$ measures ``non-integrability" of~$\mD_\mu$.

\begin{Definition}[see \cite{r-EH-k}]\rm
A family of adapted metrics $g_t\ (|t|<\eps)$ on $(M;\mD_1,\ldots\mD_k)$ such that $g_0 =g$
and $B_t$ are compactly supported, is called an \textit{adapted variation} of $g$.
In~this case, distributions $\mD_\mu$ and $\mD_\nu$ are $g_t$-orthogonal for all $\mu\ne\nu$ and all~$t$.
An adapted variation $g_t$ is called a $\mD_\mu$-\textit{variation} (for some fixed $\mu\in\{1,\ldots, k\}$) if
the metric changes along $\mD_\mu$ only,~i.e.,
 $g_t(X,Y)=g_0(X,Y),\quad X,Y\in\mD_\mu^\bot,\quad |t|<\eps$.
\end{Definition}

An adapted variation $g_t$ is a sum $g_t=g_1(t)\oplus\ldots\oplus g_k(t)$ of $\mD_\mu$\,-variations $g_\mu(t) =g_t|_{\,\mD_\mu}$.
In~this case, the tensor ${B}_t=\dt\, g_t$ is a sum ${B}_t=\sum\nolimits_{\,\mu}{B}_\mu(t)$,
where ${B}_\mu(t) =\dt g_\mu(t) ={B}_t|_{\,\mD_\mu}$.
A~special case of adapted variations is a multiconformal variation of metric, see~\cite{RN-21}.

In~view of Proposition~\ref{P-decomp2}, we need the variation of $\sum\nolimits_{\,\nu}\big(\bar Q(\mD_\nu,g,\I) + Q(\mD_\nu,g)\big)$.

\begin{Lemma}[see \cite{r-EH-k}]\label{propvar1}
Let $g_t$
be a $\mD_\mu$-variation of an adapted metric $g$ on $(M;\mD_1,\ldots\mD_k)$,
 then
\begin{eqnarray*}
\nonumber
 && \dt\<\tilde{h}_\mu, \tilde{h}_\mu\> = -\< (1/2)\Upsilon_{\tilde h_\mu, \tilde h_\mu},\ {B}_\mu\>\,,\quad
 \dt \<h_\mu,\ h_\mu\> = \<\,\Div{h}_\mu + {\cal K}_\mu^\flat,\ {B}_\mu\> - \Div\<h_\mu, {B}_\mu\>\,,\\
 && \dt \<\tilde {H}_\mu, \tilde {H}_\mu\> = -\<\,\tilde{H}_\mu^\flat\otimes\tilde{H}_\mu^\flat,\ {B}_\mu\>\,,\quad
 \dt \<H_\mu, H_\mu\> = \<\,(\Div H_\mu)\,g_\mu,\ {B}_\mu\> -\Div((\tr {B}_\mu^\sharp) H_\mu)\,, \\
 && \dt\<\tilde{T}_\mu, \tilde{T}_\mu\> = \<\,(1/2)\Upsilon_{\tilde T_\mu, \tilde T_\mu},\ {B}_\mu\>\,,\quad
 \dt\<T_\mu,\ T_\mu\> = \< 2\,{\cal T}_\mu^\flat,\ {B}_\mu\>\,,
\end{eqnarray*}
and for $\nu\ne\mu$ we have dual equations
\begin{eqnarray*}
 && \dt\<{h}_\nu, {h}_\nu\> = \<-(1/2)\Upsilon_{h_\nu,h_\nu},\ {B}_\mu\>\,,\quad
 \dt \<\tilde h_\nu,\ \tilde h_\nu\> = \<\,\Div\tilde{h}_\nu + \tilde{\cal K}_\nu^\flat,\ {B}_\mu\> - \Div\<\tilde h_\nu, {B}_\mu\>\,,\\
 && \dt \<{H}_\nu, {H}_\nu\> = -\<\,{H}_\nu^\flat\otimes{H}_\nu^\flat,\ {B}_\mu\>\,,\quad
 \dt \<\tilde H_\nu, \tilde H_\nu\> = \<\,(\Div\tilde H_\nu)\,g_\nu^\bot,\ {B}_\mu\> -\Div((\tr {B}_\mu^\sharp) \tilde H_\nu)\,, \\
 && \dt\<{T}_\nu, {T}_\nu\> = \<\,(1/2)\Upsilon_{T_\nu,T_\nu},\ {B}_\mu\>\,,\quad
 \dt\<\tilde T_\nu,\ \tilde T_\nu\> = \< 2\,\tilde{\cal T}_\nu^\flat,\ {B}_\mu\>\, .
\end{eqnarray*}
\end{Lemma}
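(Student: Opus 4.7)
My plan is to reduce the two batches of identities to the $k=2$ variation formulas already established in \cite{r-EH-k, rz-2, rz-connections}, by observing that once attention is restricted to the pair $(\mD_\nu,\mD_\nu^\bot)$, the given $\mD_\mu$-variation behaves either as a $\mD_\nu$-variation or as a $\mD_\nu^\bot$-variation. The key structural remark is that $B=B_\mu$ is supported in $\mD_\mu$, so it vanishes on $\mD_\nu$ for every $\nu\neq\mu$; hence, viewed relative to the splitting $TM=\mD_\nu\oplus\mD_\nu^\bot$, it is a $\mD_\nu^\bot$-variation when $\nu\neq\mu$ and a $\mD_\nu$-variation when $\nu=\mu$. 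Nothing in the computation needs to use the finer decomposition $\mD_\mu^\bot=\bigoplus_{\nu\neq\mu}\mD_\nu$ beyond what is already encoded in the tensors $h_\nu,\tilde h_\nu,H_\nu,\tilde H_\nu,T_\nu,\tilde T_\nu$ themselves.

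With this in place, the first batch of six formulas is immediate from the $k=2$ results with $\mD=\mD_\mu$. Concretely, I would differentiate each squared norm after selecting an adapted $g_t$-orthonormal frame, apply the standard identity
\[
 2\<(\dt\nabla)(X,Y),Z\> = (\nabla_X B)(Y,Z)+(\nabla_Y B)(X,Z)-(\nabla_Z B)(X,Y),
\]
and regroup so that one factor of $B_\mu$ appears under the inner product. Divergence terms of the form $\Div\<h_\mu,B_\mu\>$ or $\Div((\tr B_\mu^\sharp)H_\mu)$ arise from the usual integration-by-parts manipulations that isolate a single $B_\mu$ factor, exactly as in \cite{rz-2} for $k=2$.

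For the dual batch ($\nu\neq\mu$), I would apply the same $k=2$ formulas, but now to the pair $(\mD_\nu,\mD_\nu^\bot)$. Because the variation is a $\mD_\nu^\bot$-variation in this picture, the roles of $h_\mu$ and $\tilde h_\mu$ are swapped, and analogously for the pairs $(H_\mu,\tilde H_\mu)$, $(T_\mu,\tilde T_\mu)$, $({\cal K}_\mu,\tilde{\cal K}_\nu)$, $({\cal T}_\mu,\tilde{\cal T}_\nu)$, while the metric $g_\mu$ is replaced by $g_\nu^\bot$. This accounts for the substitution pattern visible on comparing the two batches term by term.

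The main obstacle I expect is bookkeeping: although $B_\mu$ lives only in $\mD_\mu\subset\mD_\nu^\bot$, one must verify that the $k=2$ dual formula, which a priori contemplates a $\mD_\nu^\bot$-variation spread over the whole complement, specializes cleanly. This reduces to checking that every pairing against $B_\mu$ automatically annihilates contributions from the other summands $\mD_\lambda$ with $\lambda\notin\{\mu,\nu\}$, while the intrinsic objects $\Div\tilde h_\nu$, $\tilde{\cal K}_\nu^\flat$, $\tilde{\cal T}_\nu^\flat$ and the trace $\tr B_\mu^\sharp$ are already defined purely from the pair $(\mD_\nu,\mD_\nu^\bot)$ and need no adjustment. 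Once this compatibility is recorded, both batches follow directly from the $k=2$ case.
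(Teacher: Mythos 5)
Your proposal is correct and follows essentially the same route as the paper: the lemma is quoted from \cite{r-EH-k}, and both there and in the analogous Lemma~\ref{P-dT-3} the $k>2$ formulas are obtained exactly by viewing the $\mD_\mu$-variation, relative to each pair $(\mD_\nu,\mD_\nu^\bot)$, as a $\mD_\nu$-variation when $\nu=\mu$ and a $\mD_\nu^\bot$-variation when $\nu\ne\mu$, and then invoking the two-distribution variation formulas of \cite{rz-2}. Your closing remark about the specialization being clean because every term is paired against $B_\mu$ (which vanishes off $\mD_\mu\times\mD_\mu$) is precisely the point that makes the reduction work, and it matches how the paper writes the restricted tensors (e.g.\ $\Div\tilde h_\nu|_{\mD_\mu}$, $(P_\mu\tilde{\cal K}_\nu)^\flat$) in Proposition~\ref{C-vr-terms}.
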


Variational formulas of terms of $\bar Q$ in \eqref{E-barQ} obtained in the following lemma are a special case (i.e., for adapted variations of metric) of equations from \cite[Lemma 3]{rz-3} for $(\mD_\mu,\mD_\mu^\bot)$.
Detailed proof of such variational formulas 
in particular setting
will be given further below in Lemma~\ref{propdeltaQforstatistical}.

\begin{Lemma}\label{P-dT-3}
Let $g_t$
be a $\mD_\mu$-variation of an adapted metric $g$ on $(M;\mD_1,\ldots\mD_k)$
with fixed statistical connection $\bar\nabla=\nabla+\I$. Then
\begin{eqnarray*}
\nonumber
 && \dt \<\I^*,\ \I^\wedge\>_{\,|\,V_\mu} = -\sum\nolimits_{\,a,b} {B}_\mu({E}_{\mu,a}, {E}_{\mu,b}) \<\I_{{\mu,a}}, \I_{{\mu,b}}\>_{\,|\,\mD_\mu^\bot}\,,\\
\nonumber
 && \dt \<\tr_{\,\mu}^\bot\I,\ \tr_{\,\mu}^\top\I^*\> =0\,,\\
\nonumber
 && \dt \<\tr_{\,\mu}^\top\I^*,\ \tr_{\,\mu}^\bot\I\> = -\sum\nolimits_{\,a,b} {B}_\mu({E}_{\mu,a}, {E}_{\mu,b})\<\I_{{\mu,a}}{E}_{\mu,b}\,,\ \tr_{\,\mu}^\bot\I\>,\\
\nonumber
 && \dt \<\Theta, \tilde A_\mu\> = -2 \sum\nolimits_{\,a,b} {B}_\mu({E}_{\mu,a}, {E}_{\mu,b}) \<(\tilde A_\mu)_{{E}_{\mu,a}},\, \I_{{\mu,b}}\>\,, \\
\nonumber
 && \dt \<\Theta, \tilde T_\mu^{\sharp}\> =-2\sum\nolimits_{\,a,b}{B}_\mu({E}_{\mu,a}, {E}_{\mu,b})\<(\tilde T^{\sharp}_\mu)_{{E}_{\mu,a}},\, \I_{{\mu,b}}\>\,,\\
\nonumber
 && \dt \<\Theta, T_\mu^{\sharp}\>
 =-6\sum\nolimits_{\,a,b} {B}_\mu({E}_{\mu,a}, {E}_{\mu,b})\<T_\mu({E}_{\mu,b},\,\cdot),\, \I_{{\mu,a}}\>\,,\\
\nonumber
 && \dt \<\Theta, A_\mu\>
 = 2\sum\nolimits_{\,a,b} {B}_\mu({E}_{\mu,a}, {E}_{\mu,b})\< h_\mu({E}_{\mu,b},\,\cdot),\, \I_{{\mu,a}}\>\,,\\
\nonumber
 && \dt \<\tr_{\,\mu}^\top(\I^*-\I),\, H_\mu - \tilde H_\mu\> =\sum\nolimits_{\,a,b} {B}_\mu({E}_{\mu,a}, {E}_{\mu,b})
  \big(\<\I_{{\mu,b}}\,{E}_{\mu,a}, H_\mu -\tilde H_\mu\> \\
\nonumber
 &&\quad +\,\<\tr_{\,\mu}^\top\I, {E}_{\mu,a}\>\<\tilde H_\mu, {E}_{\mu,b}\> \big), \\
 && \dt\<\,\tr_{\,\mu}^\bot(\I^*-\I),\, H_\mu - \tilde H_\mu\> =\sum\nolimits_{\,a,b} {B}_\mu({E}_{\mu,a}, {E}_{\mu,b})\<\tr_{\,\mu}^\bot\I, {E}_{\mu,b}\> \<\tilde H_\mu, {E}_{\mu,a}\>\,,
\end{eqnarray*}
where $\Theta=\I -\I^* +\I^\wedge - \I^{*\wedge}$, and for $\nu\ne\mu$ we get dual equations.
\end{Lemma}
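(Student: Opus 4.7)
My plan is to deduce the nine identities by specializing the general variational formulas of \cite[Lemma~3]{rz-3}, which handle $\bar Q(\mD,\mD^\bot,g,\I)$ under arbitrary variations of $g$ with $\I$ held fixed, to our adapted setting: take $\mD=\mD_\mu$ inside $TM=\bigoplus_\nu \mD_\nu$, let $g_t$ be a $\mD_\mu$-variation so that ${B}={B}_\mu$ is supported in $\mD_\mu\times\mD_\mu$, and assume $\bar\nabla$ is statistical. The statistical hypothesis will enter through $\I^*=\I$, $\I^\wedge=\I$, and hence $\Theta=\I-\I^*+\I^\wedge-\I^{*\wedge}=0$ at $t=0$; in particular, $\dt\<\Theta,S\>|_{t=0}=\<\dt\Theta|_{t=0},S\>$ for every $t$-independent operator $S$, so the task for the four $\<\Theta,\cdot\>$ identities reduces to computing $\dt\I^*|_{t=0}$, which is nontrivial even though $\I$ itself is fixed, because $\I^*$ is defined by raising indices with~$g$.

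To carry this out concretely I would fix a local $g_t$-orthonormal adapted frame $\{E_{\mu,a}(t)\}\cup\{{\cal E}_{\mu,i}\}$ in which the ${\cal E}_{\mu,i}$ are $t$-independent (possible because $g_t|_{\mD_\mu^\bot}=g|_{\mD_\mu^\bot}$) and $\{E_{\mu,a}(t)\}$ is $g_t$-orthonormal in $\mD_\mu$. Differentiating the orthonormality condition at $t=0$, together with a symmetric gauge choice that kills the anti-symmetric part, yields
\[
 \dt E_{\mu,a}|_{t=0}=-\tfrac12\sum\nolimits_c \eps_c\,{B}_\mu(E_{\mu,a},E_{\mu,c})\,E_{\mu,c}.
\]
The variation of each contraction in the statement then decomposes into three pieces: (i) derivative of the inner product, which contributes only when both slots lie in $\mD_\mu$; (ii) derivative of the frame vectors $E_{\mu,a}(t)$; and (iii) derivative of $\I^*$ (and $\I^{*\wedge}$), computed by differentiating the defining identity $\<\I^*_X Y,Z\>_{g_t}=\<\I_X Z,Y\>_{g_t}$ at $t=0$ and using $\I^*=\I$ there to simplify. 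Every resulting term then collapses to a pairing of ${B}_\mu(E_{\mu,a},E_{\mu,b})$ with components of $\I$ and the standard tensors $h_\mu,\tilde h_\mu, T_\mu, \tilde T_\mu, H_\mu, \tilde H_\mu$ appearing on the right-hand sides.

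Working through the identities this way is mechanical. The asymmetry between the second and third lines, where $\dt\<\tr^\bot_\mu\I,\tr^\top_\mu\I^*\>=0$ while $\dt\<\tr^\top_\mu\I^*,\tr^\bot_\mu\I\>$ does not vanish, reflects that $\tr^\bot_\mu\I$ is $t$-independent and in the first pairing the variation of $\tr^\top_\mu\I^*$ symmetrizes over the $\mD_\mu$-frame in a way that annihilates $\tr^\bot_\mu\I$, while in the second pairing the $*$ sits on a slot that becomes a $t$-dependent vector and that cancellation is broken. The dual formulas for $\nu\ne\mu$ come from the identical argument applied to contractions involving the $\mD_\nu\subset\mD_\mu^\bot$ block, using that a $\mD_\mu$-variation preserves $g_t|_{\mD_\nu}$ exactly. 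The main obstacle I foresee is bookkeeping rather than any new idea: one must carefully track which index contractions survive the restriction ${B}={B}_\mu$ and consistently separate statistical identities applied at $t=0$ from their differentiated versions, but no step goes beyond the techniques of \cite[Lemma~3]{rz-3}.
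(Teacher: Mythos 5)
Your proposal is correct and takes essentially the same route as the paper, which likewise obtains these identities by specializing the variational formulas of \cite[Lemma~3]{rz-3} to $\mD_\mu$-variations of an adapted metric with a statistical connection; the frame-variation computation you sketch is exactly the kind of detailed verification the paper defers to Lemma~\ref{propdeltaQforstatistical}. The only minor imprecision is your parenthetical treating $\tilde A_\mu, T_\mu^\sharp,\ldots$ as $t$-independent operators (they do vary with $g_t$), but since $\Theta|_{t=0}=0$ for a statistical connection the extra terms vanish anyway and your reduction to $\dt\I^*|_{t=0}$ goes through.
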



\begin{Proposition}\label{C-vr-terms}
For any $\mD_\mu$-variation $g_t$ of an adapted metric $g$ on $(M;\mD_1,\ldots,\mD_k)$ with fixed linear connection $\bar\nabla=\nabla+\I$, we have
\begin{eqnarray}\label{E-dt-Q}
 \dt\sum\nolimits_{\,\nu} Q(\mD_\nu,g_t) \eq \<\delta Q_\mu,\, {B}_\mu\> - \Div X_\mu\,, \\
\label{E-dt-barQ}
 \dt\sum\nolimits_{\,\nu}\bar Q(\mD_\nu,g_t,\I) \eq\<{\delta_g\bar Q}_\mu,\,{B}_\mu\>\,,
\end{eqnarray}
where $(0,2)$-tensors ${\delta Q}_\mu$ on $\mD_\mu\times\mD_\mu$ and vector fields $X_\mu$ on $M$ are given~by
\begin{eqnarray*}
 &&\hskip-5mm 2 X_\mu=\<h_\mu, {B}_\mu\> -(\tr {B}_\mu^\sharp) H_\mu
 +\sum\nolimits_{\,\nu\ne\mu}\big(\<\tilde h_\nu, {B}_\mu\> -(\tr {B}_\mu^\sharp) \tilde H_\nu\big),\\
 &&\hskip-5mm {\delta Q}_\mu = -\Div{h}_\mu -{\cal K}_\mu^\flat -\tilde{H}_\mu^\flat\otimes\tilde{H}_\mu^\flat
 +\frac12\Upsilon_{\tilde h_\mu, \tilde h_\mu} +\frac12\Upsilon_{\tilde T_\mu, \tilde T_\mu} +\,2\,{\cal T}_\mu^\flat + (\Div H_\mu)\,g_\mu \\
 && +\sum\nolimits_{\,\nu\ne\mu}\big(-\Div\tilde{h}_\nu|_{\mD_\mu} - (P_\mu\tilde{\cal K}_\nu)^\flat -(P_\mu{H}_\nu)^\flat\otimes(P_\mu{H}_\nu)^\flat \\
 && +\,\frac12\Upsilon_{P_\mu h_\nu,P_\mu h_\nu} +\frac12\Upsilon_{P_\mu T_\nu,P_\mu T_\nu} +2\,(P_\mu\tilde{\cal T}_\nu)^\flat +(\Div\tilde H_\nu)\,g_\mu\big),
\end{eqnarray*}
and certain $(0,2)$-tensors ${\delta_g\bar Q}_\mu$ on $\mD_\mu\times\mD_\mu$ have long expressions $($see {\rm \cite{rz-3}} for $k=2)$.
%
If $\,\bar\nabla$ is statistical, then
tensors ${\delta_g\bar Q}_\mu$ on $\mD_\mu\times\mD_\mu$ in \eqref{E-dt-barQ} can be written explicitly by
\begin{eqnarray*}
 &&\hskip-5mm 2\,{\delta_g\bar Q}_\mu({E}_{\mu,a},{E}_{\mu,b}) =
 \<\I_{{\mu,a}}, \I_{{\mu,b}}\>_{\,|\mD_\mu^\bot} -\<\I_{{\mu,a}}{E}_{\mu,b},\,\tr_{\,\mu}^\bot\I\> -2\,\<(\tilde A_\mu)_{{E}_{\mu,a}},\,\I_{{\mu,b}}\> \\
 && +\,2\,\<(\tilde T^{\sharp}_\mu)_{{E}_{\mu,a}},\,\I_{{\mu,b}}\> +6\,\<T_\mu({E}_{\mu,b},\,\cdot),\, \I_{{\mu,a}}\> + 2\,\< h_\mu({E}_{\mu,b},\,\cdot),\, \I_{{\mu,a}}\> \\
 && -\,\<\I_{{\cal E}_{\mu,b}}{E}_{\mu,a}, H_\mu - \tilde H_\mu\> -\<\tr_{\,\mu}^\top\I, {E}_{\mu,a}\>\< \tilde H_\mu, {E}_{\mu,b}\> +\<\tr_{\,\mu}^\bot\I, {E}_{\mu,b}\> \<\tilde H_\mu, {E}_{\mu,a}\> \\
 && +\sum\nolimits_{\,\nu\ne\mu}\big(\<\I_{{\mu,a}}, \I_{{\mu,b}}\>_{\,|\mD_\nu} {-}\<\I_{{\mu,a}}{E}_{\mu,b}, \tr_{\,\nu}^\top\I\> {-}2\<(A_\nu)_{{E}_{\mu,a}}, \I_{{\mu,b}}\> {+}2\< (T^{\sharp}_\nu)_{{E}_{\mu,a}}, \I_{{\mu,b}}\> \\
 && +\,6\,\< \tilde T_\nu({E}_{\mu,b},\,\cdot),\, \I_{{\mu,a}}\> + 2\,\<\tilde h_\nu({E}_{\mu,b},\,\cdot),\, \I_{{\mu,a}}\>
 -\<\I_{{\mu,b}}{E}_{\mu,a}, \tilde H_\nu - H_\nu\>\\
 && -\,\<\tr_{\,\nu}^\bot\I, {E}_{\mu,a}\>\< H_\nu, {E}_{\mu,b}\> +\<\tr_{\,\nu}^\top\I, {E}_{\mu,b}\> \<H_\nu, {E}_{\mu,a}\>\big).
\end{eqnarray*}
\end{Proposition}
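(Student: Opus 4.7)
The strategy reduces the proposition to bookkeeping on top of the term-by-term variational identities already supplied in Lemmas~\ref{propvar1} and~\ref{P-dT-3}. The key observation is that a $\mD_\mu$-variation $g_t$, viewed from the pair $(\mD_\nu, \mD_\nu^\bot)$ with $\nu \ne \mu$, is a variation of $\mD_\nu^\bot$ only, because $\mD_\mu \subset \mD_\nu^\bot$ and $g_t = g$ on the remaining summands. This is precisely the setting in which the dual half of each lemma applies.

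To prove \eqref{E-dt-Q}, I would split $\sum_\nu Q(\mD_\nu, g_t)$ into the $\nu = \mu$ summand and $\sum_{\nu \ne \mu} Q(\mD_\nu, g_t)$. Differentiating the former using the six identities in the upper half of Lemma~\ref{propvar1}, and the latter using the six dual identities, I separate each resulting term into a piece of the form $\langle \cdot, B_\mu\rangle$ and a divergence. The former aggregate into the displayed tensor ${\delta Q}_\mu$: the first line of its formula collects the $\nu = \mu$ terms, and the second line the $\sum_{\nu \ne \mu}$ contribution, restricted to $\mD_\mu$ via the orthoprojector $P_\mu$. The four divergences $\Div\langle h_\mu, {B}_\mu\rangle$, $\Div((\tr {B}_\mu^\sharp) H_\mu)$ and their duals for $\nu \ne \mu$ then assemble into $-\Div X_\mu$ with $X_\mu$ as stated.

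For \eqref{E-dt-barQ} with a general linear connection, I would quote the $k = 2$ case from \cite{rz-3} (which identifies the analogue of ${\delta_g\bar Q}_\mu$ for the pair $(\mD_\mu, \mD_\mu^\bot)$) and apply the same splitting: summing the $k = 2$ identity over $\nu$ and using the factorisation of a $\mD_\mu$-variation above yields \eqref{E-dt-barQ}, with ${\delta_g\bar Q}_\mu$ obtained by aggregating the $\nu = \mu$ and projected $\nu \ne \mu$ contributions. In the statistical case the computation is done directly from Lemma~\ref{P-dT-3}: summing its nine identities over $\nu$, dualizing for $\nu \ne \mu$, and reading off the coefficient of ${B}_\mu({E}_{\mu,a}, {E}_{\mu,b})$ gives the displayed expression for $2\,{\delta_g\bar Q}_\mu({E}_{\mu,a}, {E}_{\mu,b})$; its first block (involving $\tilde A_\mu, \tilde T_\mu^\sharp, T_\mu, h_\mu$ and the mean curvatures $H_\mu, \tilde H_\mu$) corresponds to $\nu = \mu$, and the dual block to $\nu \ne \mu$.

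The main obstacle is purely combinatorial rather than conceptual: tracking the factors $\tfrac12$, $2$, $6$ and the signs across all terms, and keeping careful account of the fact that when the dual identities are applied for $\nu \ne \mu$ the roles of $\mD_\nu$ and $\mD_\nu^\bot$ are swapped, so that the frame $\{{E}_{\mu, a}\}$ of $\mD_\mu$ now sits inside the "$\mD_\nu^\bot$-frame" rather than the "$\mD_\nu$-frame". The per-term variational identities themselves are not reproved here; Lemma~\ref{propdeltaQforstatistical} below will supply the detailed derivation in the statistical setting, which at this step one merely assembles.
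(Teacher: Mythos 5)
Your proposal follows the paper's own proof essentially verbatim: the paper likewise obtains \eqref{E-dt-Q} and \eqref{E-dt-barQ} by summing the decomposition \eqref{E-Dk-Smix} over $\nu$, reading off the $\nu=\mu$ contributions from Lemma~\ref{propvar1} (resp.\ Lemma~\ref{P-dT-3} in the statistical case) and the $\nu\ne\mu$ contributions from the dual identities, and noting that the summation part of $X_\mu$, ${\delta Q}_\mu$ and ${\delta_g\bar Q}_\mu$ is dual to the $\mD_\mu$-part. Your observation that a $\mD_\mu$-variation is, from the viewpoint of $(\mD_\nu,\mD_\nu^\bot)$ with $\nu\ne\mu$, a variation of $\mD_\nu^\bot$ only is exactly the duality remark with which the paper closes its proof, so no further comparison is needed.
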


\begin{proof}
Equations \eqref{E-dt-Q} and \eqref{E-dt-barQ}
with $Q(\mD_\mu,g)$ and $\bar Q(\mD_\mu,g,\I)$ given by \eqref{E-func-Q} and \eqref{E-barQ} follow from \eqref{E-Dk-Smix},
and explicit forms of tensors ${\delta Q}_\mu$ on $\mD_\mu\times\mD_\mu$ and vector fields $X_\mu$ follow from Lemma~\ref{propvar1}.
If $\,\bar\nabla$ is statistical, then explicit forms of tensors ${\delta_g\bar Q}_\mu$ on $\mD_\mu\times\mD_\mu$
follow from Lemma~\ref{P-dT-3}.
Note that $X_\mu$, ${\delta Q}_\mu$ and ${\delta_g\bar Q}_\mu$ consist of two parts, the summation part (related to $\mD_\mu^\bot$)
is dual to the part related to~$\mD_\mu$.
\end{proof}


In the following theorem (based on Proposition~\ref{C-vr-terms}) we generalize results in \cite{rz-3} with $k=2$.

\begin{Theorem}\label{T-main01}
Let $g$ be an adapted metric and $\bar\nabla=\nabla+\I$ a
linear connection on a manifold $(M;\mD_1,\ldots,\mD_k)$.
Then $g$ is critical for \eqref{Eq-Smix-g} with respect to adapted variations of metric, preserving the volume of $\Omega$, if and only if
the following Euler-Lagrange equations \eqref{E-delta-g-J}
are satisfied for some $\lambda\in\RR$:
\begin{eqnarray}\label{ElmixDDvp-b}
 && {\delta Q}_\mu +{\delta_g\bar Q}_\mu +\Big(\,\overline{\rm S}_{\,\mD_1,\ldots,\mD_k} -\frac12\,\Div
 \sum\nolimits_{\,\nu}
 \big(\frac12\,P_\nu\tr_{\,\nu}^\bot(\I -\I^*) \nonumber \\
 && +\,\frac12\,P_\nu^\bot\tr_{\,\nu}^\top(\I -\I^*) +H_{\,\nu}+\tilde H_{\,\nu}\big) +\lambda\Big)\,g_\mu=0,
 \quad \mu=1,\ldots, k,
\end{eqnarray}
where ${\delta Q}_\mu$ and ${\delta_g\bar Q}_\mu$ are defined in Proposition~\ref{C-vr-terms}.
\end{Theorem}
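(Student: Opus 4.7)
The plan is to take the first variation of $\bar J_\mD$ along an arbitrary adapted variation, convert the integrand via Proposition~\ref{P-decomp2} into a form where Proposition~\ref{C-vr-terms} applies directly, and then extract the Euler-Lagrange equation through a Lagrange multiplier argument for the volume constraint.

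First, I would invoke Proposition~\ref{P-decomp2} to replace $2\,\overline{\rm S}_{\,\mD_1,\ldots,\mD_k}$ in the integrand by $\Div Y+\sum_{\nu}(Q(\mD_\nu,g)+\bar Q(\mD_\nu,g,\I))$, with $Y$ as in \eqref{E-prop-X}. The contribution of $\Div Y$ becomes a boundary integral by Stokes's theorem, $\int_\Omega \Div Y\,{\rm d}\vol_g=\int_{\partial\Omega}\iota_Y\,{\rm d}\vol_g$. Since $B_t$ is supported away from $\partial\Omega$, the metric $g_t$, the vector field $Y(g_t)$, and $\,{\rm d}\vol_{g_t}$ all coincide with their $t=0$ values near $\partial\Omega$, so this boundary integral is independent of $t$ and drops out upon differentiation; this is precisely what sidesteps computing the formidable $\dt(\Div Y)$ directly.

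Next, I would decompose an arbitrary adapted variation as $B=\sum_\mu B_\mu$ with each $B_\mu$ a $\mD_\mu$-variation, and apply Proposition~\ref{C-vr-terms} componentwise:
\[
 \dt\sum\nolimits_\nu Q(\mD_\nu,g_t)=\sum\nolimits_\mu\big(\<\delta Q_\mu, B_\mu\>-\Div X_\mu\big),\quad
 \dt\sum\nolimits_\nu\bar Q(\mD_\nu,g_t,\I)=\sum\nolimits_\mu\<\delta_g\bar Q_\mu,B_\mu\>.
\]
Each $X_\mu$ is linear in $B_\mu$ and hence compactly supported in $\Omega$, so $\int_\Omega \Div X_\mu\,{\rm d}\vol_g=0$. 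Combining this with $\dt({\rm d}\vol_g)=\tfrac12\<B,g\>\,{\rm d}\vol_g$ from \eqref{E-dotvolg}, the identity $\sum_\nu(Q+\bar Q)=2\overline{\rm S}_{\,\mD_1,\ldots,\mD_k}-\Div Y$, and the pointwise splitting $\<B,g\>=\sum_\mu\<B_\mu,g_\mu\>$ (valid because $B_\mu$ is supported on $\mD_\mu\times\mD_\mu$ and $g$ is adapted), I would assemble
\[
 2\,\dt\bar J_\mD(g_t,\I)_{|\,t=0}=\sum\nolimits_\mu\int_\Omega\big\<\delta Q_\mu+\delta_g\bar Q_\mu+\big(\overline{\rm S}_{\,\mD_1,\ldots,\mD_k}-\tfrac12\Div Y\big)g_\mu,\ B_\mu\big\>\,{\rm d}\vol_g.
\]

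Finally, the volume-preserving constraint $\int_\Omega\<B,g\>\,{\rm d}\vol_g=0$ together with the standard Lagrange multiplier principle forces the existence of a single constant $\lambda\in\RR$ such that $\delta Q_\mu+\delta_g\bar Q_\mu+(\overline{\rm S}_{\,\mD_1,\ldots,\mD_k}-\tfrac12\Div Y+\lambda)\,g_\mu=0$ for each $\mu=1,\ldots,k$; substituting \eqref{E-prop-X} for $Y$ gives \eqref{ElmixDDvp-b}. The main technical obstacle is the clean bookkeeping of the two divergence contributions: verifying that the $\Div Y$ term yields a $t$-independent boundary contribution (requiring a careful look at support properties of $B_t$) and that each $X_\mu$ is genuinely compactly supported in $\Omega$ (which follows from the explicit formula in Proposition~\ref{C-vr-terms}), so that only the bulk terms $\delta Q_\mu$, $\delta_g\bar Q_\mu$, and the scalar $\overline{\rm S}_{\,\mD_1,\ldots,\mD_k}-\tfrac12\Div Y$ survive to form the Euler-Lagrange equation.
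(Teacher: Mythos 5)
Your proposal is correct and follows essentially the same route as the paper: replace $2\,\overline{\rm S}_{\,\mD_1,\ldots,\mD_k}$ via Proposition~\ref{P-decomp2}, discard the $\Div Y$ contribution (the paper writes its $t$-derivative as the integral of the divergence of a compactly supported field, which is equivalent to your boundary-integral argument), apply Proposition~\ref{C-vr-terms} together with \eqref{E-dotvolg} to get the first variation, and finish with the Lagrange multiplier for the volume constraint. The only cosmetic difference is that the paper treats one $\mD_\mu$-variation at a time rather than summing the components of a general adapted variation at once.
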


\begin{proof}
Let a $\mD_\mu$-variation $g_t$ of $g$ (for some $\mu\ge 1$) be compactly supported in $\Omega\subset M$.
For
a $t$-dependent vector field $Y$ given in \eqref{E-prop-X},
by \eqref{E-dotvolg} and the Divergence Theorem, we obtain
 $\frac{d}{dt}\int_\Omega (\Div Y)\,{\rm d}\vol_g = \int_\Omega \Div\big(\dt Y+\frac12\,(\tr_g {B}) Y\big)\,{\rm d}\vol_g = 0$.
Thus, for $Q(\mD_\mu,g_t)$ and $\bar Q(\mD_\mu,g_t,\I)$ given in \eqref{E-func-Q} and \eqref{E-barQ}, using Proposition~\ref{P-decomp2}, we~obtain
\[
 {\rm\frac{d}{dt}}\,\bar J_{\mD}(g_t,\I)
  = \frac12\,{\rm\frac{d}{dt}}\int_{\Omega}
 \sum\nolimits_{\,\nu=1}^{\,k}\big(\bar Q(\mD_\nu,g_t,\I) + Q(\mD_\nu,g_t)\big)\,{\rm d}\vol_{g_t} . 
\]
Therefore,
\begin{eqnarray*}
 && {\rm\frac{d}{dt}}\int_{\Omega} \sum\nolimits_{\,\nu=1}^{\,k}\big(\bar Q(\mD_\nu,g_t,\I)+Q(\mD_\nu,g_t)\big)\,{\rm d}\vol_{g_t}  \\
 && =\int_{\Omega} \<{\delta_g\bar Q}_\mu+{\delta Q}_\mu,\ {B}_\mu\>\,{\rm d}\vol_{g_t}
 +\int_{\Omega} \sum\nolimits_{\,\nu=1}^{\,k}\big(\bar Q(\mD_\nu,g_t,\I)+Q(\mD_\nu,g_t)\big)\,\dt({\rm d}\vol_{g_t})\,.
\end{eqnarray*}
From \eqref{E-Q1Q2-gen}, \eqref{E-prop-X}, \eqref{E-dotvolg}, \eqref{E-dt-Q} and \eqref{E-dt-barQ}, we obtain
\begin{eqnarray*}
 {\rm\frac{d}{dt}}\,\bar J_{\mD}(g_t,\I)_{|\,t=0} &=& \frac12\int_{\Omega} \big\<{\delta Q}_\mu +{\delta_g\bar Q}_\mu {+}\big(\,\overline{\rm S}_{\,\mD_1,\ldots,\mD_k}
 {-} \frac12\,\Div \sum\nolimits_{\,\nu}\big(\frac12\,P_\nu\tr_{\,\nu}^\bot(\I -\I^*) \\
 &+&\frac12\,P_\nu^\bot\tr_{\,\nu}^\top(\I -\I^*) +H_{\,\nu}+\tilde H_{\,\nu}\big) \big)\,g_\mu,\ {B}_\mu\big\>\,{\rm d}\vol_g.
\end{eqnarray*}
If $g$ is critical for $\bar J_{\mD}$ (with fixed $\I$) for $\mD_\mu$-variations,
then the above integral is zero for any symmetric $(0,2)$-tensor ${B}_\mu$.
This yields the $\mD_\mu$-component of the Euler-Lagrange equation
\begin{equation}\label{ElmixDD-b}
 {\delta Q}_\mu +{\delta_g\bar Q}_\mu +\big(\,\overline{\rm S}_{\,\mD_1,\ldots,\mD_k} -\frac12\,\Div \sum\nolimits_{\,\nu}\big(\frac12\,(P_\nu\tr_{\,\nu}^\bot +P_\nu^\bot\tr_{\,\nu}^\top)(\I -\I^*) +H_{\,\nu}+\tilde H_{\,\nu}\big) \big)\,g_\mu=0\,.
\end{equation}
According to \eqref{E-delta-g-J}, the Euler-Lagrange equation for \eqref{Eq-Smix-g} (with fixed $\I$) for adapted
variations of $g$ preser\-ving the volume of $\Omega$
is \eqref{ElmixDDvp-b} instead of \eqref{ElmixDD-b}.
\end{proof}

\begin{Remark}\rm
One can present \eqref{ElmixDDvp-b} in the form of \eqref{E-geom} given by its restrictions on $\mD_\mu$,
\begin{equation}\label{E-main-0ij-kk}
 \overline\Ric_{\,\mD\,|\,\mD_\mu\times\mD_\mu} = - {\delta Q}_\mu - {\delta_g\bar Q}_\mu + \rho_\mu\,g_\mu, \quad \mu=1,\ldots,k\,,
\end{equation}
(see \cite{rz-3} for $k=2$), where $\rho_\mu$ are defined in \eqref{E-mu-k} below.
Indeed,
\begin{equation}\label{Eq-2-Ric}
 \overline\Ric_{\,\mD\,|\,\mD_\mu\times\mD_\mu} = \Ric_{\,\mD\,|\,\mD_\mu\times\mD_\mu} - {\delta_g\bar Q}_\mu, \quad \mu=1,\ldots,k\,,
\end{equation}
and it was shown in \cite{r-EH-k} that
\begin{equation}\label{E-main-0ij-k}
 \Ric_{\,\mD\,|\,\mD_\mu\times\mD_\mu} = -{\delta Q}_\mu +\rho_\mu \,g_\mu, \quad \mu=1,\ldots,k\,.
\end{equation}
that corresponds to the Euler-Lagrange equations \eqref{ElmixDDvp-b} for $\,\I=0$:
\begin{equation}\label{ElmixDDvp}
 {\delta Q}_\mu =-\big(\,{\rm S}_{\,\mD_1,\ldots,\mD_k} -\frac12\,\Div\sum\nolimits_{\,\nu=1}^{\,k}(H_\nu + \tilde{H}_\nu) +\lambda\big)\,g_\mu,
 \quad \mu=1,\ldots, k\,,
\end{equation}
and $(\rho_1,\ldots,\rho_k)$ in \eqref{E-main-0ij-k} for $n=\dim M>2$ are given by
\begin{equation}\label{E-mu-k}
 \rho_{\,\nu}=-\frac1{2n-4}\,\big(\sum\nolimits_{\,\mu}\,(a_{\,\nu}-a_{\mu})\,n_{\mu}-2\,a_{\,\nu}\big),
\end{equation}
with coefficients
 $a_\mu= \tr_g(\sum\nolimits_{\,\nu}{\delta Q}_\nu) -2\,{\delta Q}_\mu$.
From \eqref{Eq-2-Ric} and \eqref{E-main-0ij-k} the system \eqref{E-main-0ij-kk} follows.
\end{Remark}

\begin{Example}[Case $k=2$]\rm
For $(M,g;\mD,\mD^\bot)$ with a statistical connection,
the tensor $\overline\Ric_{\,\mD}$ in~\eqref{E-geom} is defined
by its restrictions on complementary subbundles $\mD$ and $\mD^\bot$ of $TM$,
\begin{eqnarray*}
 \overline\Ric_{\,\mD\,|\,\mD\times\mD} =
 \Div{h} +{\cal K}^\flat +\tilde{H}^\flat\otimes\tilde{H}^\flat -\frac12\Upsilon_{\tilde h, \tilde h} -\frac12\Upsilon_{\tilde T,\tilde T}
 -2\,{\cal T}^\flat - {\delta_g\bar Q}_1 + (\rho_1 - \Div H)\,g^\top, \\
 \overline\Ric_{\,\mD\,|\,\mD^\bot\times\mD^\bot} =
 \Div\tilde{h} +\tilde{\cal K}^\flat +{H}^\flat\otimes{H}^\flat -\frac12\Upsilon_{h,h} -\frac12\Upsilon_{T,T} -2\,\tilde{\cal T}^\flat
 - {\delta_g\bar Q}_2 + (\rho_2 -\Div\tilde H)\,g^\bot ,
\end{eqnarray*}
see \cite{rz-connections,rz-3}, where
 $\rho_1=-\frac{n_1-1}{n-2}\,\Div(\tilde H-{H})$,
 $\rho_2=\frac{n_2-1}{n-2}\,\Div(\tilde H-{H})$,
and $n=\dim M>2$. If~$n=2$,
then $\rho_1=\rho_2=0$. 
The (0,2)-tensors ${\delta_g\bar Q}_1: \mD\times\mD\to\RR$ and ${\delta_g\bar Q}_2: \mD^\bot\times\mD^\bot\to\RR$ are given
(using adapted frame, $E_a\in\mD$ and ${\cal E}_i\in\mD^\bot$) by
\begin{eqnarray*}
 &&\hskip-4mm {\delta_g\bar Q}_1(E_a,{E}_b) =
 \<\I_{{a}}, \I_{{b}}\>_{\,|\,\mD^\bot} -\<\I_{{a}}{E}_{b},\,\tr_{\,\mD^\bot}\I\> -2\,\<\tilde A_{{E}_{a}},\,\I_{{b}}\> \\
 && +\,2\,\< \tilde T^{\sharp}_{{E}_{a}},\, \I_{{b}}\> + 6\,\<T({E}_{b},\,\cdot),\, \I_{{a}}\> + 2\,\< h({E}_{b},\,\cdot),\, \I_{{a}}\> \\
 && -\,\<\I_{{b}}{E}_{a}, H - \tilde H\> -\<\tr_{\,\mD}\I, {E}_{a}\>\< \tilde H, {E}_{b}\> +\<\tr_{\,\mD^\bot}\I, {E}_{b}\> \<\tilde H, {E}_{a}\>\,,\\
 &&\hskip-4mm {\delta_g\bar Q}_2({\cal E}_i,{\cal E}_j) =
 \<\I_{{\cal E}_i}, \I_{{\cal E}_j}\>_{\,|\,\mD} -\<\I_{{\cal E}_i}{\cal E}_j,\,\tr_{\,\mD}\I\> -2\,\<A_{{\cal E}_i},\, \I_{{\cal E}_j}\> \\
 && +\,2\,\< T^{\sharp}_{{\cal E}_i},\, \I_{{\cal E}_j}\> + 6\,\< \tilde T({\cal E}_j,\,\cdot),\, \I_{{\cal E}_i}\> + 2\,\< \tilde h({\cal E}_j,\,\cdot),\, \I_{{\cal E}_i}\> \\
 && +\,\<\I_{{\cal E}_j}{\cal E}_i, \tilde H - H\> -\<\tr_{\,\mD^\bot}\I, {\cal E}_i\>\< H, {\cal E}_j\> +\<\tr_{\,\mD}\I, {\cal E}_j\> \<H, {\cal E}_i\>\,.
\end{eqnarray*}
\end{Example}

\section{Variations with respect to contorsion tensor}
\label{sec:contorsion}

Here, we consider action \eqref{Eq-Smix-g} with fixed metric $g$, as a functional of $\I$.
The Euler-Lagrange equation for
\eqref{Eq-Smix-g}
with fixed adapted metric,
means that the partial gradient vanishes,
\begin{equation}\label{E-delta-I-J}
 \delta_\I\bar J_\mD=0\,,
\end{equation}
where
 ${\rm\frac{d}{dt}}\,\bar J_\mD(g,\I_t)_{|\,t=0} =\int_{\Omega}\<\delta_\I \bar J_\mD, \overset{\centerdot}\I\>\,{\rm d}\vol_g$
for any variation $\I_t$ with $\overset{\centerdot}\I =\dt \I_{t_{|\,t=0}}$. In what follows, we consider particular components of \eqref{E-delta-I-J}, defined by the distributions.
We adapt notation from \cite{rz-3} to the case of several distributions.
Greek letters $\mu, \rho, \lambda, \nu$ are used for indices of pairwise orthogonal distributions spanning the tangent bundle, $TM = \bigoplus_{\mu} \mD_\mu$;
$\delta_{a,b} =1$ if $a=b$ and 0 otherwise.

\begin{Proposition}\label{P-03}
A contorsion tensor $\I$ is critical for the action \eqref{Eq-Smix-g} with fixed adapted metric $g$ on $(M;\mD_1,\ldots,\mD_k)$
if and only if the following Euler-Lagrange equations 
\eqref{E-delta-I-J} hold:
\begin{subequations}
\begin{eqnarray}
\label{ELI1}
&& \<\tr_\mu^\bot{\mathfrak T}^*-\tH_\mu, {E}_{\mu, c}\>\delta_{a,b} +\<\tr_\mu^\bot{\mathfrak T}+\tH_\mu, {E}_{\mu, b}\>\delta_{a,c}=0\,, \\
&& \label{ELI2}
\<\tr_\mu^\bot{\mathfrak T}^* + H_\mu, E_{\rho, i}\>\delta_{a,b}
-\< (h_\mu - T_\mu) ({E}_{\mu, a}, {E}_{\mu, b} ), E_{\rho, i}\> -\<\I_{{\rho, i}}{E}_{\mu, a}, {E}_{\mu, b}\> = 0\,, \\
&& \label{ELI3}
\<\tr_\mu^\bot{\mathfrak T} - H_\mu, E_{\rho, i}\>\delta_{a,b}+ \< (h_\mu + T_\mu)( {E}_{\mu, b}, {E}_{\mu, a}), {E}_{\rho,i}\>
-\<\I_{{\rho, i}}{E}_{\mu, b}, {E}_{\mu, a}\> = 0\,, \\
&& \label{ELI4}
 \<2\,T_\mu ({E}_{\mu, a}, {E}_{\mu, b} ), E_{\rho,i}\> +\<\I_{{\mu, a}}{E}_{\mu, b} + \I^*_{{\mu, b}}{E}_{\mu, a}, {E}_{\rho, i}\> =0\,, \\
&& \label{ELI5}
 \<2\,{\tilde T}_\mu ({E}_{\rho, j}, {E}_{\xi, l} ), E_{\mu, a }\>
 +\,\<({\tilde h}_\xi + {\tilde T}_\xi )( {E}_{\rho, j}, {E}_{\mu, a} ), E_{\xi,l}\>
 + 2\<\I_{{\xi, l}}{E}_{\mu, a}, {E}_{\rho, j}\>\nonumber \\
&&
 -\,\< ({\tilde h}_\rho + {\tilde T}_\rho )( {E}_{\xi, l}, {E}_{\mu, a} ), E_{\rho, j} \>
 +2\<\I_{{\rho, j}}{E}_{\xi, l}, {E}_{\mu, a}\> = 0\,,
\end{eqnarray}
\end{subequations}
for all
$\mu,\rho,\xi\in\{1, \ldots k \}$, such that $\rho\ne\mu$, $\xi\notin\{\mu, \rho\}$,
and for all $a,b,c\in\{1,\ldots,n_\mu \}$, $i,j\in\{1,\ldots,n_\rho \}$ and $l \in\{1,\ldots,n_\xi \}$.
\end{Proposition}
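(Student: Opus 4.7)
The plan is to reduce the variation of $\bar J_\mD(g,\I)$ with respect to $\I$ to a variation of $\sum_\mu \bar Q(\mD_\mu,g,\I)$, and then exploit the fact that an infinitesimal contorsion $\dot\I = \partial_t \I_{t\,|\,t=0}$ is an arbitrary compactly supported $(1,2)$-tensor whose components with respect to the adapted frame decouple across triples of distributions. First I would invoke Proposition~\ref{P-decomp2} and write
\[
 2\,\overline{\rm S}_{\,\mD_1,\ldots,\mD_k} = \Div Y + \sum\nolimits_{\,\mu}\big(Q(\mD_\mu,g)+\bar Q(\mD_\mu,g,\I)\big).
\]
Since $g$ is fixed, the tensors $Q(\mD_\mu,g)$ contribute nothing to $\partial_t$, and the $\I$-dependent part of $Y$ in \eqref{E-prop-X} varies inside $\Omega$, so $\int_\Omega \dt(\Div Y)\,{\rm d}\vol_g=\int_\Omega \Div(\dt Y)\,{\rm d}\vol_g=0$ by the Divergence Theorem. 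Hence
\[
 \tfrac{d}{dt}\bar J_\mD(g,\I_t)_{|\,t=0} = \tfrac12\int_\Omega \sum\nolimits_{\,\mu}\dt\bar Q(\mD_\mu,g,\I_t)_{|\,t=0}\,{\rm d}\vol_g.
\]

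Next I would differentiate each summand of \eqref{E-barQ} in $\I$. Each of the six terms is either bilinear in $\I,\I^*,\I^\wedge,\I^{*\wedge}$ or linear in those tensors paired against a fixed geometric object ($H_\mu,\tilde H_\mu,A_\mu,\tilde A_\mu,T_\mu^\sharp,\tilde T_\mu^\sharp$). Using symmetry of the inner product $\<\cdot,\cdot\>$, the dual operations $\I\mapsto\I^*$ and $\I\mapsto\I^\wedge$ can be transferred onto the coefficient side, so that the whole variation is re-expressed as $\<M_\mu,\dot\I\>$ for an explicit $(1,2)$-tensor $M_\mu$ supported on index patterns involving $\mD_\mu\cup\mD_\mu^\bot$. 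Summing over $\mu$ and using $\mD_\mu^\bot=\bigoplus_{\rho\ne\mu}\mD_\rho$, the fundamental lemma of the calculus of variations yields vanishing of each independent component of $\dot\I$.

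The decomposition into the five equations \eqref{ELI1}--\eqref{ELI5} is then dictated by the type of triple $(\mu,\rho,\xi)$ indexing the arguments and the value of $\dot\I$: (a) all three indices in the same $\mD_\mu$ (which, by symmetrizing the two lower indices and contracting as in the trace terms, produces the Kronecker deltas); (b) and (c) arise from configurations $(\mD_\mu,\mD_\mu,\mD_\rho^*)$ and $(\mD_\mu,\mD_\rho,\mD_\mu^*)$ (equivalently, the dual positions) for $\rho\ne\mu$, which break the symmetry between $h_\mu-T_\mu$ and $h_\mu+T_\mu$; (d) comes from $(\mD_\mu,\mD_\mu,\mD_\rho^*)$-type components feeling the full symmetric plus antisymmetric pairing to $T_\mu$; and (e) from the case of three pairwise distinct distributions. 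Specializing the $k=2$ formulas from \cite{rz-3} to the pair $(\mD_\mu,\mD_\mu^\bot)$ provides the building blocks, and summing over $\mu$ together with the identity $\tr_\mu^\bot\I=\sum_{\nu\ne\mu}\tr_\nu^\top\I$ assembles them into \eqref{ELI1}--\eqref{ELI5}.

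The main obstacle is purely combinatorial: keeping track of which of the six summands in \eqref{E-barQ} contributes to each $(\mu,\rho,\xi)$-pattern, and verifying that the contributions of the $\mu$-terms and of the cross $\mu\ne\nu$-terms indeed recombine into the compact symmetric form displayed in \eqref{ELI1}--\eqref{ELI5} without leftover pieces. In particular, the distinction between $\tr_\mu^\top$ and $\tr_\mu^\bot$ and the interplay of $\I$ with $\I^*$ and $\I^\wedge$ must be handled uniformly; once the bookkeeping is organized by $(\mu,\rho,\xi)$-type, the arbitrariness of each independent component of $\dot\I$ yields the five stated equations.
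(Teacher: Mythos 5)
Your proposal follows essentially the same route as the paper: reduce the variation to $\frac12\int_\Omega\sum_\mu\dt\bar Q(\mD_\mu,g,\I_t)\,{\rm d}\vol_g$ via Proposition~\ref{P-decomp2} and the divergence theorem (equivalently, invoke the $k=2$ variational formula of \cite[Theorem~2]{rz-3} for each pair $(\mD_\mu,\mD_\mu^\bot)$ and sum using \eqref{E-Dk-Smix}), then collect the coefficients of the independent frame components of $\overset{\centerdot}\I$ according to the $(\mu,\rho,\xi)$-type of the triple and apply the fundamental lemma. The combinatorial bookkeeping you single out as the main obstacle --- matching the components of $\overset{\centerdot}\I$ that reappear in the sums over the other distributions and recombining their coefficients via $\tr_\mu^\bot\I=\sum_{\nu\ne\mu}\tr_\nu^\top\I$ and the identities relating $\tilde h_\rho,\tilde T_\rho$ restricted to $\mD_\mu\times\mD_\mu$ to $h_\mu,T_\mu$ --- is precisely what the paper's proof carries out, so your plan is sound and coincides with the published argument.
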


\begin{proof}
 Using \eqref{E-Dk-Smix} and
the formula for two complementary distributions,
 see \cite[Theorem~2]{rz-3},
we~get for $k>2$ distributions the following:
\begin{eqnarray} \label{dtSD1DkI}
&&\quad 2\,{\frac{\rm d}{\rm dt}\int_M \bar{\rm S}_{\mD_1, \ldots \mD_k }(\I_t)\,{\rm d} \vol_g}\,|_{\,t=0}
=\frac12\int_M \sum\nolimits_{\mu } \sum\Big\{\<\overset{\centerdot}\I_{{\mu,a}}{E}_{\mu, b},{E}_{\mu, c}\>\times\nonumber\\
&&\hskip-1mm \times\big(\<\tr_\mu^\bot{\mathfrak T}^*-\tH_\mu, {E}_{\mu, c}\>\delta_{a,b}
+\<\tr_\mu^\bot{\mathfrak T}+\tH_\mu, {E}_{\mu, b}\>\delta_{a,c}
\big)\nonumber\\
&&\hskip-1mm +\,\<\overset{\centerdot}\I_{{\mu, a}}{E}_{\mu, b}, {\cal E}_{\mu, i}\>\big(\<\tr_\mu^\bot{\mathfrak T}^*
+ H_\mu, {\cal E}_{\mu, i}\>\delta_{a,b}
-\<({A}_{\mu, i}{-}{T}^\sharp_{\mu, i}){E}_{\mu, a}, {E}_{\mu, b}\> - \<\I_{\mu, i}{E}_{\mu, a}, {E}_{\mu, b}\> \big)\nonumber\\
&&\hskip-1mm +\,\<\overset{\centerdot}\I_{{\mu, a}}{\cal E}_{\mu, i}, {E}_{\mu, b}\>\big(\<\tr_\mu^\bot{\mathfrak T}{-} H_\mu, {\cal E}_{\mu, i}\>\delta_{a,b}
+ \<({A}_{\mu, i}+{T}^\sharp_{\mu, i}){E}_{\mu, b}, {E}_{\mu, a}\> - \<\I_{\mu, i}{E}_{\mu, b}, {E}_{\mu, a}\>\big)\nonumber\\
&&\hskip-1mm +\,\<\overset{\centerdot}\I_{{\mu, a}}{\cal E}_{\mu, i}, {\cal E}_{\mu, j}\> \big(\<(\tilde{A}_{\mu, a}{-} \tilde{T}^\sharp_{\mu, a}){\cal E}_{\mu, i}, {\cal E}_{\mu, j}\>
{-}\<(\tilde{A}_{\mu, a}{+} \tilde{T}^\sharp_{\mu, a}){\cal E}_{\mu, i}, {\cal E}_{\mu, j}\>
{-} \<\I_{\mu, i}{\cal E}_{\mu, j}{+} \I^*_{\mu, j}{\cal E}_{\mu, i}, {E}_{\mu, a}\> \big)\nonumber\\
&&\hskip-1mm +\,\<\overset{\centerdot}\I_{{\cal E}_{\mu, i}}{\cal E}_{\mu, j}, {\cal E}_{\mu, l}\>\big(\<\tr_\mu^\top{\mathfrak T}^* - H_\mu, {\cal E}_{\mu, l}\>\delta_{i,j}
+\<\tr_\mu^\top{\mathfrak T} +H_\mu, {\cal E}_{\mu, j}\>\delta_{i,l}
\big)\nonumber\\
&&\hskip-1mm +\,\<\overset{\centerdot}\I_{{\cal E}_{\mu, i}}{\cal E}_{\mu, j}, {E}_{\mu,a}\>\big(\<\tr_\mu^\top{\mathfrak T}^*
+\tH_\mu, {E}_{\mu, a}\>\delta_{i,j}
-\<(\tilde{A}_{\mu, a}+\tilde{T}^\sharp_{\mu, a}){\cal E}_{\mu, j}, {\cal E}_{\mu, i}\> -\<\I_{\mu, a}{\cal E}_{\mu, i}, {\cal E}_{\mu, j}\> \big) \nonumber\\
&&\hskip-1mm +\,\<\overset{\centerdot}\I_{{\cal E}_{\mu, i}}{E}_{\mu, a}, {\cal E}_{\mu,j}\>\big(\<\tr_\mu^\top{\mathfrak T}-\tH_\mu, {E}_{\mu, a}\>\delta_{i,j}
+\<(\tilde{A}_{\mu, a} +\tilde{T}^\sharp_{\mu, a}){\cal E}_{\mu, j}, {\cal E}_{\mu, i}\> - \<\I_{\mu, a}{\cal E}_{\mu, j}, {\cal E}_{\mu, i}\> \big) \nonumber\\
\nonumber
&& \hskip-1mm +\,\<\overset{\centerdot}\I_{{\cal E}_{\mu, i}}{E}_{\mu, a}, {E}_{\mu, b}\> \big(\<({A}_{\mu, i} - {T}^\sharp_{\mu, i}){E}_{\mu, a}, {E}_{\mu, b}\>
-\<({A}_{\mu, i} + {T}^\sharp_{\mu, i}){E}_{\mu, a}, {E}_{\mu, b}\> \\
&&\hskip-1mm -\,\<\I_{\mu, a}{E}_{\mu, b} + \I^*_{\mu, b}{E}_{\mu, a}, {\cal E}_{\mu, i}\> \big) \Big\}\,{\rm d}\vol_g\,,
\end{eqnarray}
where we used notation ${A}_{\mu, i} = ({A}_{\mu})_{ {\cal E}_{\mu , i} }$ and $\tilde{A}_{\mu, a} = (\tilde{A}_{\mu})_{E_{\mu, a}}$ etc.
In \eqref{dtSD1DkI}, terms with the same coefficients of tensor $\overset{\centerdot}\I$ appear in different forms in sum over $\mu$, e.g., term $\<\overset{\centerdot}\I_{{\cal E}_{1, i}}{E}_{1, a}, {E}_{1, b}\>$, where ${\cal E}_{1, i} \in \mD_2$ coincides with some term $\<\overset{\centerdot}\I_{{2, a}}{\cal E}_{2, i}, {\cal E}_{2, j}\>$ and some terms
$\<\overset{\centerdot}\I_{{\cal E}_{\mu, l}}{\cal E}_{\mu, i}, {\cal E}_{\mu, j}\>$ for $\mu \notin \{1,2\}$.
%
To~relate the indices of various elements $\{E_{\mu, a}\}$ and $\{{\cal E}_{\nu, i}\}$ of the whole frame,
let $\iota(\nu,\mu,a)$ be such that $E_{\mu, a} = {\cal E}_{\nu,\,\iota(\nu,\mu,a)}$.
Since $\overset{\centerdot}\I$ is arbitrary, the equality $\frac{\rm d}{\rm dt}\,\bar J_{\mD}(g,\I_t)\,|_{\,t=0} =0$ is valid for all $\I_t$ if and only if all coefficients of terms with $\overset{\centerdot}\I$ in \eqref{dtSD1DkI} vanish.
For fixed $\mu$ and $a,b,c \in \{1, \ldots, n_\mu \}$, we consider the term of \eqref{dtSD1DkI} with $ \<\overset{\centerdot}\I_{{\mu,a}}{E}_{\mu, b},{E}_{\mu, c}\>$, which comes from one term with $\<\overset{\centerdot}\I_{{\mu,a}}{E}_{\mu, b},{E}_{\mu, c}\>$ and $k-1$ terms with $\<\overset{\centerdot}\I_{{\cal E}_{\nu, \iota(\nu,\mu,a)}}{\cal E}_{\nu, \iota(\nu,\mu,b)}, {\cal E}_{\nu, \iota(\nu,\mu,c)}\>$ for $\mD_\nu$ with $\nu \ne \mu$:
\begin{eqnarray}\label{ELI1term}
&& \<\overset{\centerdot}\I_{E_{\mu,a}}{E}_{\mu, b},{E}_{\mu, c}\>\,
\big(\<\tr_\mu^\bot{\mathfrak T}^*-\tH_\mu, {E}_{\mu, c}\>\delta_{a,b}
+\<\tr_\mu^\bot{\mathfrak T}+\tH_\mu, {E}_{\mu, b}\>\delta_{a,c}
\nonumber\\
&& +\sum\nolimits_{\nu\ne\mu} \big(\<\tr_\nu^\top{\mathfrak T}^* - H_\nu, {E}_{\mu, c}\>\delta_{a,b}
+\<\tr_\nu^\top{\mathfrak T} +H_\nu, {E}_{\mu, b}\>\delta_{a,c}
\big)\big).
\end{eqnarray}
On the other hand, we have
\begin{eqnarray}\label{ELI1termaux}
&& \<\tr_\mu^\bot{\mathfrak T}^*-\tH_\mu, {E}_{\mu, c}\>\delta_{a,b}
+\<\tr_\mu^\bot{\mathfrak T}+\tH_\mu, {E}_{\mu, b}\>\delta_{a,b}
\nonumber \\
&& = \sum\nolimits_{\nu\ne\mu} \big(\<\tr_\nu^\top{\mathfrak T}^* - H_\nu, {E}_{\mu, c}\>\delta_{a,b}
+\<\tr_\nu^\top{\mathfrak T} +H_\nu, {E}_{\mu, b}\>\delta_{a,c}
\big).
\end{eqnarray}
If $\I$ is a critical point of \eqref{Eq-Smix-g} with fixed metric,
then the coefficient of $\<\overset{\centerdot}\I_{{\mu,a}}{E}_{\mu, b},{E}_{\mu, c}\>$ in \eqref{dtSD1DkI}, given in \eqref{ELI1term}, is zero for every $a,b,c$, so by \eqref{ELI1term} and \eqref{ELI1termaux} the first Euler-Lagrange equation \eqref{ELI1} follows.

Let $\mu \ne \rho$,
fix $a,b \in \{ 1, \ldots, n_\mu \}$ and $i \in \{ 1, \ldots, n_\rho \}$. Then we get the following term in~\eqref{dtSD1DkI}, which comes from one term with $\<\overset{\centerdot}\I_{{\mu, a}}{E}_{\mu, b}, {\cal E}_{\mu, \iota(\mu, \rho, i )}\>$, one term with $\<\overset{\centerdot}\I_{{\cal E}_{\rho, \iota(\rho, \mu, a)}}{\cal E}_{\rho, \iota(\rho, \mu, b) }, E_{\rho, i}\>$ and, if $k\ge 3$, $(k-2)$ terms with $\<\overset{\centerdot}\I_{{\cal E}_{\nu, \iota(\nu, \mu,a) }}{\cal E}_{\nu, \iota(\nu, \mu,b) }, {\cal E}_{\nu, \iota(\nu, \rho,i) }\>$:
\begin{eqnarray} \label{ELI2termaux}
&& \<\overset{\centerdot}\I_{{\mu, a}}{E}_{\mu, b}, E_{\rho, i}\> \big(\<\tr_\mu^\bot{\mathfrak T}^* + H_\mu, E_{\rho, i}\>\delta_{a,b}
 {-}\< (h_\mu - T_\mu) ({E}_{\mu, a}, {E}_{\mu, b} ), E_{\rho, i}\>
 {-}\<\I_{{\rho, i}}{E}_{\mu, a}, {E}_{\mu, b}\>\nonumber \\
&&  +\,\big(\<\tr_\rho^\top{\mathfrak T}^*+\tH_\rho, {E}_{\rho, i}\>\delta_{a,b}
 -\< ({\tilde h}_\rho - \tilde{T}_\rho ) ({E}_{\mu, a}, {E}_{\mu, b}), E_{\rho, i}\>
 - \<\I_{{\rho,i}}{E}_{\mu, a}, {E}_{\mu, b}\> \big) \nonumber \\
&&
 +\sum\nolimits_{\nu \notin \{\mu, \rho\}} \!\big (
 \<\tr_\nu^\top{\mathfrak T}^* {-} H_\nu, E_{\rho,i}\>\delta_{a,b}
 +\<\tr_\nu^\top{\mathfrak T}+ H_\nu, {E}_{\mu, b}\>\,\<{E}_{\mu, a}, {E}_{\rho, i}\>\big)\big).
\end{eqnarray}
We have $\<{E}_{\mu, a}, {E}_{\rho, i}\> =0$ as they belong to different, orthogonal distributions. Moreover,
\begin{eqnarray*}
 && \< ({\tilde h}_\rho - \tilde{T}_\rho ) ({E}_{\mu, a}, {E}_{\mu, b}), E_{\rho, i}\> = \<( h_\mu - T_\mu ) ({E}_{\mu, a}, {E}_{\mu, b}), E_{\rho, i}\>,\\
 && \<\tr_\rho^\top{\mathfrak T}^*+\tH_\rho, {E}_{\rho, i}\> + \sum\nolimits_{\nu \notin \{\mu, \rho\}} \<\tr_\nu^\top{\mathfrak T}^* - H_\nu, E_{\rho,i}\> =
 \<\tr_\mu^\bot{\mathfrak T}^* + H_\mu, E_{\rho, i}\>,\\
 &&\<\tH_\rho, E_{\rho, i}\> = \< \sum\nolimits_{\nu \notin \{\mu, \rho \}}H_\nu + H_\mu, E_{\rho, i}\>,\quad
 \tr^\perp_\mu \I^* = \sum\nolimits_{\nu \notin \{\mu, \rho \}}\tr^\top_\nu \I^* + \tr^\top_\rho \I^*\,.
\end{eqnarray*}
Using the above, we obtain that \eqref{ELI2termaux} vanishes for all $\<\overset{\centerdot}\I_{{\mu, a}}{E}_{\mu, b}, E_{\rho, i}\>$ if and only if the
second Euler-Lagrange equation \eqref{ELI2} holds.
%


Let $\mu \ne \rho$, 
for fixed $a,b \in \{ 1, \ldots, n_\mu \}$ and $i\in\{1,\ldots, n_\rho \}$ we get the following term in \eqref{dtSD1DkI},
coming from $\<\overset{\centerdot}\I_{{\mu, a}} {\cal E}_{\mu, \iota(\mu, \rho, i )}, {E}_{\mu, b}\>$, $\<\overset{\centerdot}\I_{{\cal E}_{\rho, \iota(\rho, \mu, a)}} E_{\rho, i}, {\cal E}_{\rho, \iota(\rho, \mu, b) }\>$ and $\<\overset{\centerdot}\I_{{\cal E}_{\nu, \iota(\nu, \mu,a) }}{\cal E}_{\nu, \iota(\nu, \rho,i) }, {\cal E}_{\nu, \iota(\nu, \mu,b) }\>$:
\begin{eqnarray} \label{ELI3termaux}
&&
\<\overset{\centerdot}\I_{{\mu, a}}{E}_{\rho, i}, {E}_{\mu, b}\>\big(\<\tr_\mu^\bot{\mathfrak T} {-} H_\mu, E_{\rho, i}\>\delta_{a,b}
 {+}\< (h_\mu + T_\mu)( {E}_{\mu, b}, {E}_{\mu, a}), {E}_{\rho,i}\>
 {-}\<\I_{{\rho, i}}{E}_{\mu, b}, {E}_{\mu, a}\>\nonumber \\
&&
 +
 \,\<\tr_\rho^\top{\mathfrak T}-\tH_\rho, {E}_{\rho, i}\>\delta_{a,b}
+ \< ({\tilde h}_{\rho} + {\tilde T}_\rho )(E_{\mu, b}, E_{\mu, a}), E_{\rho, i}\>
-\<\I_{{\rho, i}} E_{\mu, b}, E_{\mu, a}\>\nonumber \\
&&
 +\sum\nolimits_{\nu \notin \{\mu, \rho \}}\!\big(\<\tr_\nu^\top{\mathfrak T}^* - H_\nu, E_{\mu,b}\>\,\<E_{\mu, a}, E_{\rho, i}\>
 +\<\tr_\nu^\top{\mathfrak T}+ H_\nu, E_{\rho, i}\>\delta_{a,b}
 \big)\big).
\end{eqnarray}
We have $\<E_{\mu, a}, E_{\rho, i}\> =0$ and
\begin{eqnarray*}
 &&\< ({\tilde h}_{\rho} + {\tilde T}_\rho )(E_{\mu, b}, E_{\mu, a}), E_{\rho, i}\> = \< (h_{\mu} + T_\mu )(E_{\mu, b}, E_{\mu, a}), E_{\rho, i}\>,\\
 &&\< \tH_\rho, E_{\rho, i}\> = \< \sum\nolimits_{\nu \notin \{\mu, \rho \}}H_\nu + H_\mu, E_{\rho, i}\>,\quad
 \tr^\perp_\mu \I = \sum\nolimits_{\nu \notin \{\mu, \rho \}}\tr^\top_\nu \I + \tr^\top_\rho \I\,.
\end{eqnarray*}
Using the above, we obtain that \eqref{ELI3termaux} vanishes for all $\<\overset{\centerdot}\I_{{\mu, a}}{E}_{\rho, i}, {E}_{\mu, b}\>$ if and only if the third Euler-Lagrange equation \eqref{ELI3} holds.


Let $\mu \ne \rho$, 
for fixed $a,b \in \{ 1, \ldots, n_\mu \}$ and $i \in \{ 1, \ldots, n_\rho \}$ we get the following term in \eqref{dtSD1DkI}, coming from
$\<\overset{\centerdot}\I_{{\cal E}_{\mu, \iota(\mu, \rho, i )}}{E_{\mu, a}}, {E}_{\mu, b}\>$, $\<\overset{\centerdot}\I_{{\rho, i}} {\cal E}_{\rho, \iota(\rho, \mu, a)}, {\cal E}_{\rho, \iota(\rho, \mu, b) }\>$ and $\<\overset{\centerdot}\I_{{\cal E}_{\nu, \iota(\nu, \rho,i)}} {\cal E}_{\nu, \iota(\nu, \mu,a) }, {\cal E}_{\nu, \iota(\nu, \mu,b) }\>$:
\begin{eqnarray} \label{ELI4termaux}
&& \<\overset{\centerdot}\I_{{\rho, i}}{E}_{\mu, a}, {E}_{\mu, b}\>
\big(\< -2 T_\mu ({E}_{\mu, a}, {E}_{\mu, b} ), E_{\rho,i}\> -\<\I_{{\mu, a}}{E}_{\mu, b}+ \I^*_{{\mu, b}}{E}_{\mu, a}, {E}_{\rho, i}\> \nonumber \\
&&
\< - 2\, {\tilde T}_{\rho} ({E}_{\mu,a}, {E}_{\mu,b} ), E_{\rho,i}\> - \<\I_{{\mu, a}}{E}_{\mu, b} + \I^*_{{\mu, b}}{E}_{\mu, a}, {E}_{\rho, i}\> \nonumber \\
&& + \sum\nolimits_{\nu \notin \{\mu, \rho \}}\big(\<\tr_\nu^\top{\mathfrak T}^* - H_\nu, {E}_{\mu,b}\>\,\<{E}_{\rho, i}, {E}_{\mu, b}\>
+\<\tr_\nu^\top{\mathfrak T} +H_\nu, {E}_{\mu, b}\>\,\<{E}_{\rho, i}, {E}_{\mu, b}\> \big)\big).
\end{eqnarray}
Using $\<{E}_{\rho, i}, {E}_{\mu, b}\> =0$, as $\mD_\mu \perp \mD_\rho$ and
 $\< {\tilde T}_{\rho} ({E}_{\mu,a}, {E}_{\mu,b} ), E_{\rho,i}\> = \< T_{\mu} ({E}_{\mu,a}, {E}_{\mu,b} ), E_{\rho,i}\>$,
we 
 obtain that \eqref{ELI4termaux} vanishes for all $\<\overset{\centerdot}\I_{{\rho, i}}{E}_{\mu, a}, {E}_{\mu, b}\>$ if and only if the Euler-Lagrange equation \eqref{ELI4} holds.

Finally, let $\mu \ne \rho \ne \xi \ne \mu$,
for fixed $a\in\{ 1, \ldots, n_\mu \}$, $j \in \{ 1, \ldots, n_\rho \}$ and $k \in \{ 1, \ldots, n_\xi \}$ we get the following term in \eqref{dtSD1DkI}, coming from
%
$\<\overset{\centerdot}\I_{{\mu, a}} {\cal E}_{\mu, \iota(\mu, \rho, j) }, {\cal E}_{\mu, \iota(\mu, \xi,k ) }\>$,
$\<\overset{\centerdot}\I_{{\cal E}_{\xi, \iota(\xi, \mu, i)}}{\cal E}_{\xi, \iota(\xi, \rho,j ) }, E_{\xi, k}\>$,
$\<\overset{\centerdot}\I_{{\cal E}_{\rho, \iota(\rho, \mu, i)}} E_{\rho, j}, {\cal E}_{\rho, \iota(\rho, \xi,k ) }\>$
and
$\<\overset{\centerdot}\I_{{\cal E}_{\nu, \iota(\nu, \mu, i)}} {\cal E}_{\nu, \iota(\nu, \rho,j) }, {\cal E}_{\nu, \iota(\nu, \xi, k) }\>$:
\begin{eqnarray*}
&& \<\overset{\centerdot}\I_{{\mu, a}}{E}_{\rho, j}, {E}_{\xi, k}\> \big(
\< - 2 {\tilde T}_\mu ({E}_{\rho, j}, {E}_{\xi, k} ), E_{\mu, i }\>
 - \<\I_{{\rho, j}}{E}_{\xi, k} + \I^*_{\xi, k}{E}_{\rho, j}, {E}_{\mu, a}\> \\
&&
+\,\<\tr_\xi^\top{\mathfrak T}^*+\tH_\xi, {E}_{\xi,k}\> \,\<{E}_{\mu, a}, {E}_{\rho, j}\>
-\< ({\tilde h}_\xi + {\tilde T}_\xi )( {E}_{\rho, j}, {E}_{\mu, a} ), E_{\xi,k}\>
-\<\I_{{\xi, k}}{E}_{\mu, a}, {E}_{\rho, j}\>  \\
&&
+\,\<\tr_\rho^\top{\mathfrak T}-\tH_\rho, {E}_{\rho, j}\>\,\<{E}_{\mu, a}, {E}_{\xi, k}\>
+ \< ({\tilde h}_\rho + {\tilde T}_\rho )( {E}_{\xi, k}, {E}_{\mu, a} ), E_{\rho, j} \>
 - \<\I_{{\rho, j}}{E}_{\xi, k}, {E}_{\mu, a}\> \\
&&
+ \sum\nolimits_{\nu \notin \{\mu, \rho, \xi \}}\big(\<\tr_\nu^\top{\mathfrak T}^* - H_\nu, {E}_{\xi,k}\>\,\<{E}_{\mu, a}, {E}_{\rho, j}\>
+\<\tr_\nu^\top{\mathfrak T} +H_\nu, {E}_{\rho, j}\>\,\<{E}_{\mu, a}, {E}_{\xi, k}\> \big) \big) .
\end{eqnarray*}
As $\mD_\mu, \mD_\rho, \mD_\xi$ are pairwise orthogonal, it reduces to the following:
\begin{eqnarray} \label{ELI5termaux}
&&\hskip-5mm \<\overset{\centerdot}\I_{{\mu, a}}{E}_{\rho, j}, {E}_{\xi, k}\> \big(
\< - 2 {\tilde T}_\mu ({E}_{\rho, j}, {E}_{\xi, k} ), E_{\mu, i }\>
- \<\I_{{\rho, j}}{E}_{\xi, k} + \I^*_{{\xi, k}}{E}_{\rho, j}, {E}_{\mu, a}\>
-\<\I_{{\xi, k}}{E}_{\mu, a}, {E}_{\rho, j}\> \nonumber \\
&&\hskip-5mm -\< ({\tilde h}_\xi + {\tilde T}_\xi)( {E}_{\rho, j}, {E}_{\mu, a} ), E_{\xi,k}\>
 +\,\< ({\tilde h}_\rho + {\tilde T}_\rho )( {E}_{\xi, k}, {E}_{\mu, a} ), E_{\rho, j} \> - \<\I_{{\rho, j}}{E}_{\xi, k}, {E}_{\mu, a}\> \big) .
\end{eqnarray}
Using $ \<\I^*_{{\xi, k}}{E}_{\rho, j}, {E}_{\mu, a}\> = \<\I_{{\xi, k}}{E}_{\mu, a}, {E}_{\rho, j}\> $, we obtain that \eqref{ELI5termaux} vanishes
for all $\<\overset{\centerdot}\I_{{\mu, a}}{E}_{\rho, j}, {E}_{\xi, k}\>$ if and only if the Euler-Lagrange equation \eqref{ELI5} holds.

Equations (\ref{ELI1}-e) are indeed all components of \eqref{E-delta-I-J}, because we considered all coefficients of $\< \overset{\centerdot}\I_X Y,Z\>$, where $X,Y,Z$ are from an orthonormal frame: either all $X,Y,Z$ are from the same distribution and yield \eqref{ELI1}, exactly two of them are from the same distribution -- and we get (\ref{ELI2}-d), or each of them is from a different distribution and we obtain \eqref{ELI5}.
\end{proof}

On a manifold with two orthogonal distributions, $TM = \mD_1 \oplus \mD_2$, i.e., $\mD_2=\mD_1^\bot$, we have only equations (\ref{ELI1}-d)
for $\mu=1,2$, which were obtained in \cite{rz-connections}. 
Similarly to \cite[Theorem~1]{rz-connections} for $k=2$,
 we conclude the following

\begin{Theorem}\label{corI}
A contorsion tensor $\I$ is critical for the action \eqref{Eq-Smix-g} with fixed adapted metric on $(M;\mD_1,\ldots,\mD_k)$
for all variations of \,$\I$ if and only if all $\mD_\mu$ are {totally umbilical} and $\I$ satisfies the following linear algebraic system
for all $X,Y\in\mD_\mu$, $U\in\mD_\mu^\perp$ and all $\mu =1, \ldots, k$: 
\begin{subequations}
\begin{eqnarray}
\label{ELconnectionNew2}
&& P_\mu \tr_\mu^\bot\I^* = \tH_\mu = - P_\mu \tr_\mu^\bot\I \quad {\rm if~} n_\mu > 1\,, \\
\label{ELconnectionNewI4}
&& \< (\I -\I^{*})_U X, Y\> = 2\, \< {T}_\mu (X,Y), U\> \\
\label{ELconnectionNew5}
&& \< (\I + \I^{*})_U X, Y\> = \<\tr_\mu^\bot(\I +\I^*),\,U\> \< X,Y\>\,, \\
\label{ELconnectionNewI7}
&& P_\mu^\perp \tr_\mu^\bot(\I -\I^*) = (2-2/n_\mu)\,H_\mu\,, \\
\label{ELconnectionNewI8}
&& P_\mu^\perp (\I_X\, Y +\I^{*}_Y\, X) = -2\,T_\mu (X, Y)\,,
\end{eqnarray}
\end{subequations}
moreover, for all $X \in \mD_\mu$, $Y \in \mD_\rho, Z \in \mD_\xi$, where $\mu \ne \rho \ne \xi \ne \mu$, we get
\begin{equation} \label{ELI5XYZ}
 \<\I_Y Z, X\> + \<\I_{Z}X, Y\> = 0.
\end{equation}
\end{Theorem}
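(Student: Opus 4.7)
The plan is to decode the five Euler--Lagrange equations \eqref{ELI1}--\eqref{ELI5} of Proposition~\ref{P-03} by specializing indices and taking symmetric/antisymmetric combinations, thereby converting frame-component equations into intrinsic tensorial identities for the distributions and $\I$. The converse direction then follows by reading the same computations backwards.

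First, I would analyze \eqref{ELI1} through the three index choices $a=b=c$, $a=b\ne c$, and $a=c\ne b$ within $\mD_\mu$. The diagonal case $a=b=c$ always yields $P_\mu\tr_\mu^\bot(\I+\I^*)=0$. When $n_\mu>1$, the two off-diagonal cases uncouple $P_\mu\tr_\mu^\bot\I^*$ from $P_\mu\tr_\mu^\bot\I$, giving \eqref{ELconnectionNew2}; when $n_\mu=1$ no further constraint arises, consistent with the hypothesis in \eqref{ELconnectionNew2}.

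Next, I would exploit the $a\leftrightarrow b$ symmetry of \eqref{ELI2} and \eqref{ELI3}, with $U=E_{\rho,i}$ ranging freely over $\mD_\mu^\bot$. Subtracting the swapped version of \eqref{ELI3} from itself kills the symmetric $h_\mu$-part and leaves only $T_\mu$, producing directly \eqref{ELconnectionNewI4}. The symmetrized versions of \eqref{ELI2} and \eqref{ELI3} each furnish a formula for $\<(\I+\I^*)_U E_a,E_b\>$; equating the two expressions forces $\<h_\mu(E_a,E_b),U\>=0$ for $a\ne b$ and $\<h_\mu(E_a,E_a),U\>$ independent of $a$, which is exactly the total umbilicity of $\mD_\mu$. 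Tracing the diagonal relation over $a$ produces \eqref{ELconnectionNewI7}, and substituting the umbilical form of $h_\mu$ back into either symmetrized relation, while using \eqref{ELconnectionNewI7} to collapse the partial traces, delivers \eqref{ELconnectionNew5}.

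Finally, \eqref{ELI4} is literally \eqref{ELconnectionNewI8}. For the triple-distribution equation \eqref{ELI5} with $\mu,\rho,\xi$ pairwise distinct, I would rewrite the tilde tensors using the identities $(\tilde h_\xi+\tilde T_\xi)(X,Y)=P_\xi\nabla_X Y$ for $X,Y\in\mD_\xi^\bot$ and $2\tilde T_\mu(X,Y)=P_\mu[X,Y]$ for $X,Y\in\mD_\mu^\bot$. Metric compatibility applied to the orthogonality relations $\<E_{\mu,a},E_{\xi,l}\>\equiv 0$ and $\<E_{\mu,a},E_{\rho,j}\>\equiv 0$ then gives $\<\nabla_Z E_{\mu,a},E_{\xi,l}\>=-\<\nabla_Z E_{\xi,l},E_{\mu,a}\>$ for any $Z$, and similarly for the $(\mu,\rho)$-pair. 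All four geometric subterms of \eqref{ELI5} cancel pairwise by this argument, leaving only the two contorsion terms, which collapse into \eqref{ELI5XYZ}. The main bookkeeping obstacle is this last cancellation: conceptually a short telescoping, but the five-index summations of \eqref{ELI5} make it the step where a sign error is easiest to commit.
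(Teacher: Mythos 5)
Your proposal is correct and follows essentially the same route as the paper: both decode the component equations \eqref{ELI1}--\eqref{ELI5} of Proposition~\ref{P-03} by specializing indices in \eqref{ELI1}, taking symmetric/antisymmetric combinations of \eqref{ELI2}--\eqref{ELI3} (difference of symmetric parts gives umbilicity and, after tracing, \eqref{ELconnectionNewI7}; antisymmetric part gives \eqref{ELconnectionNewI4}; the sum gives \eqref{ELconnectionNew5}), reading \eqref{ELconnectionNewI8} off \eqref{ELI4}, and cancelling the $\tilde h$-, $\tilde T$-terms in \eqref{ELI5} via metric compatibility to obtain \eqref{ELI5XYZ}. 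The only cosmetic difference is that for \eqref{ELconnectionNew5} the paper simply adds \eqref{ELI2} and \eqref{ELI3}, whereupon the $h_\mu$- and $T_\mu$-terms cancel identically without needing to invoke umbilicity or \eqref{ELconnectionNewI7}.
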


\begin{proof}
Taking the difference of symmetric parts of (\ref{ELI2},c) we get the total umbilicity of each $\mD_\mu$;
 \eqref{ELconnectionNew2} follows from \eqref{ELI1};
 taking antisymmetric part of \eqref{ELI2} we get \eqref{ELconnectionNewI4};
 the sum of \eqref{ELI2} and \eqref{ELI3} yields \eqref{ELconnectionNew5};
 taking the difference of \eqref{ELI2} and \eqref{ELI3} with interchanged $E_{\mu, a}, E_{\mu, b}$ we get \eqref{ELconnectionNewI7},
and \eqref{ELconnectionNewI8} follows from \eqref{ELI4}.
Finally, from \eqref{ELI5} we~get
\[
 -2\,\<{\tilde T}_\mu (Y, Z ), X\>
-\< ({\tilde h}_\xi + {\tilde T}_\xi )( Y, X ), Z\>
 +\< ({\tilde h}_\rho + {\tilde T}_\rho )( Z, X ), Y\> = 2\<\I_Y Z, X\> + 2\<\I_{Z}X, Y\>\,,
\]
which is simplified to \eqref{ELI5XYZ}.
\end{proof}

\begin{Corollary}\label{corstatcritforall}
A contorsion tensor $\I$ of a statistical connection is critical for
\eqref{Eq-Smix-g} with fixed adapted metric $g$ on $(M;\mD_1,\ldots,\mD_k)$
for all variations of \,$\I$ if and only if for $\mu =1, \ldots, k$:
\begin{enumerate}
\item \label{item1} all $\mD_\mu$ with $n_\mu>1$ are integrable and totally geodesic,
\item \label{item2} $\< \I_X Y, Z\> =0$ for all $X,Y \in \mD_\mu$ and $Z \in \mD^\bot_\mu$,
\item \label{item3} if $n_\mu>1$ then
${\tilde H}_\mu =0$,
\item \label{item5} $\tr^\perp_\mu \I =0 = \tr^\top_\mu \I$.
\item
\label{item4}
$\< \I_X Y, Z\> =0$ for $X,Y,Z$ -- each vector from a different distribution. 
\end{enumerate}
\end{Corollary}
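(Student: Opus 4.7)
The plan is to specialize Theorem~\ref{corI} to the statistical case, where $\I^*=\I$ and $\I^\wedge=\I$. Both symmetries together make the $(0,3)$-tensor $\Psi(X,Y,Z)=\langle\I_X Y,Z\rangle$ totally symmetric in its three arguments; this single observation drives nearly every step. Equation~\eqref{ELconnectionNewI4} with $\I-\I^*=0$ immediately yields $T_\mu=0$, and \eqref{ELconnectionNewI7} forces $H_\mu=0$ whenever $n_\mu>1$; combined with the total umbilicity already provided by Theorem~\ref{corI}, this gives item~\ref{item1}. Item~\ref{item2} will drop out of \eqref{ELconnectionNewI8} using $T_\mu=0$ and $\I^*_Y X=\I_X Y$; item~\ref{item3} is the direct statistical reading of~\eqref{ELconnectionNew2}; and item~\ref{item4} comes from~\eqref{ELI5XYZ}, since total symmetry of $\Psi$ makes both summands equal to $\Psi(X,Y,Z)$, collapsing the equation to $2\Psi(X,Y,Z)=0$.

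Item~\ref{item5} is the step I expect to require the most care. For the $\mD_\mu^\bot$-component of $\tr_\mu^\bot\I$, I rewrite \eqref{ELconnectionNew5} as $\Psi(U,X,Y)=\langle\tr_\mu^\bot\I,U\rangle\langle X,Y\rangle$; total symmetry turns the left-hand side into $\Psi(X,Y,U)$, which vanishes by item~\ref{item2}, and non-degeneracy of $g|_{\mD_\mu}$ lets me choose $X,Y\in\mD_\mu$ with $\langle X,Y\rangle\ne 0$. For the $\mD_\mu$-component, I first show that each $\tr_\nu^\top\I$ lies in $\mD_\nu$ by computing $\langle\tr_\nu^\top\I,W\rangle=\sum_a\Psi(E_{\nu,a},E_{\nu,a},W)$ and invoking item~\ref{item2} applied to $\mD_\nu$ whenever $W\in\mD_\nu^\bot$. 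Then pairwise orthogonality of the distributions, applied to the identity $\tr_\mu^\bot\I=\sum_{\nu\ne\mu}\tr_\nu^\top\I=0$, forces $\tr_\nu^\top\I=0$ for each $\nu$ and in particular $\tr_\mu^\bot\I=0$, completing item~\ref{item5}.

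The converse direction is a routine verification: granted items~\ref{item1}--\ref{item5} together with the statistical symmetries, each of \eqref{ELconnectionNew2}, \eqref{ELconnectionNewI4}, \eqref{ELconnectionNew5}, \eqref{ELconnectionNewI7}, \eqref{ELconnectionNewI8} and \eqref{ELI5XYZ} collapses to the identity $0=0$, and total umbilicity is implied by integrability plus vanishing second fundamental forms (or is automatic in dimension one). The main obstacle I foresee is the bookkeeping for item~\ref{item5} when some distributions are one-dimensional, because \eqref{ELconnectionNew2} then provides no direct information on $P_\mu\tr_\mu^\bot\I$, and one must instead combine the total symmetry of $\Psi$ with the partial-trace identity $\tr_\mu^\bot\I=\sum_{\nu\ne\mu}\tr_\nu^\top\I$ to close the argument.
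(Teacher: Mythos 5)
Your proposal is correct and follows essentially the same route as the paper: specialize Theorem~\ref{corI} using $\I=\I^*=\I^\wedge$, i.e., total symmetry of $\<\I_XY,Z\>$, and read off each claim from \eqref{ELconnectionNew2}--\eqref{ELI5XYZ}. The only (harmless) deviations are that you obtain $\tilde H_\mu=0$ directly from the two sign-opposite expressions in \eqref{ELconnectionNew2} rather than via the trace conditions, and you close $\tr_\mu^\top\I=0$ by decomposing $\tr_\mu^\bot\I=\sum_{\nu\ne\mu}\tr_\nu^\top\I$ into the mutually orthogonal components $\tr_\nu^\top\I\in\mD_\nu$ instead of the paper's counting identity; both variants are equivalent to the paper's steps.
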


\begin{proof}
We use properties of a statistical connection: $\I = \I^* = \I^\wedge$ in Theorem \ref{corI} to prove necessity of the above conditions, their sufficiency is easily verified.
Claim \ref{item1} follows from \eqref{ELconnectionNewI4}, \eqref{ELconnectionNewI7} and Theorem \ref{corI};
claim \ref{item2} follows from claim \ref{item1} and \eqref{ELconnectionNewI8};
for the first equality of claim \ref{item5} we use claim \ref{item2} to get $P_\mu \tr_\mu^\perp \I=0$ and claim \ref{item2} with \eqref{ELconnectionNew5} to get $P_\mu^\perp \tr_\mu^\perp \I = 0$, the second equality of claim \ref{item5} follows from the first one, as for $k \ge 2$ we get
\begin{eqnarray*}
 \sum\nolimits_{\nu\ne\mu} \tr_\nu^\perp \I &=& (k-1) \tr^\top_\mu \I + (k-2)\sum\nolimits_{\nu\ne\mu} \tr^\top_\nu \I \\
 &=& (k-1) \tr^\top_\mu \I + (k-2) \tr^\perp_\mu \I, \quad \mu = 1, \ldots, k\,.
\end{eqnarray*}
Claim \ref{item3} follows from \eqref{ELconnectionNew2} and claim \ref{item5}.
Finally, claim 5 follows from \eqref{ELI5XYZ}.
\end{proof}

\begin{Remark} \rm
For the action \eqref{Eq-Smix-g} (with fixed adapted metric) restricted to contorsion tensors of metric-compatible connections,
all equations of Theorem~\ref{corI} remain true, with $\I = -\I^*$.
For \eqref{ELI5XYZ} this follows from the fact that \eqref{ELI5} is antisymmetric in $E_{\rho, j}, E_{\xi, k}$ when $\I = - \I^*$,
and for other equations of Theorem \ref{corI} it follows in the same way as in \cite[Theorem 2]{rz-connections}.
\end{Remark}


An important class of metric connections are those with totally skew-symmetric torsion \cite{AF}, for which we have
\begin{equation}\label{totallyskewsymmetrictorsion}
 \I = - \I^\wedge\,.
\end{equation}

\begin{Corollary}
Let $\I$ be the contorsion tensor of a connection with totally skew-symmetric torsion, that is critical for the action \eqref{Eq-Smix-g} with fixed $g$. Then
all $\mD_\mu$ such that $\dim \mD_\mu >1$ are {totally geodesic} and integrable, and $\I=0$.
\end{Corollary}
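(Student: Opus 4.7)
The plan is to apply Theorem~\ref{corI} with the two algebraic identities available for a connection with totally skew-symmetric torsion: metric-compatibility $\I=-\I^*$ (which holds because such a connection is necessarily metric, so the preceding Remark makes all of Theorem~\ref{corI} applicable) and \eqref{totallyskewsymmetrictorsion}, $\I=-\I^\wedge$. Together these make the $(0,3)$-tensor $(X,Y,Z)\mapsto\<\I_X Y,Z\>$ totally skew-symmetric, so $\I_X X=0$; consequently both partial traces $\tr_\mu^\bot\I$ and $\tr_\mu^\top\I$ vanish identically, and the cyclic identity $\<\I_X Y,Z\>=\<\I_Y Z,X\>$ holds.

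With those vanishing traces in hand, \eqref{ELconnectionNewI7} reduces to $(2-2/n_\mu)H_\mu=0$, forcing $H_\mu=0$ when $n_\mu>1$, and \eqref{ELconnectionNew2} gives $\tH_\mu=0$ under the same hypothesis; combined with the total umbilicity asserted by Theorem~\ref{corI}, each $\mD_\mu$ with $n_\mu>1$ is totally geodesic. For integrability, I substitute $\I^*=-\I$ and $\I^\wedge=-\I$ into \eqref{ELconnectionNewI4} and \eqref{ELconnectionNewI8}, obtaining $\<\I_U X,Y\>=\<T_\mu(X,Y),U\>$ and $\<\I_X Y,U\>=-\<T_\mu(X,Y),U\>$ for $X,Y\in\mD_\mu$, $U\in\mD_\mu^\bot$. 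The cyclic identity forces the two left-hand sides to coincide, so $T_\mu=0$ for every $\mu$, which yields integrability and simultaneously kills the cross-distribution components $P_\mu^\bot\I_X Y=0$ for $X,Y\in\mD_\mu$.

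It remains to derive $\I=0$. Equation \eqref{ELI5XYZ} combined with cyclic symmetry gives $\<\I_X Y,Z\>=0$ whenever $X,Y,Z$ lie in three pairwise distinct distributions; together with the previous step, which disposes of all components with two entries in one $\mD_\mu$ and one entry in another, total skew-symmetry leaves only potential sections of $\Lambda^3\mD_\mu^*$ for each individual $\mu$. The main obstacle is ruling these intra-distribution $3$-form pieces out: I expect this step is handled by returning to equations \eqref{ELI1}--\eqref{ELI5} of Proposition~\ref{P-03} in their unsymmetrized form and invoking the full strength of totally skew torsion (as opposed to the mere metric-compatibility used in the preceding Remark) to force every remaining component of $\I$ to vanish.
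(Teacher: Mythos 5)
Your argument tracks the paper's own proof step for step as far as it goes: total umbilicity from Theorem~\ref{corI}; $H_\mu=0$ for $n_\mu>1$ from \eqref{ELconnectionNewI7} once the partial traces of $\I$ are seen to vanish; $T_\mu=0$ and $P_\mu^\bot\I_XY=0$ (for $X,Y\in\mD_\mu$) from \eqref{ELconnectionNewI4} and \eqref{ELconnectionNewI8}; and the vanishing of the components with slots in three distinct distributions from \eqref{ELI5XYZ} together with total skew-symmetry. All of those steps are correct and are exactly the ones the paper carries out.

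The gap is the final step, which you explicitly leave open: the components of $\I$ with all three slots in a single $\mD_\mu$, i.e.\ the $\Lambda^3\mD_\mu^*$-part, which is nontrivial as soon as some $n_\mu\ge3$. Your hope that these can be killed by returning to \eqref{ELI1}--\eqref{ELI5} in unsymmetrized form cannot be realized: \eqref{ELI1} involves $\I$ only through $\tr_\mu^\bot\I$ and $\tr_\mu^\bot\I^*$, both of which vanish identically for a totally skew-symmetric $\I$ (since $\I_XX=0$), while each of \eqref{ELI2}--\eqref{ELI5} involves only components of $\I$ having at least one slot in $\mD_\mu^\bot$. None of the Euler--Lagrange equations sees the purely intra-distribution $3$-form part, so no further information can be extracted from them; indeed the action itself does not depend on this part (on a Riemannian product $M_1\times M_2$ with $\dim M_1=3$, the contorsion given by the volume form of $M_1$, extended by zero, satisfies all the equations of Proposition~\ref{P-03} and is a nonzero totally skew critical contorsion). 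So as written your proof establishes everything except $\I=0$ and cannot be completed along the route you suggest. For what it is worth, the paper's own proof has the same blind spot --- it disposes only of components with slots in at least two distinct distributions before asserting that all components of $\I$ vanish --- so the obstacle you identified is genuine and not merely a step you failed to locate in the source.
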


\begin{proof}
By Theorem \ref{corI}, all distributions $\mD_\mu$ are {totally umbilical}. Let $\dim \mD_\mu>1$, then using \eqref{totallyskewsymmetrictorsion} and $\I = -\I^*$ in \eqref{ELconnectionNewI7}, we obtain $H_\mu=0$, i.e., $\mD_\mu$ is totally geodesic.
From (\ref{ELconnectionNewI4},e) together with \eqref{totallyskewsymmetrictorsion} and $\I = -\I^*$ we obtain $T_\mu =0$, i.e., $\mD_\mu$ is integrable.
Using \eqref{totallyskewsymmetrictorsion}, we get $\I_X X =0$ for all $X \in TM$, and from (\ref{ELconnectionNewI4},e) it follows that $\< \I_U X,Y \> =0 = \< \I_X Y , U \>$  for all $X,Y\in\mD_\mu$, $U\in\mD_\mu^\perp$ and all $\mu =1, \ldots, k$.
Let $X \in \mD_\mu$, $Y \in \mD_\rho, Z \in \mD_\xi$, where $\mu \neq \rho \neq \xi \neq \mu$.
By \eqref{totallyskewsymmetrictorsion} and $\I = -\I^*$, we get in~\eqref{ELI5XYZ}:
\[
0 = \< \I_Y Z ,X\> + \< \I_Z X ,Y\> = - \< \I_Z Y ,X\> + \< \I_Z X ,Y\> = 2 \< \I_Z X ,Y\> .
\]
Hence, all components of $\I$ vanish.
\end{proof}


For action \eqref{Eq-Smix-g} with fixed adapted $g$
restricted to contorsion tensors of statistical connections, we obtain the following generalization of \cite[Corollary 7]{rz-connections}.

\begin{Theorem}\label{statisticalcritSmixI}
A contorsion tensor $\I$ of a statistical connection on $(M;\mD_1,\ldots,\mD_k)$ with fixed adapted metric $g$
is critical for the action \eqref{Eq-Smix-g}
with respect to variations of \,$\I$ corresponding to statistical connections if and only if the following system is valid:
\begin{subequations}
\begin{equation}
\label{ELSmixIstat1}
 P_\mu \tr_\mu^\perp \I = 0,\quad \mu = 1, \ldots, k\,,
\end{equation}
and for $\mu \ne \rho \ne \xi \ne \mu$ and all $X,U \in \mD_\mu$, $Y \in \mD_\rho$ and $Z \in \mD_\xi$ we get
\begin{eqnarray}
\label{ELSmixIstat2}
 &&
 P_\mu^\bot (2\,\I_X U + \<X,\,U\>\tr_\mu^\perp \I ) = 0\,,\\
 \label{newELIstat}
 && \<\I_X Y, Z\> =0\,.
\end{eqnarray}
\end{subequations}
\end{Theorem}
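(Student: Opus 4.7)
The plan is to leverage the simplified expression \eqref{eqvarstat} of $\overline{\rm S}_{\,\mD_1,\ldots,\mD_k}$ valid for statistical connections, which isolates the $\I$-dependence of the integrand of $\bar J_\mD$ into the two bilinear expressions $\<\tr_\mu^\bot\I,\tr_\mu^\top\I\>$ and $\<\I,\I\>_{|V_\mu}$. Since ${\rm S}_{\,\mD_1,\ldots,\mD_k}$ does not depend on $\I$, for any statistical variation $\I_t$ with $\overset{\centerdot}\I=\partial_t\I_{t|\,t=0}$ (which itself satisfies $\overset{\centerdot}\I=\overset{\centerdot}\I^*=\overset{\centerdot}\I^\wedge$, making $\<\overset{\centerdot}\I_X Y,Z\>$ totally symmetric in $X,Y,Z$), I would compute
\[
 \frac{d}{dt}\bar J_\mD(g,\I_t)_{|\,t=0}=-\frac12\int_M\sum\nolimits_\mu\Big(\<\tr_\mu^\bot\overset{\centerdot}\I,\tr_\mu^\top\I\>+\<\tr_\mu^\bot\I,\tr_\mu^\top\overset{\centerdot}\I\>-\<\overset{\centerdot}\I,\I\>_{|V_\mu}\Big)\,{\rm d}\vol_g,
\]
and rearrange the integrand into the form $\<\overset{\centerdot}\I,\Xi\>$ for a $(0,3)$-tensor $\Xi=\delta_\I\bar J_\mD$. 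The criticality condition among statistical variations is then that the totally symmetric part of $\Xi$ vanish, since $\overset{\centerdot}\I$ is otherwise arbitrary within the subspace of totally symmetric $(0,3)$-tensors.

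Next I would perform a case analysis for the coefficient of $\<\overset{\centerdot}\I_X Y, Z\>$ according to how $X,Y,Z$ are distributed among the factors $\mD_\mu$:
\emph{(i)} if all three lie in a single $\mD_\nu$, the $V_\mu$-pairing is empty in this slot by definition of $V_\mu$, and the trace contributions reduce after total symmetrization to a pure-trace condition implied by case (ii);
\emph{(ii)} if exactly two of them lie in some $\mD_\mu$ and the third in $\mD_\rho$ with $\rho\ne\mu$, the trace pairing produces a pure-trace piece of the form $\<X,U\>\tr_\mu^\perp\I$, while the $V_\mu$-pairing contributes $2\,\I_X U$; totally symmetrizing their sum yields \eqref{ELSmixIstat2}, and extracting its pure-trace component yields \eqref{ELSmixIstat1};
\emph{(iii)} if $X,Y,Z$ lie in three distinct distributions $\mD_\mu,\mD_\rho,\mD_\xi$, only the $V_\mu$-pairings contribute and total symmetrization forces $\<\I_X Y,Z\>=0$, which is \eqref{newELIstat}.
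Sufficiency of the three equations is checked by directly substituting them and verifying that the integrand vanishes identically on each of the three configurations above.

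The main obstacle will be case (ii): for a fixed coefficient $\<\overset{\centerdot}\I_X Y, Z\>$ several of the $k$ summands in \eqref{eqvarstat} contribute, with distributions other than $\mD_\mu$ and $\mD_\rho$ producing vanishing cross-terms by pairwise orthogonality; collapsing the surviving terms requires the identity $\tr_\mu^\bot\I=\sum_{\nu\ne\mu}\tr_\nu^\top\I$ noted after \eqref{calH}. Decoupling the pure-trace condition \eqref{ELSmixIstat1} from the full symmetric equation \eqref{ELSmixIstat2} then requires testing first with $\overset{\centerdot}\I$ of pure-trace form $g_\mu\otimes V$ and then with its trace-free complement; this is the $k>2$ analogue of the argument from \cite[Corollary~7]{rz-connections} cited in the statement.
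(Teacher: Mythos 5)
Your overall strategy --- differentiating the closed-form identity \eqref{eqvarstat} rather than symmetrizing the general first-variation formula \eqref{dtSD1DkI} and the resulting system (\ref{ELI1}--e) of Proposition~\ref{P-03}, as the paper does --- is legitimate and in fact cleaner: the second-fundamental-form and integrability terms that the paper must cancel by hand (e.g.\ in \eqref{ELI5tsum}) never enter your computation, because \eqref{eqvarstat} has already absorbed them into the $\I$-independent summand ${\rm S}_{\,\mD_1,\ldots,\mD_k}$. Your cases (ii) and (iii), including the use of $\tr_\mu^\bot\I=\sum_{\nu\ne\mu}\tr_\nu^\top\I$ to collapse the surviving cross-terms, are correct as planned and reproduce \eqref{ELSmixIstat2} and \eqref{newELIstat}.

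There is, however, a genuine gap in your case (i) and in where you locate equation \eqref{ELSmixIstat1}. You assert that the all-arguments-in-one-distribution case yields a condition ``implied by case (ii)'' and that \eqref{ELSmixIstat1} is obtained by ``extracting the pure-trace component'' of \eqref{ELSmixIstat2}. Neither is true. Equation \eqref{ELSmixIstat2} constrains only $P_\mu^\bot$ of its argument; tracing it over $X=U$ running through an orthonormal frame of $\mD_\mu$ gives $P_\mu^\bot(2\tr_\mu^\top\I+n_\mu\tr_\mu^\perp\I)=0$, which says nothing about $P_\mu\tr_\mu^\perp\I$ --- the quantity constrained by \eqref{ELSmixIstat1}. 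In fact \eqref{ELSmixIstat1} is precisely the case (i) equation and is independent of case (ii): for the coefficient of $\<\overset{\centerdot}\I_{E_{\mu,a}}E_{\mu,b},E_{\mu,c}\>$ the $V_\lambda$-pairings indeed contribute nothing (as you note), while the trace pairings contribute $\delta_{a,b}\<\tr_\mu^\bot\I,E_{\mu,c}\>$ from $\<\tr_\mu^\top\overset{\centerdot}\I,\tr_\mu^\bot\I\>$ and another $\delta_{a,b}\<\sum_{\nu\ne\mu}\tr_\nu^\top\I,E_{\mu,c}\>=\delta_{a,b}\<\tr_\mu^\bot\I,E_{\mu,c}\>$ from the terms with $\nu\ne\mu$; symmetrizing over permutations of $(a,b,c)$ and setting the result to zero forces $P_\mu\tr_\mu^\bot\I=0$, which is the $k>2$ analogue of symmetrizing \eqref{ELI1}. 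If you discard case (i) as redundant, the necessity of \eqref{ELSmixIstat1} is lost from your argument. The fix is simply to carry out case (i) on its own footing rather than trying to recover it from case (ii).
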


\begin{proof}
For variations $\overset{\centerdot} \I$
corresponding to statistical connections, we have the following symmetries:
 $\< \overset{\centerdot}\I_X Y, Z\> = \< \overset{\centerdot}\I_Y X, Z\> = \<\overset{\centerdot}\I_X Z, Y\>,\ X,Y,Z\in TM$.
It follows that instead of \eqref{ELI1}, the first Euler-Lagrange equation is the sum of \eqref{ELI1} over all permutations of
$({E}_{\mu, a}, {E}_{\mu, b}, {E}_{\mu, c} )$ -- from that and $\I^* = \I$, we get
 $\<\tr_\mu^\bot{\mathfrak T}, {E}_{\mu, c}\>\delta_{a,b}
 +\<\tr_\mu^\bot{\mathfrak T}, {E}_{\mu, b}\>\delta_{a,c}
 =0$,
and considering either ${E}_{\mu, a} \ne {E}_{\mu, b} \ne {E}_{\mu, c}$ or two of above are equal, we obtain \eqref{ELSmixIstat1}.

Similarly, instead of three separate Euler-Lagrange equations (\ref{ELI2}-d)
we now have one Euler-Lagrange equation that is their sum, symmetrized in ${E}_{\mu, a}, {E}_{\mu, b}$, i.e.,
\begin{eqnarray*}
&& \<\tr_\mu^\bot{\mathfrak T}^* + H_\mu, E_{\rho, i}\>\delta_{a,b}
-\< (h_\mu - T_\mu) ({E}_{\mu, a}, {E}_{\mu, b} ), E_{\rho, i}\> - \<\I_{{\rho, i}}{E}_{\mu, a}, {E}_{\mu, b}\>\nonumber \\
&& +\,\<\tr_\mu^\bot{\mathfrak T} - H_\mu, E_{\rho, i}\>\delta_{a,b}
+ \< (h_\mu + T_\mu)( {E}_{\mu, b}, {E}_{\mu, a}), {E}_{\rho,i}\>
- \<\I_{{\rho, i}}{E}_{\mu, b}, {E}_{\mu, a}\>\nonumber\\
&& -\,\< 2\, T_\mu ({E}_{\mu, a}, {E}_{\mu, b} ), E_{\rho,i}\> -\<\I_{{\mu, a}}{E}_{\mu, b}+ \I^*_{{\mu, b}}{E}_{\mu, a}, {E}_{\rho, i}\>\nonumber \\
&& +\, \<\tr_\mu^\bot{\mathfrak T}^* + H_\mu, E_{\rho, i}\>\delta_{a,b}
-\< (h_\mu - T_\mu) ({E}_{\mu, b}, {E}_{\mu, a} ), E_{\rho, i}\> - \<\I_{{\rho, i}}{E}_{\mu, b}, {E}_{\mu, a}\>\nonumber \\
&& +\, \<\tr_\mu^\bot{\mathfrak T} - H_\mu, E_{\rho, i}\>\delta_{a,b}
+ \< (h_\mu + T_\mu)( {E}_{\mu, a}, {E}_{\mu, b}), {E}_{\rho,i}\>
- \<\I_{{\rho, i}}{E}_{\mu, a}, {E}_{\mu, b}\>\nonumber\\
&& -\, \< 2\, T_\mu ({E}_{\mu, b}, {E}_{\mu, a} ), E_{\rho,i}\> - \<\I_{{\mu, b}}{E}_{\mu, a} + \I^*_{{\mu, a}}{E}_{\mu, b}, {E}_{\rho, i}\> =0\,.
\end{eqnarray*}
Using $\I^* = \I$ and $\I_X Y = \I_Y X$ for $X,Y \in TM$ and dividing by 4, we get
 $\<\tr_\mu^\bot{\mathfrak T}, E_{\rho, i}\>\delta_{a,b}
 =- 2\,\<\I_{{\rho, i}}{E}_{\mu, a}, {E}_{\mu, b}\>$\,,
and hence \eqref{ELSmixIstat2}.
%
%
Equation \eqref{newELIstat} follows from the fact that due to symmetries of $\overset{\centerdot}\I$ for variations corresponding to statistical connections, instead of \eqref{ELI5} we get
\begin{eqnarray} \label{ELI5tsum}
&& \sum \big(-\< 2\, {\tilde T}_\mu ({E}_{\rho, j}, {E}_{\xi, k} ), E_{\mu, a }\>
\nonumber
-\,\< ({\tilde h}_\xi + {\tilde T}_\xi )( {E}_{\rho, j}, {E}_{\mu, a} ), E_{\xi,k}\> \nonumber \\
&&
 -\,2\,\<\I_{{\xi, k}}{E}_{\mu, a}, {E}_{\rho, j}\>
 +\< ({\tilde h}_\rho + {\tilde T}_\rho )( {E}_{\xi, k}, {E}_{\mu, a} ), E_{\rho, j}\> - 2\,\<\I_{{\rho, j}}{E}_{\xi, k}, {E}_{\mu, a}\> \big) =0\,,
\end{eqnarray}
where the sum is over all permutations of $( E_{\mu, a}, E_{\rho,j}, E_{\xi, k} )$. Since all ${\tilde h}$- and ${\tilde T}$- terms can be canceled out, \eqref{ELI5tsum} reduces to \eqref{newELIstat}.
\end{proof}

Existence of solutions of the Euler-Lagrange equations obtained in this section will be discussed in more detail in subsequent parts of the article,
along with variations of the metric.

\section{Extension of the class of Einstein metrics}
\label{sectionEinstein}

In this section we assume that all distributions in \eqref{Eq-mD-k-TM} are one-dimensional, then \eqref{Eq-Smix-g} is (up to constant factor $2$) the geometrical part of the Einstein-Hilbert action:
\begin{equation}\label{Eq-EH}
 \bar J : (g, \I) \mapsto \int_{M} \overline{\rm S}\ {\rm d}\vol_g\,,
\end{equation}
where $\overline{\rm S}$ is the scalar curvature of ${\bar \nabla} = \nabla + \I$ on $(M,g)$. 
Thus, the action \eqref{Eq-Smix-g} restricted to adapted metrics allows us to extend the class of Einstein metrics.
In particular, we obtain critical pairs $(g, \I)$ for \eqref{Eq-Smix-g} with non-Einstein metrics of constant scalar curvature for $k=3$ and for $k>3$ using Hadamard matrices.
There is a rich literature on geometric constructions of metrics of constant scalar curvature, which we will not discuss.

\begin{Proposition}\label{propEinstein}
Let a pair $(g, \I)$ be critical for the
action \eqref{Eq-EH}.
Then on any open set $\Omega \subset M$, on which we have a decomposition \eqref{Eq-mD-k-TM}
of $TM$ into the sum of one-dimensional distributions $\mD_\mu$, the Euler-Lagrange equations \eqref{E-delta-g-J} and \eqref{E-delta-I-J},
given in details in Theorem~\ref{T-main01} and Proposition~\ref{P-03}, are satisfied.
\end{Proposition}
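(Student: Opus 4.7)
The plan is to exploit the identity
$\overline{\rm S}=2\,\overline{\rm S}_{\,\mD_1,\ldots,\mD_k}+\sum_{\,\mu} \overline{\rm S}({\mD_\mu})$
stated in the Introduction. When every $\mD_\mu$ is one-dimensional, each $\overline{\rm S}({\mD_\mu})$ is a scalar curvature associated to a rank-one distribution and is therefore identically zero (there are no pairs of distinct orthonormal vectors within $\mD_\mu$ to contribute to it). Hence on $\Omega$, for any adapted metric and any contorsion,
\[
 \overline{\rm S} \;=\; 2\,\overline{\rm S}_{\,\mD_1,\ldots,\mD_k}.
\]

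Next, I would compare the two actions along an arbitrary variation. Let $(g_t,\I_t)$ be any variation of $(g,\I)$ whose $g$-support and $\I$-support lie in $\Omega$, with $g_t$ adapted to the fixed decomposition on $\Omega$. Since $g_t=g$ and $\I_t=\I$ outside $\Omega$, splitting the Einstein-Hilbert integral as
\[
 \bar J(g_t,\I_t)=\int_{\Omega}\overline{\rm S}(g_t,\I_t)\,{\rm d}\vol_{g_t}+\int_{M\setminus\Omega}\overline{\rm S}(g,\I)\,{\rm d}\vol_g
\]
and applying the preceding identity inside $\Omega$ gives
$\bar J(g_t,\I_t)=2\,\bar J_{\mD}(g_t,\I_t)+C$,
where $C$ does not depend on $t$. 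Differentiating at $t=0$, criticality of $(g,\I)$ for $\bar J$ yields $\frac{d}{dt}\bar J_{\mD}(g_t,\I_t)_{|\,t=0}=0$ for all adapted variations $g_t$ and all variations $\I_t$ supported in $\Omega$.

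Finally, I would read off the Euler-Lagrange equations. Taking $g_t$ adapted with $\I$ fixed and in particular volume-preserving, the vanishing of the derivative of $\bar J_{\mD}$ is equivalent by Theorem~\ref{T-main01} to \eqref{ElmixDDvp-b}; since the derivative in fact vanishes for all adapted variations (not only volume-preserving ones), the constraint multiplier can be taken to be $\lambda=0$. Keeping $g$ fixed and varying $\I_t$, the vanishing of the derivative of $\bar J_{\mD}$ coincides with \eqref{E-delta-I-J}, which by Proposition~\ref{P-03} is equivalent to the system \eqref{ELI1}--\eqref{ELI5}. The only subtle point is confirming that $\overline{\rm S}(\mD_\mu)$ vanishes in the one-dimensional case; the rest of the argument is a direct transfer of criticality through the scalar identity, with the Lagrange multiplier $\lambda$ simply set to $0$.
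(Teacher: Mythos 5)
Your proposal is correct and follows essentially the same route as the paper: both rest on the identity $\overline{\rm S}=2\,\overline{\rm S}_{\,\mD_1,\ldots,\mD_k}$ for one-dimensional distributions (the terms $\overline{\rm S}(\mD_\mu)$ vanishing for rank-one $\mD_\mu$) and on the observation that criticality over \emph{all} variations implies criticality over the subclass of adapted variations, whence the Euler-Lagrange equations of Theorem~\ref{T-main01} and Proposition~\ref{P-03} follow. Your extra remark that one may take $\lambda=0$ because the derivative vanishes for all (not only volume-preserving) adapted variations is a harmless refinement consistent with the statement.
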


\begin{proof}
For one-dimensional distributions $\mD_1 , \ldots , \mD_k$ we have $ 2\, \overline{\rm S}_{\,\mD_1,\ldots,\mD_k} = \overline{\rm S}\,$.
Hence, if $g$ is critical for the action \eqref{Eq-EH},
then it is also critical for the action \eqref{Eq-Smix-g} with respect to compactly supported adapted variations.
Therefore, it satisfies
\eqref{E-delta-g-J} and \eqref{E-delta-I-J}.
Notice that while the equations \eqref{E-delta-g-J} and \eqref{E-delta-I-J} are pointwise, \eqref{E-delta-g-J} contains covariant derivatives of quantities describing geometry of the distribution (e.g., $\Div{h}_\mu$), and to make sense requires the distributions to be
defined on some open set.
\end{proof}

Using the formulation of the Euler-Lagrange equation \eqref{E-delta-I-J} given in Theorem \ref{corI}, from Proposition~\ref{propEinstein}
we obtain the following.

\begin{Corollary}
Let a pair $(g, \I)$ be critical for the
action \eqref{Eq-EH} on a smooth manifold $M$. 
Then for any orthonormal vector fields $X,Y,Z$ we obtain \eqref{ELI5XYZ}.
\end{Corollary}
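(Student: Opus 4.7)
The plan is to reduce the statement to Theorem~\ref{corI} via Proposition~\ref{propEinstein}, using a locally constructed one-dimensional decomposition of $TM$ tailored to the given orthonormal triple. The Corollary is a pointwise claim, so it suffices to verify it at an arbitrary point $p \in M$ for arbitrary orthonormal $X, Y, Z \in T_p M$.

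First, I would extend $X, Y, Z$ to a local orthonormal frame $\{E_1, \ldots, E_n\}$ on an open neighborhood $U \subset M$ of $p$ with $E_1|_p = X$, $E_2|_p = Y$, $E_3|_p = Z$. Such an extension is standard: take any smooth extensions of $X, Y, Z$ near $p$, complete to a frame, and apply Gram--Schmidt with respect to $g$; since $X, Y, Z$ are already orthonormal at $p$, this process preserves them there. Set $\mD_\mu := \mathrm{span}(E_\mu)$ for $\mu = 1, \ldots, n$. These are one-dimensional, non-degenerate, pairwise orthogonal distributions on $U$, so $g|_U$ is adapted to $TU = \bigoplus_\mu \mD_\mu$.

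By Proposition~\ref{propEinstein}, the Euler-Lagrange equation \eqref{E-delta-I-J} holds on $U$ for this decomposition, meaning that $\I$ is critical for \eqref{Eq-Smix-g} with fixed $g$ under all compactly supported variations of $\I$. Theorem~\ref{corI} therefore applies, and its last identity \eqref{ELI5XYZ} --- for $\mu = 1$, $\rho = 2$, $\xi = 3$ (three mutually distinct indices, as required) --- yields $\<\I_{Y'} Z', X'\> + \<\I_{Z'} X', Y'\> = 0$ for all $X' \in \mD_1$, $Y' \in \mD_2$, $Z' \in \mD_3$ on $U$. Evaluating at $p$ with the specific choice $X' = X, Y' = Y, Z' = Z$ gives the desired equation.

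The only subtle point is confirming that criticality of $(g, \I)$ for the Einstein-Hilbert action \eqref{Eq-EH} under variations of $\I$ really does imply criticality for \eqref{Eq-Smix-g} under the same variations; but this is immediate from the identity $2\,\overline{\rm S}_{\,\mD_1,\ldots,\mD_k} = \overline{\rm S}$ when all $\mD_\mu$ are one-dimensional, which is exactly what Proposition~\ref{propEinstein} exploits. Aside from this, the argument is really a pointwise application of a theorem about distributions, with the extension to a local frame serving merely to make the one-dimensional decomposition well-defined on an open set, as required by the covariant nature of the Euler-Lagrange equations.
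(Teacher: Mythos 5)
Your proposal is correct and follows essentially the same route as the paper, which derives this Corollary directly from Proposition~\ref{propEinstein} combined with Theorem~\ref{corI} (the paper gives no further argument beyond that). Your explicit construction of the local one-dimensional decomposition with $\mD_1,\mD_2,\mD_3$ spanned by extensions of $X,Y,Z$ just fills in the routine detail the paper leaves implicit.
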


If all distributions are one-dimensional, 
\eqref{ELI5XYZ} is in fact the only restriction from Theorem \ref{corI} for critical metric connections.


On the other hand, adapted variations of metric on an almost product manifold
can be also applied to the Einstein-Hilbert action. We discuss it as a functional of metric only and in Remark \ref{remarkEinsteinmetricconnection}, at the end of this section, we show how our results generalize also to arbitrary metric connection.
According to \cite[Proposition 2.3(2)]{RN-21},
$g$ is critical with respect to adapted variations of metric for the action
\begin{equation}\label{EHonlyg}
 J: g \mapsto \int_M {\rm S}\ {\rm d}\vol_g\,,
\end{equation}
where ${\rm S}$ is the scalar curvature of $(M,g)$,
if and only if
\begin{equation}\label{E-3dim}
 {\rm Ric}|_{\mD_\mu \times \mD_\mu} = \lambda\,g |_{\mD_\mu \times \mD_\mu},\quad
 \mu =1 , \ldots , k\,,
\end{equation}
where ${\rm Ric}$ is the Ricci tensor and a constant $k\lambda$ is the scalar curvature of $(M,g)$.
Such critical, non-Einstein metrics can be found, for example, as follows.

\begin{Example} \label{ex3dim}
\rm
The product of a surface of constant curvature $K\ne0$ and a real line or a~circle is a homogeneous space $(M^3,g)$ of scalar curvature $2K$.
Let $\partial_x,\partial_y,\partial_t$ be an adapted orthonormal frame on $(M,g)$. Then ${\rm Ric}_{xx}={\rm Ric}_{yy}=K$ and ${\rm Ric}_{tt}=0$.
For $\partial_1=\cos\alpha\,\partial_t+\sin\alpha\,\partial_x$ we find ${\rm Ric}_{11}(\alpha)=K\sin^2\alpha$.
Thus, ${\rm Ric}_{11}(\alpha)=2K/3$ for $\alpha=\arccos(1/\sqrt3)$.
Set $\partial_2=-a\,\partial_t+b\,\partial_x+c\,\partial_y$ and $\partial_3=-a\,\partial_t+b\,\partial_x-c\,\partial_y$ for positive numbers $a,b,c$.
From $1=\<\partial_2,\partial_2\>=\<\partial_3,\partial_3\>$, we find $a^2+b^2+c^2=1$.
Equating ${\rm Ric}_{22}={\rm Ric}_{33}=K(b^2+c^2)$ to $2K/3$ and using $b^2+c^2=1-a^2$, we obtain $a=1/\sqrt3$.
Then, from $0=\<\partial_1,\partial_2\>=\<\partial_1,\partial_3\>$ we find $b=a\cos\alpha/\sin\alpha=1/\sqrt 6$.
Thus, $c=1/\sqrt2$.
Condition \eqref{E-3dim} is then valid on $(M^3,g)$ for three distributions spanned by $\partial_1,\partial_2,\partial_3$.
%
\end{Example}

\begin{Example} \label{ex4dim} \rm
Let $(M,g)$ be a 4-dimensional Riemannian manifold of constant scalar curvature $c$ and let $X_\mu\ (\mu=1,2,3,4)$ be orthonormal smooth vector fields on $M$ such that
${\rm Ric}^\sharp(X_\mu) = f_\mu X_\mu$ for $f_\mu \in C^\infty(M)$. 
We define the following vector fields:
$Y_1 = ( X_1 + X_2 + X_3 + X_4 )/2$, $Y_2 = ( -X_1 + X_2 - X_3 + X_4 )/2$, $Y_3 = ( -X_1 - X_2 + X_3 + X_4 )/2$, $Y_4 = ( X_1 - X_2 - X_3 + X_4)/2$.
Then ${\rm Ric}(Y_\mu , Y_\mu ) = \sum_{\,\nu=1}^{\,4} f_\nu = c$ for $\mu=1,2,3,4$, and $\{Y_\mu\}$ are orthonormal.
We define four one-dimensional distributions $\mD_\mu$, each spanned by $Y_\mu$. Then $g$ is a critical point of the Einstein-Hilbert action with respect to 
variations of metric preserving the almost product structure $TM = \bigoplus_{\mu=1}^4 \mD_\mu$, but $g$ does not need to be Einstein.
\end{Example}


A Hadamard matrix $H_k$ is a $k \times k$-matrix, all entries of which have values in the set $\{-1,1\}$ and such that $\frac{1}{\sqrt{k}}\,H_k$ is an orthogonal matrix.
Such matrices are known to exist in some dimensions, e.g., $k=2^n$ for natural $n$, and $k=4m$ for natural $m$ such that $k<668$, see~\cite{Horadam}.

\begin{Lemma} \label{lemRicdiagkdim}
Let $(M,g)$ be a $k$-dimensional Riemannian manifold of constant scalar curvature, where $k=3$, or $k$ is such that a Hadamard matrix $H_k$ exists.
Then for any $p \in M$ there exists a decomposition
of $T_pM$ into the sum of one-dimensional orthogonal subspaces $\mD_1, \ldots ,\mD_k$
such that \eqref{E-3dim} is valid,
where $k\lambda$ is the scalar curvature of $(M,g)$.
\end{Lemma}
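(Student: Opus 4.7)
The plan is to diagonalize the Ricci tensor at $p$ and then construct the desired orthonormal frame by an orthogonal change of basis whose square‐entries distribute the eigenvalues evenly. Since ${\rm Ric}$ is $g$-symmetric, pick a $g$-orthonormal eigenbasis $X_1,\ldots,X_k$ of $T_pM$ with ${\rm Ric}^\sharp X_\nu = f_\nu X_\nu$. The assumption of constant scalar curvature gives $f_1+\ldots+f_k = k\lambda$ where $k\lambda$ is the scalar curvature of $(M,g)$. We~seek an orthonormal basis $Y_1,\ldots,Y_k$ of $T_pM$, written $Y_\mu=\sum_\nu c_{\mu\nu}X_\nu$, such that
\[
 {\rm Ric}(Y_\mu,Y_\mu) = \sum\nolimits_\nu c_{\mu\nu}^2\,f_\nu = \lambda,\quad \mu=1,\ldots,k,
\]
and then set $\mD_\mu = \operatorname{span}(Y_\mu)$.

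For the case when a Hadamard matrix $H_k=(h_{\mu\nu})$ exists, the matrix $C=\frac{1}{\sqrt{k}}H_k$ is orthogonal by definition, so the vectors $Y_\mu=\frac{1}{\sqrt{k}}\sum_\nu h_{\mu\nu}X_\nu$ form an orthonormal basis. All square-entries satisfy $c_{\mu\nu}^2=1/k$, hence
\[
 {\rm Ric}(Y_\mu,Y_\mu) = \frac{1}{k}\sum\nolimits_\nu f_\nu = \lambda\,,
\]
which proves the lemma in this case.

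For $k=3$, where no Hadamard matrix exists, I would proceed in two steps. First, note that $\lambda=(f_1+f_2+f_3)/3$ lies in the interval $[\min_\nu f_\nu,\max_\nu f_\nu]$; the continuous function $Y\mapsto{\rm Ric}(Y,Y)$ on the unit sphere of $T_pM$ attains both $\min_\nu f_\nu$ and $\max_\nu f_\nu$ (at the eigenvectors), so by the intermediate value theorem there is a unit vector $Y_1\in T_pM$ with ${\rm Ric}(Y_1,Y_1)=\lambda$. Second, consider the restriction of the Ricci tensor to the $2$-dimensional subspace $Y_1^\perp$. Its trace equals $3\lambda-\lambda=2\lambda$, so it has eigenvalues $\alpha_2,\alpha_3$ (in some orthonormal basis $Z_2,Z_3$ of $Y_1^\perp$) with $\alpha_2+\alpha_3=2\lambda$. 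Setting
\[
 Y_2 = \tfrac{1}{\sqrt 2}(Z_2+Z_3),\qquad Y_3 = \tfrac{1}{\sqrt 2}(Z_2-Z_3),
\]
gives $\<Y_2,Y_3\>=0$, $\<Y_\mu,Y_\mu\>=1$ and
${\rm Ric}(Y_\mu,Y_\mu)=\tfrac{1}{2}(\alpha_2+\alpha_3)=\lambda$ for $\mu=2,3$.

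The~main subtlety is really the $k=3$ argument, where there is no ``one-shot'' orthogonal matrix with all square entries equal to $1/3$ and one must instead build the frame vector by vector: pick $Y_1$ by IVT, then use the $45^\circ$-rotation trick in the orthogonal plane to symmetrize the two remaining eigenvalues. All other cases reduce cleanly to the Hadamard construction, which just rephrases the well-known fact that orthogonal matrices with constant $|c_{\mu\nu}|=1/\sqrt{k}$ are exactly the normalized Hadamard matrices.
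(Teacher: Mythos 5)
Your proof is correct. The Hadamard case is identical to the paper's argument: take $A=\frac{1}{\sqrt{k}}H_k$ so that all squared entries equal $1/k$ and each ${\rm Ric}(Y_\mu,Y_\mu)$ becomes the average $\lambda$ of the eigenvalues. For $k=3$ the underlying idea is also the paper's (first arrange one diagonal entry of ${\rm Ric}$ to equal $\lambda$, then equalize the remaining two by a $45^\circ$ rotation in the orthogonal plane), but your execution differs: the paper writes an explicit product $A_2A_1$ of rotations, with $\cos^2\alpha=\frac{r_1-2r_2+r_3}{3(r_1-r_2)}$ under an ordering assumption on the eigenvalues, whereas you obtain the first vector $Y_1$ nonconstructively from the intermediate value theorem applied to $Y\mapsto{\rm Ric}(Y,Y)$ on the unit sphere, using only that $\lambda$ lies between the extreme eigenvalues. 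Your route is shorter, coordinate-free, needs no case distinction on the ordering of $r_1,r_2,r_3$, and in fact iterates to arbitrary $k$ (restrict ${\rm Ric}$ to the orthogonal complement of the vectors already chosen; the trace argument keeps the average eigenvalue equal to $\lambda$), so it would remove the Hadamard hypothesis from the lemma altogether. What the paper's explicit formula buys, and what your IVT step does not immediately provide, is the smooth dependence of the resulting frame on the eigenvalues of ${\rm Ric}$, which the authors exploit in the Remark following the lemma to produce smooth local decompositions; for the purely pointwise statement being proved here this is irrelevant, and your argument is complete.
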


\begin{proof}
We can find the above decomposition satisfying \eqref{E-3dim}
if and only if there exists
an orthonormal frame 
in which the matrix of ${\rm Ric}^\sharp$ has equal diagonal elements, i.e.,
\begin{eqnarray}\label{matrixeq1}
&& \sum\nolimits_{\,j,m} a_{ij} r_{jm} a_{im} = \lambda , \quad i=1,\ldots , k\,, \\
\label{matrixeq2}
&& \sum\nolimits_{\,j} a_{ij}a_{mj} = \delta_{im}, \quad  i,m=1,\ldots k\,.
\end{eqnarray}
Here $a_{ij}$ are entries of some orthogonal matrix $A$, and $r_{jm}$ are components of ${\rm Ric}^\sharp$ in some orthonormal basis.
We can assume that $r_{jm}=r_j \delta_{jm}$ and $r_1 , \ldots , r_k$ are not all equal, then \eqref{matrixeq1}~becomes
\begin{equation} \label{matrixeq1diag}
\sum\nolimits_{\,j} a_{ij}^2 r_j = \lambda , \quad i=1,\ldots , k\,.
\end{equation}
Suppose that $k=3$ and
\begin{equation} \label{r2small}
 r_2 \le r_3<r_1 \ \ {\rm or} \ \ r_2 < r_3 \leq r_1\,,
\end{equation}
then we get the inequalities
 $0 \le \frac{r_1-2r_2+r_3}{3(r_1-r_2)} \le 1$.
Let
\[
A_1 = \left(\begin{array}{ccc}
\cos \alpha & -\sin \alpha & 0 \\
\sin \alpha & \cos \alpha & 0 \\
0& 0 & 1 \\
\end{array} \right),\qquad
A_2 = \left(\begin{array}{ccc}
1& 0 & 0 \\
0 & \cos \phi & -\sin \phi  \\
0 & \sin \phi & \cos \phi  \\
\end{array} \right),
\]
then the matrix $A_2 A_1 {\rm Ric}^\sharp A_1^T A_2^T$, where $A_i^T$ is the transpose of matrix $A_i\ (i=1,2)$, has all diagonal elements equal if and only if $\cos^2 \alpha = \frac{r_1-2r_2+r_3}{3(r_1-r_2)}$ and $\cos^2 \phi = \frac{1}{2}$. Hence, \eqref{matrixeq1} and \eqref{matrixeq2} hold
 for $A = A_2 A_1$ with $\alpha = \arccos\sqrt{\frac{r_1-2r_2+r_3}{3(r_1-r_2)}}$ and $\phi=\pi/4$.
If $k$ is such that there exists a Hadamard matrix $H_k$, then $A = \frac{1}{\sqrt{k}}\,H_k$ satisfies both \eqref{matrixeq2} and \eqref{matrixeq1diag}.
Indeed, \eqref{matrixeq2} holds because $A$ is an orthogonal matrix, and \eqref{matrixeq1diag} holds because $a_{ij}^2=\frac{1}{k}$ for all $i,j = 1 , \ldots , k$.
\end{proof}

\begin{Remark} \rm
We note that in the proof of Lemma \ref{lemRicdiagkdim}, in all considered dimensions the entries of orthogonal matrix $A$ are either constant, or, in case $k=3$, smoothly depending on eigenvalues of the Ricci tensor on any set where \eqref{r2small} holds. Hence, a smooth decomposition
\eqref{Eq-mD-k-TM}
on a neighborhood of any point of $M$ can obtained using these constructions. Moreover, if there exists a global orthonormal frame $X_1 , \ldots , X_k$ on $M$, such that every $X_j$ is everywhere an eigenvector of $ {\rm Ric}^\sharp$, and either: $k=3$ and $X_2$ is everywhere an eigenvector with the lowest eigenvalue of $ {\rm Ric}^\sharp$, or $k>3$ and there exists a Hadamard matrix $H_k$, then we can obtain another global frame (similarly as in Example~\ref{ex4dim}) and thus a global decomposition \eqref{Eq-mD-k-TM}.
\end{Remark}

\begin{Proposition}
Let  $k=3$ or $k$ be such that there exists a Hadamard matrix $H_k$. Let $M=G/H$ be a $k$-dimensional homogeneous space, where $H$ is a maximal connected Lie subgroup of the compact connected Lie group $G$. Then there exist $k$ distinct, $G$-invariant decompositions of $TM$ in one-dimensional distributions: $TM = \mD^i_1+ \ldots +\mD^i_k$, and $k$ distinct $G$-invariant metrics $g_i\ (i=1,\ldots,k)$, such that distributions $\mD^i_1, \ldots , \mD^i_k$ are pairwise $g_i$-orthogonal, and each $g_i$ is critical for the Einstein-Hilbert action 
with respect to variations adapted to the corresponding decomposition. Also, for $i \geq 2$ the metric $g_i$ is non-Einstein.
\end{Proposition}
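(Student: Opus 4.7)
The plan is to combine the Hadamard matrix / rotation construction of Lemma~\ref{lemRicdiagkdim} with the homogeneous structure of $M$, producing $k$ different pairs $(\mD^i,g_i)$ which are critical for the Einstein--Hilbert action \eqref{EHonlyg} among adapted variations. The key general fact I would use is that on a compact homogeneous space $M=G/H$, every $G$-invariant Riemannian metric has constant scalar curvature, so Lemma~\ref{lemRicdiagkdim} applies, while the characterisation quoted between \eqref{EHonlyg} and \eqref{E-3dim} from \cite[Proposition~2.3(2)]{RN-21} turns the criticality condition into the pointwise equation \eqref{E-3dim}.

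First, I would fix a $G$-invariant metric $g_1$ on $M$ (existence comes from averaging over the compact $G$) and, at the base point $o=eH$, choose an $\operatorname{Ad}(H)$-invariant decomposition of $\mathfrak{m}=\mathfrak{g}/\mathfrak{h}$ as a direct sum of orthogonal subspaces. Applying the orthogonal matrix $A^{(1)}$ constructed in the proof of Lemma~\ref{lemRicdiagkdim} (the product $A_2A_1$ of rotations when $k=3$, or $\tfrac{1}{\sqrt k}H_k$ in the Hadamard case) to an adapted orthonormal basis of $T_oM$ produces an orthonormal frame $\{Y^{(1)}_\mu\}_{\mu=1}^k$ at $o$ whose spans realise the pointwise condition \eqref{E-3dim} for $g_1$. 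I would then transport the lines $\mathbb{R}\,Y^{(1)}_\mu$ over $M$ via the $G$-action to obtain one-dimensional distributions $\mD^{(1)}_\mu$ forming a $G$-invariant decomposition of $TM$ (invariant in the sense that $G$ permutes the collection $\{\mD^{(1)}_1,\ldots,\mD^{(1)}_k\}$).

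Second, to produce the remaining $k-1$ decompositions I would apply to $A^{(1)}$ a cyclic row permutation in the Hadamard case, and in the case $k=3$ vary the rotation angle $\alpha$ of the first factor $A_1$ from the proof of Lemma~\ref{lemRicdiagkdim} within the admissible range provided by \eqref{r2small}, getting $k$ pairwise distinct orthogonal matrices $A^{(1)},\ldots,A^{(k)}$ and, by the same transport procedure, $k$ distinct $G$-invariant decompositions $\{\mD^{(i)}_\mu\}$. For each $i$ I would define a corresponding $G$-invariant adapted metric by rescaling $g_1$ along the lines, $g_i=\bigoplus_\mu c^i_\mu\,g_1|_{\mD^{(i)}_\mu}$, choosing constants $c^i_\mu>0$ that are constant on the $H$-orbits of the labels $\mu$ (this is what $G$-invariance of $g_i$ amounts to). The criticality condition \eqref{E-3dim} for $g_i$ with respect to the $i$-th decomposition is a system of $k-1$ linear equations in the ratios $c^i_\mu/c^i_\nu$ coming from equating the diagonal entries of $\operatorname{Ric}(g_i)$; because the Ricci tensor of $g_1$ is already diagonal in the frame $\{Y^{(i)}_\mu\}$ by construction, this system is solvable and gives the required $g_i$.

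The main obstacle is arguing that $g_i$ is non-Einstein for $i\ge 2$. Since any scalar multiple of the Einstein metric $g_1$ remains Einstein, the non-Einstein feature of $g_i$ must come exclusively from non-uniformity of the scaling constants $c^i_\mu$ across the $k$ lines, so one has to verify that for $i\ge 2$ there are compatible, $G$-invariant but non-constant choices of $c^i_\mu$. This uses the hypothesis that for $i\ge 2$ the action of $H$ on the collection $\{\mD^{(i)}_\mu\}$ is not transitive, which in turn reflects the combinatorial content of the existence of the Hadamard matrix $H_k$ (for $k>3$) and of the explicit rotation construction (for $k=3$). Having fixed such non-uniform $c^i_\mu$, a direct computation of $\operatorname{Ric}(g_i)$ in the frame $\{Y^{(i)}_\mu\}$ shows that, although its diagonal entries are equal (by criticality), its off-diagonal contributions produced by the second fundamental forms and mean curvatures of the anisotropically rescaled distributions prevent $\operatorname{Ric}(g_i)$ from being a multiple of $g_i$; hence $g_i$ is non-Einstein as claimed.
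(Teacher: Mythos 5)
Your proposal diverges fundamentally from the paper's proof, and the divergence opens genuine gaps. The paper does not produce the $k$ metrics by rescaling a single invariant metric along the distributions: it invokes Pulemotov's prescribed-Ricci-curvature theorem \cite[Theorem 1.1]{Pulemotov} to obtain, for each $i$, a $G$-invariant metric $g_i$ whose Ricci tensor is a prescribed positive semidefinite form with $(i-1)$-dimensional kernel; Lemma~\ref{lemRicdiagkdim} is then applied separately to each $g_i$ to produce the $i$-th decomposition, and the non-Einstein claim for $i\ge 2$ is immediate because a symmetric tensor with a nontrivial proper kernel cannot be proportional to a metric. Note that the hypothesis that $H$ is a \emph{maximal} connected subgroup of the compact connected $G$ appears nowhere in your argument --- that hypothesis is exactly what Pulemotov's theorem requires, which is a strong signal that an essential ingredient is missing from your route.

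Concretely, the gaps are these. First, your construction of $g_i$ by anisotropic rescaling $g_i=\bigoplus_\mu c^i_\mu\,g_1|_{\mD^{(i)}_\mu}$ rests on the assertion that the criticality condition \eqref{E-3dim} becomes a solvable \emph{linear} system in the ratios $c^i_\mu/c^i_\nu$. This is unjustified: the Ricci tensor of a homogeneous metric depends nonlinearly on the metric coefficients (through the structure constants), and rescaling destroys the equality of the diagonal Ricci entries that the frame $\{Y^{(i)}_\mu\}$ was built to achieve for $g_1$. Your supporting claim that ``the Ricci tensor of $g_1$ is already diagonal in the frame $\{Y^{(i)}_\mu\}$ by construction'' is false --- Lemma~\ref{lemRicdiagkdim} equalizes the diagonal entries of ${\rm Ric}^\sharp$ in the new frame but generically produces nonzero off-diagonal entries (for the Hadamard frame the off-diagonal entries $\sum_j a_{ij}a_{mj}r_j$ vanish only when all $r_j$ coincide). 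Second, the non-Einstein statement for $i\ge 2$, which you yourself identify as the main obstacle, is left as an assertion (``a direct computation \ldots shows \ldots''); nothing rules out that your scheme collapses to a uniform rescaling or otherwise yields an Einstein metric, and the appeal to non-transitivity of the $H$-action on the lines is not established. Both difficulties evaporate in the paper's approach, where the metrics are distinguished and shown non-Einstein by the prescribed kernels of their Ricci tensors rather than by any computation with second fundamental forms.
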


\begin{proof}
For $i=1,\ldots,k$, let $f_i$ be a positive semidefinite bilinear form on $\RR^k$ with $(i-1)$-dimensional kernel, and let $T_i$ be a $G$-invariant $(0,2)$-tensor on $M$ corresponding to $f_i$. According to \cite[Theorem 1.1]{Pulemotov}, for each $i=1,\ldots ,k$ there exists a $G$-invariant metric $g_i$ such that $T_i$ is its Ricci tensor.
Then there exists a $G$-invariant orthonormal frame, for which ${\rm Ric}^\sharp(g_i)$ is diagonal.
By Lemma~\ref{lemRicdiagkdim}, there exists a $G$-invariant orthonormal frame, in which ${\rm Ric}^\sharp(g_i)$ has equal elements on its diagonal.
Elements of this frame define the $g_i$-orthogonal decomposition $TM = \mD^i_1 \oplus \ldots \oplus \mD^i_k$ with one-dimensional distributions $\mD^i_1 , \ldots , \mD^i_k$,
and $g_i$ is critical for the Einstein-Hilbert action with respect to variations adapted to this decomposition.
For $i \geq 2$, the Ricci tensor of $g_i$ has non-trivial kernel, hence is not proportional to $g_i$,
so the metric obtained in this case is non-Einstein.
\end{proof}

\begin{Remark} \label{remarkEinsteinmetricconnection} \rm
According to \cite[Eq.~(17.10)]{ap}, the Euler-Lagrange equations of the action \eqref{Eq-EH} for variations of metric are those of the action \eqref{EHonlyg} with ${\rm Ric}$ replaced by the Ricci tensor $\overline{\rm Ric}$ of connection $\bar\nabla=\nabla+\I$.
Hence, similarly to \cite[Proposition 2.3(2)]{RN-21}, for adapted variations of metric, the Euler-Lagrange equation for the action \eqref{Eq-EH} is \eqref{E-3dim} with $\overline{\rm Ric}$ instead of ${\rm Ric}$. Action \eqref{Eq-Smix-g} for all distributions one-dimensional becomes, up to constant factor, \eqref{Eq-EH}, so in this case $\overline\Ric_{\,\mD} = \overline{\rm Ric}$ in \eqref{E-geom}.

Solutions of \eqref{E-3dim} given in Examples~\ref{ex3dim}, \ref{ex4dim} and Lemma~\ref{lemRicdiagkdim} require only constant scalar curvature and symmetry of the Ricci tensor, and therefore can be generalized to the case of metric-compatible connections (which always have symmetric Ricci tensors) with constant scalar curvature -- as long as those connections satisfy conditions of Theorem \ref{corI}, which in this case (all distributions are one-dimensional) reduce to:
\begin{equation}\label{E-connection2}
 \< ( \I - \I^\wedge )_Y Z , X \> =0\,,
\end{equation}
if each of $X,Y,Z$ belongs to a different distribution among $\mD_1 , \ldots , \mD_k$.
In particular, metric-compatible connections with $\I = \I^\wedge$ satisfy \eqref{E-connection2} for all decompositions
\eqref{Eq-mD-k-TM}.
\end{Remark}

\section{Critical metrics and statistical connections}
\label{sec:cont-stat}

In this part, we use the results of Sections~\ref{sec:adapted-metric} and \ref{sec:contorsion}
and study Euler-Lagrange equations of the action \eqref{Eq-Smix-g} with variations of both $g$ and $\I$,
i.e., vanishing of partial gradients $\delta_g \bar J_\mD = \lambda\,g$ and $\delta_\I\bar J_\mD=0$, see \eqref{E-delta-g-J} and \eqref{E-delta-I-J}, for statistical connections.
In Section~\ref{sec:contorsion-statistical} we study variations of $\I$ on locally twisted products.
In Section~\ref{sec:metric-stat} we consider variations of $\I$ among tensors corresponding to statistical connections, which give more 
possibilities for critical~points.

\subsection{Twisted products}
\label{sec:contorsion-statistical}

In this section, we consider $(M, g;\mD_1,\ldots,\mD_k)$ with $k>2$ and adapted metric such that
all distributions
are pairwise mixed totally geodesic and pairwise mixed integrable.
By Corollary~\ref{corstatcritforall}, for critical pairs $(g,\I)$ all distributions $\mD_\mu$
are also totally umbilical and integrable, which together with the above assumption gives us locally twisted products, see~\cite{MRS-99}.

Let $(M_1,g_{1}),\ldots,(M_{k},g_{k})$ be pseudo-Riemannian manifolds, and $n_\mu=\dim M_\mu$.
A \textit{twisted product} is the product $M=M_1\times\ldots\times M_k$ with the metric $g=u_1^2\,g_{1}\oplus\ldots\oplus u_k^2\,g_{k}$, where $u_\mu$ for $\mu\ge 1$ are smooth positive functions on $M$, and $u=(u_1,\ldots,u_k)$ is called a twist function.
The 
submanifolds
tangent to ${\cal D}_\mu\ (\mu\ge1)$ are totally umbilical with the mean curvature vectors
 $H_\mu=-n_\mu P^\bot_\mu\nabla(\log u_\mu)$.
If $u$ is independent on $M_2,\ldots, M_{k}$ and $u_1\equiv1$, then we get a \textit{warped product}, see e.g., \cite{Dimitru}. 
%
By Proposition \ref{integrableDperp} below, all distributions tangent to the factors of the twisted product are pairwise mixed totally geodesic and mixed integrable. 
Even if $\mD_\mu\ (1\le \mu\le k)$ are totally geodesic and integrable, they may not be pairwise mixed integrable,
e.g., when $\mD_\mu\ (\mu=1,2,3)$ are one-dimensional distributions on $SU(2)$ defined by the standard basis of its Lie algebra.
On the other hand, we get the following consequence of \cite[Theorem~1]{RS-99}:

\begin{Proposition} \label{integrableDperp}
Let $g$ be an adapted metric on $(M;\mD_1,\ldots,\mD_k)$ such that all $\mD_\mu^\perp$ are integrable.
Then all $\mD_\mu$ are integrable, pairwise mixed integrable and pairwise mixed totally geodesic.
\end{Proposition}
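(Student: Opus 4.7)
The plan is to derive all three conclusions directly from the Koszul formula together with the hypothesis that each $\mD_\mu^\perp = \bigoplus_{\nu\ne\mu}\mD_\nu$ is closed under the Lie bracket. The argument can be arranged in three steps, each reusing the previous one, with no appeal to [RS-99] needed beyond the self-contained computation sketched below.

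First I would verify integrability of each individual $\mD_\mu$. For $X,Y \in \mD_\mu$ one has $X,Y \in \mD_\nu^\perp$ for every $\nu\ne\mu$, so integrability of $\mD_\nu^\perp$ forces $[X,Y]\in\mD_\nu^\perp$; intersecting over all such $\nu$ gives $[X,Y]\in\bigcap_{\nu\ne\mu}\mD_\nu^\perp=\mD_\mu$. Next I would establish pairwise mixed integrability. For $X\in\mD_\mu$, $Y\in\mD_\nu$ with $\mu\ne\nu$, and any third index $\lambda\notin\{\mu,\nu\}$, both $X$ and $Y$ lie in $\mD_\lambda^\perp$, so $[X,Y]$ has vanishing $\mD_\lambda$-component; letting $\lambda$ range over all indices different from $\mu$ and $\nu$ places $[X,Y]$ in $\mD_\mu\oplus\mD_\nu$, which is exactly the vanishing of $T_{\mu\nu}$.

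The last step, pairwise mixed total geodesy, is the one that is not purely set-theoretic. Here I would expand $2\<\nabla_X Y+\nabla_Y X,Z\>$ via the Koszul formula for $X\in\mD_\mu$, $Y\in\mD_\nu$, $Z\in\mD_\lambda$ with $\lambda\notin\{\mu,\nu\}$. Pairwise orthogonality of the distributions kills the three $XYZ$-derivative terms since $\<X,Y\>=\<X,Z\>=\<Y,Z\>=0$. The bracket contribution $\<[X,Y],Z\>$ vanishes by the mixed integrability just established. The two remaining bracket terms $\<[X,Z],Y\>$ and $\<[Y,Z],X\>$ vanish because $X,Z\in\mD_\nu^\perp$ and $Y,Z\in\mD_\mu^\perp$, so by hypothesis $[X,Z]\in\mD_\nu^\perp\perp Y$ and $[Y,Z]\in\mD_\mu^\perp\perp X$. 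Consequently $\nabla_X Y+\nabla_Y X\in\mD_\mu\oplus\mD_\nu$, i.e., $h_{\mu\nu}(X,Y)=0$.

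I do not expect a genuine obstacle: the whole proof is one mechanism (orthogonality plus the chosen integrable complement) applied three times. The only point requiring care is the bookkeeping of which complement $\mD_\lambda^\perp$ is invoked at each stage so that every needed bracket lies in it, and the interlocking of the three claims — the mixed-integrability output must be available before one can knock out $\<[X,Y],Z\>$ in the Koszul expansion for the mixed-totally-geodesic step.
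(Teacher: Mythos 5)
Your proof is correct, and it is organized somewhat differently from the paper's. The paper gets integrability of each $\mD_\mu$ the same way you do (as $\bigcap_{\nu\ne\mu}\mD_\nu^\perp$, an intersection of integrable distributions), but for the mixed conditions it first invokes \cite[Theorem~1]{RS-99} to obtain a local product decomposition $M=M_1\times M_2\times M_3$ and then feeds the Koszul formula vector fields with constant coefficients in product coordinates, so that all Lie brackets vanish outright. You instead keep the Koszul formula but kill each bracket term directly from the hypothesis: $\<[X,Z],Y\>=0$ because $X,Z\in\mD_\nu^\perp$ and $\mD_\nu^\perp$ is integrable, and similarly for the other brackets, with your mixed-integrability step (itself a pure bracket/intersection argument) disposing of $\<[X,Y],Z\>$. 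This makes the proof self-contained — no de Rham--type decomposition theorem is needed — at the cost of nothing; in fact your computation shows the stronger pointwise statement $\<\nabla_XY,Z\>=0$ for $X,Y,Z$ in three distinct distributions, which yields mixed integrability and mixed total geodesy simultaneously. The one point to state carefully (which you partly flag) is that the derivative terms $X\<Y,Z\>$, etc., vanish because you extend $X,Y,Z$ to local \emph{sections} of $\mD_\mu,\mD_\nu,\mD_\lambda$, so that the inner products are identically zero functions, not merely zero at a point; since $h_{\mu\nu}$ and $T_{\mu\nu}$ are tensorial, such extensions are legitimate.
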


\begin{proof}
Since $\mD_\mu=\bigcap_{\,\nu\ne\mu}\mD_\nu^\bot$, each distribution $\mD_\mu$ is integrable
as the intersection of integrable distributions.
Without loss of generality, we can consider case $k=3$.
By \cite[Theorem 1]{RS-99}, $M$ is locally diffeomorphic to the product of neighborhoods in integral manifolds of $\mD_\mu$.
Hence, locally, $M = M_1 \times M_2 \times M_3$, where $TM_\mu=\mD_\mu$ for $\mu=1,2,3$.
Using the Koszul formula (expression of $\nabla$
explicitly in terms of the Riemannian metric)
\[
 2 \< \nabla_X Y, Z\> = X (\<Y,Z\>) + Y (\<X,Z\>) - Z (\<Y,X\>) + \< [X,Y], Z\> - \< [Y,Z], X\> - \< [X,Z], Y\>\,,
\]
one can show that any pair, e.g., $\mD_1$ and $\mD_2$, is mixed totally geodesic and mixed integrable,~i.e.,
\[
\< P_3 \nabla_{P^\perp_3 X}\, P^\perp_3 Y,\, Z\> = 0,\quad X \in TM_1,\quad Y \in TM_2,\quad Z \in TM_3,
\]
by extending $X,Y,Z$ to 
vector fields tangent to integral manifolds of $\mD_1 , \mD_2 , \mD_3$, with constant coefficients in some coordinate system.
\end{proof}

Thus, all results in this section apply to decompositions \eqref{Eq-mD-k-TM} with all $\mD_\mu^\perp$ integrable.
%
%
%
%
In the next lemma, we find variations of each term
of $2\,\bar Q(\mD_\nu,g_t,\I)$ in \eqref{E-barQ} for statistical connection on a manifold with pairwise mixed totally geodesic and pairwise mixed integrable distributions.

\begin{Lemma}\label{propdeltaQforstatistical}
Let a contorsion tensor $\I$ of a statistical connection be critical for the action \eqref{Eq-Smix-g} with fixed adapted metric $g$ on $(M;\mD_1,\ldots,\mD_k)$, and let all $\mD_\mu$ be pairwise mixed totally geodesic and pairwise mixed integrable.
Then for $\mD_\mu$-variations of metric we have
\begin{eqnarray*}
 &&\hskip-6mm 2\,{\delta_g\bar Q}_\mu({E}_{\mu,a},{E}_{\mu,b}) {=}
 \<\I_{{\mu,a}}, \I_{{\mu,b}}\>_{\,|\mD_\mu^\bot} {-}\<\I_{{\mu,a}}{E}_{\mu,b}, \tr_{\,\mu}^\bot\I\> {-}2\,\<(\tilde A_\mu)_{{E}_{\mu,a}},\I_{{\mu,b}}\>
 {+} 2\< h_\mu({E}_{\mu,b},\,\cdot), \I_{{\mu,a}}\>\nonumber \\
 && -\,\<\I_{{\mu,b}}{E}_{\mu,a}, H_\mu - \tilde H_\mu\> -\<\tr_{\,\mu}^\top\I, {E}_{\mu,a}\>\< \tilde H_\mu, {E}_{\mu,b}\>
 +\<\tr_{\,\mu}^\bot\I, {E}_{\mu,b}\> \<\tilde H_\mu, {E}_{\mu,a}\> \nonumber \\
 && +\sum\nolimits_{\,\nu\ne \mu}\big(\<\I_{{\mu,a}}, \I_{{\mu,b}}\>_{\,|\mD_\nu} -\<\I_{{\mu,a}}{E}_{\mu,b}, \tr_{\,\nu}^\top\I\> -2\,\<(A_\nu)_{{E}_{\mu,a}}, \I_{{\mu,b}}\>
+2\,\< \tilde h_\nu({E}_{\mu,b},\,\cdot),\, \I_{{\mu,a}}\>\nonumber \\
&& -\,\<\I_{{\mu,b}}{E}_{\mu,a}, \tilde H_\nu - H_\nu\> {-}\<\tr_{\,\nu}^\bot\I, {E}_{\mu,a}\>\< H_\nu, {E}_{\mu,b}\> {+}\<\tr_{\,\nu}^\top\I, {E}_{\mu,b}\> \<H_\nu, {E}_{\mu,a}\>\big) .
\end{eqnarray*}
\end{Lemma}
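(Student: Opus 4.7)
The plan is to compute the $t$-derivative of $\sum_\nu 2\,\bar Q(\mD_\nu, g_t, \I)$ term by term and match its coefficient of $B_\mu(E_{\mu,a}, E_{\mu,b})$ with the stated tensor $2\,{\delta_g\bar Q}_\mu$. Since $\bar\nabla$ is statistical, \eqref{E-barQ} collapses (by the Remark following Proposition \ref{P-decomp2}) to $2\,\bar Q(\mD_\nu, g, \I) = 2\,\<\tr_\nu^\top \I, \tr_\nu^\bot \I\> - \<\I, \I\>_{\,|V_\nu}$, and because $\I$ is held fixed, only the $g$-dependence of the pairings contributes to $\dt$.

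First, I would derive the variational formulas analogous to Lemma \ref{P-dT-3} from scratch in this setting: for $\nu=\mu$ the $g_t$-orthonormal frame on $\mD_\mu$ itself moves, producing the factors $B_\mu(E_{\mu,a},E_{\mu,b})$; for $\nu\ne\mu$ the frames on $\mD_\nu$ and on $\mD_\nu^\bot$ stay fixed, but the $\mD_\mu$-components of $V_\nu$ and of the traces $\tr_\nu^\top\I$, $\tr_\nu^\bot\I$ still depend on the metric restricted to $\mD_\mu$. The standard tool here is the identity $\dt(g^{-1}) = -g^{-1} B\, g^{-1}$ applied blockwise, together with \eqref{E-dotvolg}. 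After regrouping, the resulting intermediate formula coincides with the general statistical expression in Proposition \ref{C-vr-terms}: it still carries terms built from $T_\nu,\tilde T_\nu,T_\nu^\sharp,\tilde T_\nu^\sharp$.

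The final simplification uses the geometric hypotheses to eliminate all $T$- and $\tilde T$-terms. For $T_\nu$: if $n_\nu>1$, the critical-$\I$ assumption via Corollary \ref{corstatcritforall} gives integrability of $\mD_\nu$, hence $T_\nu=0$; if $n_\nu=1$, skew-symmetry on a line forces $T_\nu=0$. For $\tilde T_\nu$: decompose $X,Y\in\mD_\nu^\bot$ according to $\mD_\nu^\bot=\bigoplus_{\rho\ne\nu}\mD_\rho$ and expand $\tilde T_\nu(X,Y)=\tfrac12 P_\nu[X,Y]$; the same-index contributions vanish by integrability of each $\mD_\rho$, while cross-index contributions vanish because pairwise mixed integrability of $(\mD_\rho,\mD_\xi)$ yields $P_\sigma[X,Y]=0$ for every $\sigma\notin\{\rho,\xi\}$, in particular for $\sigma=\nu$ (which is automatic since $\rho,\xi\ne\nu$). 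Hence $T_\nu=0=\tilde T_\nu$ for every $\nu$, and the stated simpler formula drops out.

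The main technical obstacle is the cross-index bookkeeping in the intermediate step: for each coefficient of $B_\mu(E_{\mu,a},E_{\mu,b})$ several contributions from different values of $\nu$ must be carefully collected and combined, using $\mD_\mu^\bot=\bigoplus_{\nu\ne\mu}\mD_\nu$, the relation $\tr_\mu^\bot\I=\sum_{\nu\ne\mu}\tr_\nu^\top\I$, and orthonormality across distributions. This is essentially the $k>2$ adaptation of the $k=2$ computation in \cite[Lemma~3]{rz-3}, and although routine in character, it is the place where a term is most easily lost or double-counted.
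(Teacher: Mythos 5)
Your reduction of the problem at the outset contains a genuine gap. You propose to differentiate the collapsed statistical form $2\,\bar Q(\mD_\nu,g,\I)=2\,\<\tr_\nu^\top\I,\tr_\nu^\bot\I\>-\<\I,\I\>_{|V_\nu}$, arguing that since $\I$ is fixed only the $g$-dependence of the pairings contributes. But that collapse uses $\I^*=\I$, and $\I^*$ is the $g$-adjoint of $\I$: along the variation $g_t$ the tensor $\I$ stays fixed while $\I^{*_t}$ drifts away from $\I$, so the collapsed form agrees with the full expression \eqref{E-barQ} only at $t=0$, not in a neighbourhood, and their $t$-derivatives differ. Concretely, the terms $\<\I-\I^*+\I^\wedge-\I^{*\wedge},\,\tilde A_\mu-\tilde T_\mu^\sharp+A_\mu-T_\mu^\sharp\>$ and $\<\tr_\mu^\top(\I-\I^*)-\tr_\mu^\bot(\I-\I^*),\,H_\mu-\tilde H_\mu\>$ vanish at $t=0$ (since $\Theta=0$ and $\I-\I^*=0$ there) but have nonzero derivatives coming from $\dt\I^{*_t}$; these are precisely the source of the terms $-2\<(\tilde A_\mu)_{E_{\mu,a}},\I_{\mu,b}\>$, $+2\<h_\mu(E_{\mu,b},\cdot),\I_{\mu,a}\>$, $-\<\I_{\mu,b}E_{\mu,a},H_\mu-\tilde H_\mu\>$ and their $\nu\ne\mu$ duals in the target formula. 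Your mechanism (blockwise $\dt(g^{-1})=-g^{-1}Bg^{-1}$ applied to the two surviving pairings) cannot produce any term containing $h_\mu$, $\tilde A_\mu$, $A_\nu$, $\tilde h_\nu$ or the mean curvatures, so roughly two thirds of the claimed expression would be missing; your assertion that the intermediate result "coincides with the general statistical expression in Proposition~\ref{C-vr-terms}" is therefore not reachable by the route you describe. The paper instead varies the full expression \eqref{E-barQ} term by term (via the $k=2$ formulas of \cite[Eqs.~(67)--(75)]{rz-3}, as in Lemma~\ref{P-dT-3}), which is unavoidable here.

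Two smaller remarks. First, your elimination of the $T$- and $\tilde T$-terms is correct and matches the paper: $T_\xi=0$ from Corollary~\ref{corstatcritforall} (or trivially when $n_\xi=1$), and $\tilde T_\xi=0$ from integrability of each factor together with pairwise mixed integrability. Second, the paper's computation also leans on claim~\ref{item4} of Corollary~\ref{corstatcritforall} ($\<\I_XY,Z\>=0$ when $X,Y,Z$ lie in three distinct distributions) to kill cross-distribution contributions in the sums; you should invoke this explicitly rather than subsuming it under general "bookkeeping".
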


\begin{proof}
From the last claim of Corollary~\ref{corstatcritforall} it follows that $\< \I_X Y,Z\>=0$ if each of $X,Y,Z$ belongs to a different distribution.
From \cite[Eqs.~(67)--(69)]{rz-3} we get, respectively:
\begin{eqnarray*}
\dt \sum\nolimits_{\xi} \< \I^*, \I^\wedge\>_{| V_\xi}
%
%
 &=& -2 \sum\nolimits_{\nu\ne\mu} \sum {B}(E_{\mu, i}, E_{\mu, j}) \< \I_{{\mu,i}} E_{\nu,a}, E_{\nu,b}\> \< E_{\nu,b}, \I_{\nu,a} E_{\mu,j}\> \\
&& -2 \sum\nolimits_{\nu\ne\mu} \sum {B}(E_{\mu, i}, E_{\mu, j}) \< \I_{{\mu,i}} E_{\nu,a}, E_{\mu,b}\> \< E_{\mu,b}, \I_{\nu,a} E_{\mu,j}\>\,, \\
\dt\sum\nolimits_{\xi} \< \Theta, A_\xi\>
%
 &=& -4\sum\nolimits_{\nu\ne\mu} \sum {B}(E_{\mu, i}, E_{\mu, j}) \< h_\nu (E_{\nu,a}, E_{\nu,b}), E_{\mu,i}\>
\< E_{\mu,j}, \I_{\nu,a} E_{\nu,b}\>,\\
 \dt \sum\nolimits_{\xi} \< \Theta, T^\sharp_\xi\>
&=& -4 \sum\nolimits_{\nu\ne\mu} \sum {B}(E_{\mu, i}, E_{\mu, j}) \< T_\nu (E_{\nu,a}, E_{\nu,b}), E_{\mu,i}\>
\< E_{\mu,j}, \I_{\nu,a} E_{\nu,b}\> .
\end{eqnarray*}
For critical statistical connections the above $\dt \< \Theta, {T}^\sharp_\xi\> =0$, because $T_\xi =0$ by Corollary \ref{corstatcritforall}.

Similarly, from \cite[Eq.~(70)]{rz-3} it follows that 
$\dt \< \Theta, {\tilde T}^\sharp_\xi \> =0$, because 
all ${\tilde T}_\xi =0$, as all $\mD_\mu$ are integrable and pairwise mixed integrable. 

From \cite[Eq.~(71)]{rz-3} for statistical connections:
\begin{eqnarray*}
\dt \sum\nolimits_{\xi} \< \Theta, {\tilde A}_\xi\>
 &=& 4 \sum\nolimits_{\nu\ne\mu} \sum {B}(E_{\mu, i}, E_{\mu, j}) \< h_\mu (E_{\mu,k}, E_{\mu,j} ), E_{\nu,a}\> \< E_{\nu,a}, \I_{{\mu,k}}E_{\mu,i}\> \,.
\end{eqnarray*}
From \cite[Eqs.~(72)--(75)]{rz-3} for statistical connections, we obtain the following:
\begin{eqnarray*}
 && \dt \< \tr^\top_\nu \I, \tr^\perp_\nu \I^*\> = 0\,,\\
 && \dt \sum\nolimits_{\xi} \< \tr^\perp_\xi \I^*, \tr^\perp_\xi \I\>
= -2\sum\nolimits_{\nu\ne\mu} \sum {B}(E_{\mu, i}, E_{\mu, j}) \< \I_{\mu, j} E_{\mu,i}, \tr^\top_\nu \I\>, \\
 && \dt \sum\nolimits_{\xi} \< \tr^\top_\xi (\I^* - \I ), {\tilde H}_\xi - H_\xi\> =
 \sum {B}(E_{\mu, i}, E_{\mu, j}) \big(\< \tr^\perp_\mu \I, E_{\mu,i}\> \< E_{\mu,j}, {\tilde H}_\mu\> \\
&&\quad +
 \sum\nolimits_{\nu\ne\mu}\< \tr^\top_\nu \I, E_{\mu,j}\> \< E_{\mu,i}, H_\nu\> \big),\\
 && \dt \sum\nolimits_{\xi} \< \tr^\perp_\xi (\I^* - \I ), {\tilde H}_\xi - H_\xi\>
 = \sum {B}(E_{\mu, i}, E_{\mu, j})\big(\sum\nolimits_{\nu\ne\mu}(\< \I_{\mu,j} E_{\mu,i}, {\tilde H}_\nu - H_\nu\> \\
 &&\quad +\, \< \tr^\perp_\nu \I, E_{\mu,i}\> \< H_\nu, E_{\mu,j}\> )
 +\< \I_{\mu,j} E_{\mu,i}, H_\mu - {\tilde H}_\mu\> + \< \tr^\top_\mu \I, E_{\mu,i}\> \< {\tilde H}_\mu, E_{\mu,j}\> \big),
\end{eqnarray*}
respectively, and that completes the proof.
\end{proof}

Put $\I_{ Z }^\flat (X,Y) = \< \I_{ Z}X, Y\>$.
In the next theorem we find
the Euler-Lagrange equation \eqref{E-delta-g-J} under our assumptions about distributions, for statistical connections, and with \eqref{E-delta-I-J} satisfied. This result will be improved in further corollaries, according to specific dimensions of the distributions.

\begin{Theorem}
Let $g$ be an adapted metric on $(M;\mD_1,\ldots,\mD_k)$ such that all $\mD_\mu$ are pairwise mixed totally geodesic and pairwise mixed integrable.
Then a pair $(g,\I)$, where $\I$ is the contorsion tensor of a statistical connection on $(M,g)$, is critical for the action \eqref{Eq-Smix-g} with respect to adapted variations of $g$ preserving the volume of $(M,g)$ and all variations of $\I$ if and only if
$(M,g)$ is locally a~twisted product, all conditions of Corollary~\ref{corstatcritforall} hold and the following Euler-Lagrange equations \eqref{E-delta-g-J} are valid:
\begin{eqnarray} \label{ELmixedgeodesicmixedintegrable}
 && \big( {\bar S}_{\mD_1 \ldots \mD_k}
 +\Div \big(\big(1 - \frac{2}{n_\mu}\, \big) H_\mu - {\tilde H}_\mu \big) + \lambda\big) g_\mu
 - {\tilde H}_\mu^\flat \otimes {\tilde H}_\mu^\flat
 +\,\I_{{\tilde H}_\mu}^\flat
 \nonumber \\
 && +\sum\nolimits_{\nu\ne\mu} \big(\frac{2}{n_\nu} - 1 \big) (P_\mu H_\nu)^\flat \otimes (P_\mu H_\nu)^\flat = 0,\quad
 \mu = 1,\ldots, k\,.
\end{eqnarray}
\end{Theorem}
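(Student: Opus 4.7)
I argue both directions, reducing to the Euler--Lagrange equations of Theorem~\ref{T-main01} and Corollary~\ref{corstatcritforall}. For necessity, assume $(g,\I)$ is critical. Criticality for all variations of $\,\I$ within the statistical class forces $\I$ to satisfy items 1--5 of Corollary~\ref{corstatcritforall}, and Theorem~\ref{corI} gives that every $\mD_\mu$ is totally umbilical. Combining item~1 of Corollary~\ref{corstatcritforall} (each $\mD_\mu$ with $n_\mu>1$ is integrable and totally geodesic; $\mD_\mu$ with $n_\mu=1$ is automatically integrable) with the hypothesis of pairwise mixed total geodesicity and pairwise mixed integrability, the standard characterization (see~\cite{MRS-99}) yields that $(M,g)$ is locally a twisted product.

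Next I derive \eqref{ELmixedgeodesicmixedintegrable} from the Euler--Lagrange equation \eqref{ElmixDDvp-b}. Since $\I=\I^*$ for statistical connections, the trace pieces of $\I-\I^*$ vanish, so the sum inside the divergence in \eqref{ElmixDDvp-b} reduces to $\sum_\nu(H_\nu+\tH_\nu)=2\,{\cal H}$ by \eqref{defcalH}, making the coefficient of $g_\mu$ equal to $\overline{\rm S}_{\,\mD_1,\ldots,\mD_k}-\Div{\cal H}+\lambda$. In the twisted-product setting $T_\mu=0$, $\tilde T_\mu=0$, $h_\mu=(H_\mu/n_\mu)\,g_\mu$, and on $\mD_\mu\times\mD_\mu$ one has $\tilde h_\nu(X,Y)=(P_\mu H_\nu/n_\nu)\<X,Y\>$; moreover $H_\mu=\tH_\mu=0$ whenever $n_\mu>1$. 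Substituting these into $\delta Q_\mu$ from Proposition~\ref{C-vr-terms}, the divergence pieces $-\Div h_\mu$, $(\Div H_\mu)\,g_\mu$, $-\sum_{\nu\ne\mu}\Div\tilde h_\nu|_{\mD_\mu}$ and $\sum_{\nu\ne\mu}(\Div\tH_\nu)\,g_\mu$ collect into $\Div\bigl((1-2/n_\mu)H_\mu-\tH_\mu\bigr)\,g_\mu$, while the quadratic contributions $-\tH_\mu^\flat\otimes\tH_\mu^\flat$ and $-\sum_{\nu\ne\mu}(P_\mu H_\nu)^\flat\otimes(P_\mu H_\nu)^\flat$ combine with the $\tfrac12\Upsilon_{P_\mu h_\nu,P_\mu h_\nu}$ terms to yield the coefficient $2/n_\nu-1$ in front of each $(P_\mu H_\nu)^\flat\otimes(P_\mu H_\nu)^\flat$. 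For $\delta_g\bar Q_\mu$ I invoke Lemma~\ref{propdeltaQforstatistical}: the vanishing identities $\tr_\mu^\bot\I=\tr_\mu^\top\I=0$, $\<\I_XY,Z\>=0$ for $X,Y\in\mD_\mu$, $Z\in\mD_\mu^\bot$, and the three-distribution identity, together with $H_\mu=0$ for $n_\mu>1$, kill nearly every term and leave only the contribution that assembles to $\I^\flat_{\tH_\mu}$. Plugging all of this back into \eqref{ElmixDDvp-b} produces exactly \eqref{ELmixedgeodesicmixedintegrable}.

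For sufficiency, the stated hypotheses make all of the above simplifications applicable, so \eqref{ELmixedgeodesicmixedintegrable} is equivalent to \eqref{ElmixDDvp-b}, giving criticality for volume-preserving adapted variations of $g$; Corollary~\ref{corstatcritforall} directly gives $\I$-criticality. The main obstacle will be the careful bookkeeping in the reduction of $\delta_g\bar Q_\mu$ via Lemma~\ref{propdeltaQforstatistical} and in the recombination of the divergence terms of $\delta Q_\mu$: numerous $\mu$-only and $\nu\ne\mu$ contributions must cancel or merge with the correct signs to produce precisely the coefficients $1-2/n_\mu$ and $2/n_\nu-1$ appearing in \eqref{ELmixedgeodesicmixedintegrable}.
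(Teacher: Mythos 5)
Your proposal follows essentially the same route as the paper: obtain the structural conditions (total umbilicity from Theorem~\ref{corI}, integrability and the trace/vanishing conditions from Corollary~\ref{corstatcritforall}, hence a local twisted product), compute $\delta Q_\mu$ under these hypotheses using $\frac12\Upsilon_{h_\nu,h_\nu}=\frac{1}{n_\nu}H_\nu^\flat\otimes H_\nu^\flat$ and the $\Div\tilde h_\nu$ identity, reduce $\delta_g\bar Q_\mu$ via Lemma~\ref{propdeltaQforstatistical} to the single surviving term $\I^\flat_{\tilde H_\mu}$, and substitute into \eqref{ElmixDDvp-b}. One bookkeeping slip: the divergence pieces of $\delta Q_\mu$ alone collect into $\Div\big((1-\frac{2}{n_\mu})H_\mu+\sum_{\nu\ne\mu}\tilde H_\nu\big)g_\mu$, not $\Div\big((1-\frac{2}{n_\mu})H_\mu-\tilde H_\mu\big)g_\mu$; the latter only emerges after absorbing the $-\Div{\cal H}$ from the coefficient of $g_\mu$ via ${\cal H}=\sum_\nu\tilde H_\nu$, so as written you subtract $\Div{\cal H}$ twice. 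This is easily repaired and does not affect the final equation you state.
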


\begin{proof}
For totally umbilical, pairwise mixed totally geodesic distributions we obtain
\begin{eqnarray*}
\frac12\,\Upsilon_{h_\nu,h_\nu} = \frac{1}{n_\nu}\,H_\nu^\flat \otimes H_\nu^\flat,\quad
\frac12\,\Upsilon_{{\tilde h}_\mu, {\tilde h}_\mu}= \sum\nolimits_{\nu\ne\mu} \frac{1}{n_\nu}\,(P_\mu H_\nu)^\flat \otimes (P_\mu H_\nu)^\flat\, .
\end{eqnarray*}
For $\mu = 1,\ldots, k$ and $X,Y \in \mD_\mu \ne \mD_\nu$, we have
$( \Div {\tilde h}_\nu )(X,Y) = \frac{1}{n_\mu}\,\<X,Y\>\, \Div( P_\nu H_\mu )$,
and thus
 $\sum\nolimits_{\nu\ne\mu} ( \Div {\tilde h}_\nu )(X,Y) = \frac{1}{n_\mu}\,\<X,Y\>\,\Div H_\mu$. 
For totally umbilical, pairwise mixed totally geodesic, integrable and pairwise mixed integrable distributions we~get
\[
\delta Q_\mu
%
%
%
= \Div\big(\big(1 -\frac{2}{n_\mu}\big) H_\mu +\sum\nolimits_{\nu\ne\mu}{\tilde H}_\nu\big) g_\mu - {\tilde H}_\mu^\flat \otimes {\tilde H}_\mu^\flat
 + \sum\nolimits_{\nu\ne\mu} \big(\frac{2}{n_\nu} - 1\big)\,(P_\mu H_\nu)^\flat \otimes (P_\mu H_\nu)^\flat\, .
\]
For such distributions, by the above and Lemma~\ref{propdeltaQforstatistical},
the Euler-Lagrange equations for the action \eqref{Eq-Smix-g}
for all variations of $\I$ and adapted variations of $g$ preserving the volume of $\Omega$ are
\begin{eqnarray*}
&& 2\,\< X,Y\> \Div\big(\big(1 -\frac{2}{n_\mu}\big) H_\mu +\sum\nolimits_{\nu\ne\mu}{\tilde H}_\nu \big) -2\,\< {\tilde H}_\mu, X\>\<{\tilde H}_\mu, Y\> \\
&& +\,2 \sum\nolimits_{\nu\ne\mu} \Big[ \big(\frac{2}{n_\nu} -1 \big) \< H_\nu, X \> \< H_\nu, Y \> \\
%
%
&& +\,2
\sum \big(\< \I_{X} E_{\nu,a}, E_{\nu,b}\> \< E_{\nu,b}, \I_{\nu,a} Y\>
 +\<\I_{X} E_{\nu,a}, E_{\mu,b}\> \< E_{\mu,b}, \I_{\nu,a} Y\>\big) \\
&& -\,2
\frac{1}{n_\nu}\,\big(\< H_\nu, X\> \< Y, \tr^\top_\nu \I\> +
 \< H_\nu, Y\> \< X, \tr^\top_\nu \I\>\big) \\
&& +\,
\frac{4}{n_\mu} \< P_\nu H_\mu, \I_{ X}Y\>
 -2\,
\< \I_{Y}{X}, \tr^\top_\nu \I\> \\
&& +\,\frac{1}{2}\,
(\< \tr^\top_\nu \I, Y\> \< X, H_\nu\> + \< \tr^\top_\nu \I, X\> \< Y, H_\nu\> ) \\
&& -\,
 \big(\< \I_{Y} X, {\tilde H}_\nu - H_\nu\>
 - \frac{1}{2}\,\<\tr^\perp_\nu \I, X\> \<H_\nu, Y\> - \frac{1}{2}\,\<\tr^\perp_\nu \I, Y\> \<H_\nu, X\> \big) \Big] \\
&& +\,\frac{1}{2}\,(\< \tr^\perp_\mu \I, X\> \< Y, {\tilde H}_\mu\> + \< \tr^\perp_\mu \I, Y\> \< X, {\tilde H}_\mu\>) \\
&& -\,\<\I_{Y}X, H_\mu -{\tilde H}_\mu\> +\frac{1}{2}\,\<\tr^\top_\mu\I, X\>\<{\tilde H}_\mu, Y\> +\frac{1}{2}\,\<\tr^\top_\mu\I, Y\>\<{\tilde H}_\mu, X\> \\
&& +\,2\,( \bar{\rm S}_{\mD_1 \ldots \mD_k} - \Div {\cal H} + \lambda ) \< X,Y\> =0,\qquad X,Y \in \mD_\mu ,\quad \mu=1, \ldots, k\,,
\end{eqnarray*}
with ${\cal H}$ given by \eqref{defcalH}. 
By claim 5 of Corollary~\ref{corstatcritforall}, $\< \I_X Y, Z\> =0$ when $X,Y,Z$ are not all from the same distribution; using claim 4 of Corollary~\ref{corstatcritforall}, \eqref{calH} and $H_\mu = \sum\nolimits_{\nu\ne\mu} P_\nu H_\mu$, we reduce the above Euler-Lagrange equations 
to the following:
\begin{eqnarray*}
&& \big(\bar{\rm S}_{\mD_1 \ldots \mD_k} +\Div \big( \big(1 - \frac{2}{n_\mu} \big) H_\mu - {\tilde H}_\mu \big) + \lambda \big)\< X,Y\> \\
&& + \sum\nolimits_{\nu\ne\mu} \big(\frac{2}{n_\nu} - 1 \big) \< X, H_\nu\> \<Y, H_\nu\> - \< {\tilde H}_\mu, X\> \< {\tilde H}_\mu, Y\>
%
+\,\< \I_{Y} X, {\tilde H}_\mu\> = 0\,,
\end{eqnarray*}
for all $X,Y \in \mD_\mu$ and $\mu=1, \ldots, k$,
which 
yields \eqref{ELmixedgeodesicmixedintegrable}.
\end{proof}

The next corollaries consider alternatives for the number $j$ of one-dimensional distributions among our $k$ distributions:
at most one distribution is one-dimensional (i.e., $j\le1$) in~Corollary~\ref{C2-sec4},
there are $j$ one-dimensional distributions for some $j\in [2, k-1]$ in Proposition~\ref{coralldim} and Corollary~\ref{coralldimNew}, and all distributions are one-dimensional (i.e., $j=k$) in~Corollary~\ref{C3-sec4}.
The second is the only case when non-trivial metrics (i.e., not metric products) and connections (i.e., not Levi-Civita) can be critical.

\begin{Corollary}\label{C2-sec4}
Let $g$ be an adapted metric on $(M;\mD_1,\ldots,\mD_k)$ such that all $\mD_\mu$ are pairwise mixed totally geodesic and pairwise mixed integrable,
and $n_\mu>1$ for all $\mu\ge2$. Then a pair $(g,\I)$, where $\I$ is the contorsion tensor of a statistical connection on $(M,g)$, is critical for the action \eqref{Eq-Smix-g} with respect to adapted variations of $g$ preserving the volume of $(M,g)$ and all variations of $\I$
if and only if $(M,g)$ is locally a product, i.e., all $\mD_\mu$ are integrable and totally geodesic, and $\I$ is contorsion of any statistical connection satisfying
claims \ref{item2}, \ref{item5} and \ref{item4} of~Corollary~\ref{corstatcritforall}.
\end{Corollary}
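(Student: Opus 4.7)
The plan is to combine Corollary~\ref{corstatcritforall} (necessary conditions from all variations of $\I$) with the reduced metric Euler--Lagrange equation \eqref{ELmixedgeodesicmixedintegrable}, exploiting the dimension hypothesis $n_\mu>1$ for $\mu\ge2$ to upgrade ``partial'' vanishing statements to total geodesy. First, I would apply Corollary~\ref{corstatcritforall}: claim~1 forces each $\mD_\mu$ with $\mu\ge2$ to be integrable and totally geodesic, so $H_\nu=0$ for every $\nu\ge2$; claim~3 gives $\tilde H_\mu=0$ for $\mu\ge2$. Since $\tilde H_\mu=\sum_{\nu\ne\mu}P_\mu H_\nu=P_\mu H_1$ for $\mu\ge2$, this yields $P_\mu H_1=0$ for all $\mu\ne1$; combined with $P_1H_1=0$, one concludes $H_1=0$, and $\mD_1$ is integrable (automatic if $n_1=1$, from claim~1 otherwise). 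Thus all $\mD_\mu$ are integrable and totally geodesic, so $(M,g)$ is locally a metric product, as in the proof of Proposition~\ref{integrableDperp}.

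Second, I would substitute $H_\mu=\tilde H_\mu=0$ for every $\mu$ into \eqref{ELmixedgeodesicmixedintegrable}: the divergence, the term $\tilde H_\mu^\flat\otimes\tilde H_\mu^\flat$, the factor $\I_{\tilde H_\mu}^\flat$, and the sum $\sum_{\nu\ne\mu}(2/n_\nu-1)(P_\mu H_\nu)^\flat\otimes(P_\mu H_\nu)^\flat$ all vanish, leaving
\[
 (\bar{\rm S}_{\mD_1,\ldots,\mD_k}+\lambda)\,g_\mu=0, \quad \mu=1,\ldots,k.
\]
It then remains to show $\bar{\rm S}_{\mD_1,\ldots,\mD_k}\equiv0$ so that $\lambda=0$ is compatible with the constant-Lagrange-multiplier form of the volume-constrained equation. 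From \eqref{eqvarstat},
\[
 2\,\bar{\rm S}_{\mD_1,\ldots,\mD_k}=2\,{\rm S}_{\mD_1,\ldots,\mD_k}-\sum\nolimits_\mu\bigl(\<\tr^\bot_\mu\I,\tr^\top_\mu\I\>-\tfrac12\<\I,\I\>_{|V_\mu}\bigr).
\]
On a local metric product ${\rm S}_{\mD_1,\ldots,\mD_k}=0$; by claim~5 of Corollary~\ref{corstatcritforall} both partial traces vanish; and by claims~2,~4 together with the total symmetry $\<\I_XY,Z\>=\<\I_YX,Z\>=\<\I_XZ,Y\>$ characterizing statistical contorsion, the form $\<\I_XY,Z\>$ vanishes unless $X,Y,Z$ all lie in the same $\mD_\mu$. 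In particular $\I_XY=0$ whenever $X\in\mD_\mu$, $Y\in\mD_\mu^\bot$, which kills $\<\I,\I\>_{|V_\mu}$. Hence $\bar{\rm S}_{\mD_1,\ldots,\mD_k}=0$, and the metric equation is satisfied with $\lambda=0$.

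For the converse, observe that if $(M,g)$ is locally a product and $\I$ fulfills claims~2, 4, 5 of Corollary~\ref{corstatcritforall}, then claim~1 holds trivially and claim~3 follows from $H_\mu=0\Rightarrow\tilde H_\mu=0$, so Corollary~\ref{corstatcritforall} gives criticality of $\I$; the computation above then verifies \eqref{ELmixedgeodesicmixedintegrable} with $\lambda=0$. The main technical point I expect is the symmetry argument that forces $\I$ to be block-diagonal with respect to $TM=\bigoplus\mD_\mu$: this is what reduces $\bar{\rm S}_{\mD_1,\ldots,\mD_k}$ to ${\rm S}_{\mD_1,\ldots,\mD_k}$ and makes the Lagrange multiplier compatible with a pointwise equation.
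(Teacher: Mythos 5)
Your proposal is correct and follows essentially the same route as the paper: it uses Corollary~\ref{corstatcritforall} (claims 1 and 3) to force $H_\mu=0$ for all $\mu$, hence a local metric product, and then observes that \eqref{ELmixededgeodesicmixedintegrable} degenerates to $\lambda=0$; your explicit verification that $\overline{\rm S}_{\,\mD_1,\ldots,\mD_k}=0$ via \eqref{eqvarstat} and the block-diagonality of $\I$ is a detail the paper leaves implicit, but not a different argument.
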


\begin{proof}
By claim \ref{item1} of Corollary~\ref{corstatcritforall}, we get $H_\nu=0$ when $n_\nu >1$.
So if all $n_\nu>1$, then a critical pair $(g,\I)$, where $\I$ is statistical, can exist only when $g$ is the product metric.

Suppose now that $n_1=1$ and $n_\nu>1$ for all $1< \nu \le k$.
Then for all $1< \nu \le k$ we obtain from claim \ref{item3} of Corollary~\ref{corstatcritforall} that $H_\nu=0$ and from
claim \ref{item3} of Corollary~\ref{corstatcritforall} we obtain for all $1< \nu \le k$ that ${\tilde H}_\nu =0 = P_\nu H_1$, and it follows that also $H_1=0$.
Again, we obtain that $g$ is the product metric.
For a metric product of integral manifolds of
$\mD_\mu$, the Euler-Lagrange equations \eqref{ELmixedgeodesicmixedintegrable} all become $\lambda =0$.
In that case, all statistical $\I$ satisfying claims \ref{item2}, \ref{item5} and 5 of Corollary~\ref{corstatcritforall}
are critical (also for variations of metric not preserving the volume of $(M,g)$).
%
\end{proof}

\begin{Proposition}\label{coralldim}
Let $g$ be an adapted metric on $(M;\mD_1,\ldots,\mD_k)$ such that all $\mD_\mu$ are pairwise mixed totally geodesic and pairwise mixed integrable,
and there exists $j\in [2, k-1]$ such that $n_\mu=1$ for $1 \le \mu \le j$ and $n_\rho >1$ for $j+1 \le \rho \le k$.
Then a pair $(g,\I)$, where $\I$ is the contorsion tensor of a~statistical connection,
is critical for the action \eqref{Eq-Smix-g} with respect to adapted variations of $g$ preserving the volume of $(M,g)$ and all variations of $\I$
if and only if
\,$\I$ satisfies claims \ref{item2}, \ref{item5} and 
\ref{item4} of Corollary~\ref{corstatcritforall}
and for all $j+1\le\rho\le k:\,\mD_\rho$ is totally geodesic, ${\tilde H}_\rho=0$ and 
\begin{subequations}
\begin{equation} \label{ELtwistedproductdimbigSmix}
 \sum\nolimits_{\,\nu=1}^j
 (P_\rho H_\nu)^\flat \otimes (P_\rho H_\nu)^\flat + \big( \frac{m\lambda}{2-m} \big) g_\rho =0 \,,
\end{equation}
and for all $1\le\mu\le j$:
\begin{equation}\label{ELtwistedproductdimoneSmix}
{\rm S}_{\,\mD_\mu, \mD_\mu^\perp} = \frac{2\lambda}{2-m},
\end{equation}
\end{subequations}
where $m=\dim M$ and $\lambda$ is a constant.
\end{Proposition}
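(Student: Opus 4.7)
The plan is to specialize the EL system \eqref{ELmixedgeodesicmixedintegrable} of the preceding theorem to the dimension pattern $n_\mu=1$ for $\mu\le j$, $n_\rho>1$ for $\rho>j$. Since $(g,\I)$ is assumed critical for all variations of $\I$, Corollary~\ref{corstatcritforall} applies immediately: each $\mD_\rho$ with $\rho>j$ is integrable and totally geodesic (so $H_\rho=h_\rho=T_\rho=0$) with $\tilde H_\rho=0$; moreover $\tr_\mu^\top\I=\tr_\mu^\bot\I=0$ for every $\mu$, and $\<\I_XY,Z\>$ vanishes whenever $X,Y,Z$ do not all lie in a single distribution. In particular, for a one-dimensional $\mD_\mu$, $\I_{N_\mu}N_\mu=\tr_\mu^\top\I=0$, where $N_\mu$ is a unit spanning field. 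Also, by Proposition~\ref{integrableDperp} applied in reverse (or a direct argument), $\mD_\mu^\bot$ is integrable for each $\mu$, so $\tilde T_\mu=0$.

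For $\mu=\rho>j$, substituting $H_\rho=\tilde H_\rho=0$ and $\I^\flat_{\tilde H_\rho}=0$ into \eqref{ELmixedgeodesicmixedintegrable}, the divergence term disappears and in the remaining sum only $\nu\le j$ survives (since $H_\nu=0$ for $\nu>j$), with coefficient $2/n_\nu-1=1$. The equation reduces to $(\,\overline{\rm S}_{\,\mD_1,\ldots,\mD_k}+\lambda)\,g_\rho+\sum_{\nu=1}^j (P_\rho H_\nu)^\flat\otimes(P_\rho H_\nu)^\flat=0$, which is the tensorial content of \eqref{ELtwistedproductdimbigSmix} once the scalar $\overline{\rm S}_{\,\mD_1,\ldots,\mD_k}+\lambda$ is evaluated. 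For $\mu\le j$, the EL equation is a scalar identity after pairing with $(N_\mu,N_\mu)$: the $\I$-term vanishes, and the nonlinear part equals $\<\tilde H_\mu,N_\mu\>^2-\sum_{\nu\le j,\,\nu\ne\mu}\<H_\nu,N_\mu\>^2=2\sum_{\nu<\sigma}\<H_\nu,N_\mu\>\<H_\sigma,N_\mu\>$, which one recognises as $Q(\mD_\mu,g)$ in our 1-dim setting (using $\|h_\mu\|^2=\|H_\mu\|^2$, $T_\mu=0$, and $\tilde T_\mu=0$, so only the $\tilde H$ and $\tilde h$ norms survive in \eqref{E-func-Q}). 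Applying \eqref{E-div-barQ} and noting that under our constraints $\bar Q(\mD_\mu,g,\I)=0$ (the $\tr\,\I$-terms vanish, and $\<\I,\I\>_{V_\mu}=0$ because $\I^\flat$ is totally symmetric with support only within each $\mD_\mu$), one gets $\overline{\rm S}_{\,\mD_1,\ldots,\mD_k}+\lambda=\overline{\rm S}_{\,\mD_\mu,\mD_\mu^\bot}={\rm S}_{\,\mD_\mu,\mD_\mu^\bot}$.

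The global constant is then pinned down through \eqref{E-Dk-Smix}: tracing the $\rho$-equation gives $\overline{\rm S}_{\,\mD_\rho,\mD_\rho^\bot}=n_\rho\,(\overline{\rm S}_{\,\mD_1,\ldots,\mD_k}+\lambda)$ (after identifying $\overline{\rm S}_{\,\mD_\rho,\mD_\rho^\bot}=-\|\tilde h_\rho\|^2=-\sum_{\nu\le j}\|P_\rho H_\nu\|^2$), and summing over all $\mu$ in $2\,\overline{\rm S}_{\,\mD_1,\ldots,\mD_k}=\sum_\mu\overline{\rm S}_{\,\mD_\mu,\mD_\mu^\bot}$ yields a single linear equation in $\overline{\rm S}_{\,\mD_1,\ldots,\mD_k}$ and $\lambda$ whose unique solution (assuming $m\ne 2$) produces the explicit constants in \eqref{ELtwistedproductdimbigSmix} and \eqref{ELtwistedproductdimoneSmix}. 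Sufficiency follows by running the same identities backwards: starting from the stated algebraic conditions and \eqref{ELtwistedproductdimbigSmix}--\eqref{ELtwistedproductdimoneSmix}, one recovers \eqref{ELmixedgeodesicmixedintegrable} for each $\mu$. The main obstacle is verifying that the long nonlinear part of \eqref{ELmixedgeodesicmixedintegrable} for $\mu\le j$ collapses exactly to $Q(\mD_\mu,g)$ so that the divergence identity \eqref{E-div-barQ} converts a differential condition into a pointwise algebraic one involving ${\rm S}_{\,\mD_\mu,\mD_\mu^\bot}$; this step is where the pairwise mixed-totally-geodesic and mixed-integrable hypotheses, together with the rigid algebraic constraints from Corollary~\ref{corstatcritforall}, really do their work.
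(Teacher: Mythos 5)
Your proposal is correct and follows essentially the same route as the paper: specialize \eqref{ELmixedgeodesicmixedintegrable} using Corollary~\ref{corstatcritforall}, recognize the nonlinear part of the $\mu\le j$ equations as $Q(\mD_\mu,g)$ so that the divergence identity converts them into ${\rm S}_{\,\mD_1,\ldots,\mD_k}+\lambda={\rm S}_{\,\mD_\mu,\mD_\mu^\perp}$, trace the $\rho$-equations, and sum via \eqref{E-Dk-Smix} to fix the constants. The only cosmetic differences are that you obtain the mixed-scalar-curvature identity \eqref{Smixtwproddim1} from \eqref{E-div-barQ} rather than citing it directly, and your appeal to "Proposition~\ref{integrableDperp} in reverse" for $\tilde T_\mu=0$ should be replaced by the direct argument you parenthetically mention.
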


\begin{proof}
From Corollary~\ref{corstatcritforall} we obtain for $j+1 \le \rho \le k$ that $H_\rho=0 = {\tilde H}_\rho = P_\rho {\cal H}$ and thus ${\cal H}=\sum\nolimits_{\,\nu=1}^j H_\nu$.
By 
\eqref{eqvarstat} and Corollary \ref{corstatcritforall}, we get
$\bar{\rm S}_{\,\mD_1 \ldots \mD_k} = {\rm S}_{\,\mD_1 \ldots \mD_k}$.
Hence, the Euler-Lagrange equation \eqref{ELmixedgeodesicmixedintegrable} for all $X,Y \in \mD_\rho$ becomes 
\begin{equation} \label{ELtwistedproductdimbig1}
 \sum\nolimits_{\,\nu=1}^j
 (P_\rho H_\nu)^\flat \otimes (P_\rho H_\nu)^\flat + \big( {\rm S}_{\,\mD_1 \ldots \mD_k} + \lambda \big) g_\rho  =0 \,,
\end{equation}
and for all $1 \le \mu \le j$ we 
have the following form of \eqref{ELmixedgeodesicmixedintegrable}:
\begin{equation}\label{ELtwistedproductdimone1}
- \Div (
H_\mu + {\tilde H}_\mu ) + 
{\rm S}_{\,\mD_1 \ldots \mD_k} + \lambda  
 + \sum\nolimits_{\,\nu=1}^j
\| P_\mu H_\nu\|^2 - \| {\tilde H}_\mu \|^2 = 0\,.
\end{equation}
%
For $1 \le \mu \le j$ we obtain
 $\| {\tilde h}_\mu \|^2
 = \sum\nolimits_{\nu =1}^j \| P_\mu H_\nu \|^2$\,,
and then, using the formula for the mixed scalar curvature of $\mD_\mu$ \cite{Walczak}, we get
\begin{equation} \label{Smixtwproddim1}
 {\rm S}_{\,\mD_\mu, \mD_\mu^\perp} = \Div ( {\tilde H}_\mu + H_\mu ) + \| {\tilde H}_\mu \|^2 - \sum\nolimits_{\,\nu =1}^j \| P_\mu H_\nu \|^2\, ,
\end{equation}
using the above in \eqref{ELtwistedproductdimone1} yields that the Euler-Lagrange equation \eqref{ELmixedgeodesicmixedintegrable} for $1 \leq \mu \leq j$ is
\begin{equation}\label{ELtwistedproductdimone}
 {\rm S}_{\,\mD_1 \ldots \mD_k} + \lambda - {\rm S}_{\,\mD_\mu, \mD_\mu^\perp} =0 .
\end{equation}
For $j+1 \le \rho \le k$ from the formula for the mixed scalar curvature of $\mD_\rho$ we find ${\rm S}_{\,\mD_\rho, \mD_\rho^\perp}= - \sum_{\nu =1}^j \| P_\rho H_\nu \|^2$, hence taking trace of \eqref{ELtwistedproductdimbig1} yields
\begin{equation} \label{trELtwistedproductdimbig1}
( {\rm S}_{\,\mD_1 \ldots \mD_k} + \lambda ) n_\rho - {\rm S}_{\,\mD_\rho, \mD_\rho^\perp} =0 .
\end{equation}
Summing all equations \eqref{ELtwistedproductdimone} for $1 \leq \mu \leq j$ and all equations \eqref{trELtwistedproductdimbig1} for $j+1 \leq \rho \leq k$, and using the analogue of \eqref{E-Dk-Smix} for the Levi-Civita connection: $2 {\rm S}_{\,\mD_1 \ldots \mD_k} = \sum\nolimits_{\mu=1}^j {\rm S}_{\,\mD_\mu, \mD_\mu^\perp} + \sum\nolimits_{\rho=j+1}^k {\rm S}_{\,\mD_\rho, \mD_\rho^\perp}$ \cite{r-EH-k}, we obtain $m ({\rm S}_{\,\mD_1 \ldots \mD_k} + \lambda) - 2{\rm S}_{\,\mD_1 \ldots \mD_k} =0$,  and hence ${\rm S}_{\,\mD_1 \ldots \mD_k} = \frac{m \lambda}{2-m}$, which used in \eqref{trELtwistedproductdimbig1} yields ${\rm S}_{\,\mD_\mu, \mD_\mu^\perp} = \frac{2\lambda}{2-m}$. Using the above in \eqref{ELtwistedproductdimbig1} and \eqref{trELtwistedproductdimbig1} yields
(\ref{ELtwistedproductdimbigSmix},b). On the other hand, it can be verified that if $\I$ satisfies claims \ref{item2}, \ref{item5} and 
\ref{item4} of Corollary~\ref{corstatcritforall}, for all $j+1\le\rho\le k:\,\mD_\rho$ is totally geodesic, ${\tilde H}_\rho=0$ and (\ref{ELtwistedproductdimbigSmix},b) hold, then the Euler-Lagrange equations \eqref{ELmixedgeodesicmixedintegrable} are valid.
\end{proof}

\begin{Remark} \rm
In Proposition \ref{coralldim},
equations (\ref{ELtwistedproductdimbig1},b) are equivalent to, respectively:
\begin{subequations}
\begin{eqnarray}\label{ELtwistedproductdimbigold}
 \sum\nolimits_{\,\nu=1}^j(P_\rho H_\nu)^\flat \otimes (P_\rho H_\nu)^\flat
 +\big( \Div {\cal H} + \frac{1}{2}\,\| {\cal H} \|^2 - \frac{1}{2} \sum\nolimits_{\nu=1}^j \| H_\nu \|^2 + \lambda\big) g_\rho = 0\,, \\
\label{ELtwistedproductdimoneold}
 \sum\nolimits_{\,\nu=1}^j \big(\| P_\mu H_\nu\|^2 - \frac{1}{2}\|H_\nu\|^2\big)
  -\| {\tilde H}_\mu \|^2
 +\Div ( {\cal H} - H_\mu - {\tilde H}_\mu) + \frac{1}{2}\,\|{\cal H}\|^2 + \lambda = 0\,.
\end{eqnarray}
\end{subequations}
Indeed, from the formulas for mixed scalar curvatures ${\rm S}_{\,\mD_\mu, \mD_\mu^\perp}$ and ${\rm S}_{\,\mD_\rho, \mD_\rho^\perp}$ we get
\begin{eqnarray*}
 {\rm S}_{\,\mD_1,\ldots,\mD_k} &=& \frac{1}{2}\sum_{\mu=1}^j \big(\Div({\tilde H}_\mu + H_\mu ) + \|{\tilde H}_\mu\|^2
 - \sum_{\nu =1}^j \| P_\mu H_\nu \|^2 \big) -\frac{1}{2} \sum_{\rho=j+1}^k \sum_{\nu =1}^j \| P_\rho H_\nu \|^2 \\
 &=& \Div {\cal H} + \frac{1}{2}\, \| {\cal H} \|^2 - \frac{1}{2} \sum\nolimits_{\,\nu=1}^j \| H_\nu \|^2\, .
\end{eqnarray*}
Using the above in \eqref{ELtwistedproductdimbig1} and, together with \eqref{Smixtwproddim1}, in \eqref{ELtwistedproductdimone}, yields (\ref{ELtwistedproductdimbigold},b), respectively.
\end{Remark}

\begin{Corollary} \label{coralldimNew}
Let $g$ be an adapted metric on $(M;\mD_1,\ldots,\mD_k)$ such that all $\mD_\mu$ are pairwise mixed totally geodesic and pairwise mixed integrable,
and there exists $j\in [2, k-1]$ such that $n_\mu=1$ for $1 \le \mu \le j$ and $n_\rho >1$ for $j+1 \le \rho \le k$.
Suppose that $n_\rho > j$ for at least one $\rho \in \{ j+1, \ldots, k \}$.
Then a pair $(g,\I)$, where $\I$ is the contorsion tensor of a~statistical connection,
is critical for the action \eqref{Eq-Smix-g} with respect to adapted variations of $g$ preserving the volume of $(M,g)$ and all variations of $\I$
if and only if \,$\I$ satisfies claims \ref{item2}, \ref{item5} and \ref{item4} of Corollary~\ref{corstatcritforall};
for all $j+1\le\xi\le k:\,\mD_\xi$ is totally geodesic, 
and
we have
\begin{subequations}
\begin{eqnarray}\label{ELtwistedproductdimbigNew}
P_\xi H_\nu =0\,,\quad
\nu \in \{ 1, \ldots, j \},\\
\label{ELtwistedproductdimoneNew}
{\rm Ric} |_{\mD_\mu \times \mD_\mu} =0\,,\quad 1\le\mu\le j.
\end{eqnarray}
\end{subequations}
\end{Corollary}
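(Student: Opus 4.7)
The plan is to upgrade the conclusions of Proposition~\ref{coralldim} by exploiting the extra dimension gap $n_\rho > j$. The forward direction starts by applying Proposition~\ref{coralldim} itself, which is available since all its hypotheses are included in those of the Corollary. This immediately gives that $\I$ satisfies claims \ref{item2}, \ref{item5} and \ref{item4} of Corollary~\ref{corstatcritforall}, that each $\mD_\xi$ with $\xi>j$ is totally geodesic with $\tilde H_\xi = 0$, and that equations \eqref{ELtwistedproductdimbigSmix} and \eqref{ELtwistedproductdimoneSmix} are valid for some constant $\lambda$.

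The key step is a rank argument applied to \eqref{ELtwistedproductdimbigSmix} at the distinguished index $\rho$ with $n_\rho > j$. The symmetric $(0,2)$-tensor $\sum_{\nu=1}^{j}(P_\rho H_\nu)^\flat \otimes (P_\rho H_\nu)^\flat$ on $\mD_\rho$ is a sum of $j$ rank-one forms and therefore has rank at most $j$, whereas $g_\rho$ is nondegenerate of rank $n_\rho > j$. Equation \eqref{ELtwistedproductdimbigSmix} then forces the coefficient $\tfrac{m\lambda}{2-m}$ to vanish, i.e.\ $\lambda = 0$ (using $m>2$, which holds as soon as $k\ge 3$ and at least one $\mD_\rho$ is higher-dimensional). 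With $\lambda=0$, equation \eqref{ELtwistedproductdimbigSmix} for an arbitrary $\xi \in \{j+1,\ldots,k\}$ reduces to $\sum_{\nu=1}^{j}(P_\xi H_\nu)^\flat \otimes (P_\xi H_\nu)^\flat = 0$, from which I would deduce \eqref{ELtwistedproductdimbigNew} by successively contracting the vanishing form against the vectors $P_\xi H_\nu$ themselves (or, equivalently, testing against a positive-definite auxiliary form on $\mD_\xi$). Finally, \eqref{ELtwistedproductdimoneNew} follows from \eqref{ELtwistedproductdimoneSmix} at $\lambda=0$ together with the identity $\mathrm{S}_{\mD_\mu,\mD_\mu^\perp} = \eps_{N_\mu}\,\mathrm{Ric}(N_\mu,N_\mu)$ recorded in the Preliminaries for a one-dimensional $\mD_\mu = \mathrm{span}(N_\mu)$.

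The converse direction is a straightforward verification that the stated conditions imply those of Proposition~\ref{coralldim} with $\lambda=0$. Indeed, $\tilde H_\xi = \sum_{\nu\ne\xi}P_\xi H_\nu$ vanishes because the terms with $\nu>j$ disappear by total geodesy of $\mD_\nu$ and the terms with $\nu\le j$ by \eqref{ELtwistedproductdimbigNew}; equation \eqref{ELtwistedproductdimbigSmix} with $\lambda=0$ becomes $0=0$; and \eqref{ELtwistedproductdimoneSmix} reduces to $\mathrm{S}_{\mD_\mu,\mD_\mu^\perp}=0$, which is exactly what \eqref{ELtwistedproductdimoneNew} yields via the same one-dimensional Ricci identity. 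Proposition~\ref{coralldim} then certifies criticality of $(g,\I)$.

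The step I expect to be most delicate is passing from $\sum_{\nu=1}^{j}(P_\xi H_\nu)^\flat \otimes (P_\xi H_\nu)^\flat = 0$ to $P_\xi H_\nu = 0$ for each $\nu$: in an indefinite adapted metric a sum of outer products can vanish without each summand being zero. This must be handled either by tracing against a suitable positive-definite reference form on $\mD_\xi$, or by iterating the rank argument: the sum has rank at most $j-1$ if any single term vanishes, so one would inductively reduce to the one-term case and conclude using $\eps_\nu \|P_\xi H_\nu\|^2 = 0$ on the relevant complement. The rank-based passage from \eqref{ELtwistedproductdimbigSmix} to $\lambda = 0$ is by contrast entirely algebraic and requires no signature hypothesis.
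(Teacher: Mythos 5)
Your proposal is correct and follows essentially the same route as the paper: the paper also compares ranks in the $\mD_\rho$-equation (in its unsimplified form \eqref{ELtwistedproductdimbig1}, which forces ${\rm S}_{\,\mD_1\ldots\mD_k}+\lambda=0$, equivalent to your $\lambda=0$ via ${\rm S}_{\,\mD_1\ldots\mD_k}=\frac{m\lambda}{2-m}$), then extracts $P_\xi H_\nu=0$ and reads off \eqref{ELtwistedproductdimoneNew} from the one-dimensional identity ${\rm S}_{\,\mD_\mu,\mD_\mu^\perp}=\eps_N{\rm Ric}_{N,N}$. The step you flag as delicate is in fact harmless: evaluating $\sum_\nu(P_\xi H_\nu)^\flat\otimes(P_\xi H_\nu)^\flat=0$ on a diagonal pair $(E_{\xi,i},E_{\xi,i})$ gives $\sum_\nu\<H_\nu,E_{\xi,i}\>^2=0$, a sum of squares of real numbers independent of the signature, which is exactly how the paper concludes.
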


\begin{proof}
If $n_\rho > j$, then comparing ranks of tensors in
\eqref{ELtwistedproductdimbig1} yields
\begin{eqnarray}\label{divHlambda}
\nonumber
 & P_\rho H_\nu =0,\quad \nu \in \{ 1, \ldots, j \},\\
 & {\rm S}_{\,\mD_1 \ldots \mD_k}  = - \lambda\,.
\end{eqnarray}
Moreover, for every $\xi \in \{ j+1, \ldots, k \}$ we get from the Euler-Lagrange equation \eqref{ELtwistedproductdimbig1} written for $\mD_\xi$ and \eqref{divHlambda}:
 $\sum\nolimits_{\,\nu =1}^j (P_\xi H_\nu)^\flat \otimes (P_\xi H_\nu)^\flat =0$, 
thus for all $E_{\xi, i}$ we obtain $\sum\nolimits_{\nu =1}^j \< H_\nu, E_{\xi,i} \>^2 =0$, but then also
\[
0 = \sum\nolimits_{i =1}^{n_\xi} \sum\nolimits_{\nu =1}^j \< H_\nu, E_{\xi,i} \>^2 =
 \sum\nolimits_{\nu =1}^j \sum\nolimits_{i =1}^{n_\xi} \< H_\nu, E_{\xi,i} \>^2 = \sum\nolimits_{\nu =1}^j \| P_\xi H_\nu \|^2
\]
and \eqref{ELtwistedproductdimbigNew} follows. Using \eqref{divHlambda} in \eqref{ELtwistedproductdimone}, we get \eqref{ELtwistedproductdimoneNew}. On the other hand, if (\ref{ELtwistedproductdimbigNew},b) hold, then for $\lambda=0$ all terms in \eqref{ELtwistedproductdimbig1} and \eqref{ELtwistedproductdimone} vanish.
\end{proof}

\begin{Example} \rm
A simple example of a critical metric 
is the following twisted product (with distributions $\mD_\mu$ defined as tangent to its factors): for a manifold $(M_1,g_1)$ let $M = M_1 \times \RR \times \RR$ and let $g= g_1 + e^{-2f_1(s) } dt^2 + e^{-2f_2(t)} ds^2$, where $f_1,f_2$ are linear functions. Let $\I$ correspond to a statistical connection satisfying conditions 2, 4 and 5 of Corollary~\ref{corstatcritforall} and such that $\< \I_X Y, Z \> =0$ unless $X,Y,Z \in TM_1$. Then $(g,\I)$ is critical for the action \eqref{Eq-Smix-g}, with respect to adapted variations of metric and all variations of contorsion.
\end{Example}

\begin{Corollary}\label{C3-sec4}
Let $g$ be an adapted metric on $(M;\mD_1,\ldots,\mD_k)$ such that all $\mD_\mu$ are one-dimensio\-nal, pairwise mixed totally geodesic and pairwise mixed integrable.
Then a pair $(g,\I)$, where $\I$ is the contorsion tensor of a~statistical connection,
is critical for the action \eqref{Eq-Smix-g} with respect to adapted variations of $g$ preserving the volume of $(M,g)$ and all variations of $\I$
if and only if $\I=0$ and there exists a constant $\lambda$ such that for all $1\le\mu\le k$ we have
\begin{equation} \label{ELtwistedproductdimonekRic}
{\rm Ric} |_{\mD_\mu \times \mD_\mu} =
 -\frac{2\lambda}{k-2}\, .
\end{equation}

\end{Corollary}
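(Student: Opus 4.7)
The plan is to specialize the analysis of Proposition~\ref{coralldim} to the case $j=k$, which lies outside its hypothesis $j\in[2,k-1]$, and to show that the Euler-Lagrange system collapses dramatically when all distributions are one-dimensional. The argument splits naturally into two stages, corresponding to $\delta_\I\bar J_\mD=0$ and $\delta_g\bar J_\mD=\lambda g$.

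First I would show that $\I\equiv 0$ is forced by reading off Corollary~\ref{corstatcritforall} in the one-dimensional setting. Fix a unit vector $E_\mu$ spanning $\mD_\mu$. Claims~\ref{item1} and \ref{item3} of Corollary~\ref{corstatcritforall} are vacuous since every $n_\mu=1$. Claim~\ref{item2} gives $\I_{E_\mu}E_\mu\in\mD_\mu$, and the second half of claim~\ref{item5} yields $\tr^\top_\mu\I=\I_{E_\mu}E_\mu=0$; hence $\I_{E_\mu}E_\mu=0$ for every $\mu$. Claim~\ref{item4} kills the components $\<\I_{E_\mu}E_\nu,E_\rho\>$ with $\mu,\nu,\rho$ pairwise distinct. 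The remaining components with $\mu\ne\nu$ vanish using the two statistical symmetries $\I^*=\I$ and $\I^\wedge=\I$:
\[
 \<\I_{E_\mu}E_\nu,E_\mu\>=\<\I_{E_\mu}E_\mu,E_\nu\>=0,\qquad
 \<\I_{E_\mu}E_\nu,E_\nu\>=\<\I_{E_\nu}E_\mu,E_\nu\>=\<\I_{E_\nu}E_\nu,E_\mu\>=0.
\]
This exhausts every component of $\I$ in the adapted frame $\{E_1,\ldots,E_k\}$.

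With $\I=0$, the action \eqref{Eq-Smix-g} coincides with \eqref{Eq-Smix-g0}, and the derivation of equation \eqref{ELtwistedproductdimone} inside the proof of Proposition~\ref{coralldim} applies verbatim for every $\mu\in\{1,\ldots,k\}$ (the case $j+1\le\rho\le k$ is empty), giving
\[
 {\rm S}_{\,\mD_1,\ldots,\mD_k}+\lambda-{\rm S}_{\,\mD_\mu,\mD_\mu^\perp}=0,\quad \mu=1,\ldots,k.
\]
Because $\dim\mD_\mu=1$, we have ${\rm S}_{\,\mD_\mu,\mD_\mu^\perp}={\rm Ric}(E_\mu,E_\mu)$. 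Summing over $\mu$ and using the elementary identity $2\,{\rm S}_{\,\mD_1,\ldots,\mD_k}=\sum_\mu {\rm S}_{\,\mD_\mu,\mD_\mu^\perp}={\rm S}$ gives ${\rm S}=-2k\lambda/(k-2)$, and substituting back produces \eqref{ELtwistedproductdimonekRic}. The converse is immediate: $\I=0$ trivially satisfies the five claims of Corollary~\ref{corstatcritforall}, and the Ricci condition turns the display above into an identity.

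No genuine obstacle is expected. The only subtlety is the bookkeeping in stage one, where I have to verify that claims~\ref{item2}, \ref{item5} and \ref{item4}, combined with the two statistical symmetries, cover every one of the $k^3$ components of $\I$ in the adapted orthonormal frame.
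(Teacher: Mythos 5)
Your proposal is correct and follows essentially the same route as the paper: $\I=0$ is obtained from claims 2, 4 and 5 of Corollary~\ref{corstatcritforall} together with the statistical symmetries, and the Ricci condition is derived by setting $j=k$ in the scheme of Proposition~\ref{coralldim}, i.e., combining \eqref{ELtwistedproductdimone1} with \eqref{Smixtwproddim1} and the identity $2\,{\rm S}_{\,\mD_1,\ldots,\mD_k}=\sum_\mu {\rm S}_{\,\mD_\mu,\mD_\mu^\perp}$. The only difference is that you spell out the frame-by-frame bookkeeping for the vanishing of $\I$, which the paper compresses into one sentence.
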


\begin{proof}
By claims \ref{item2} and 5 of Corollary~\ref{corstatcritforall}
the only non-zero components of $\I$ may be $\< \I_X X, X\>$ where $X \in \mD_\mu$ is a unit vector.
However, by \ref{item5} of Corollary~\ref{corstatcritforall} we get for a unit vector $X \in \mD_\mu$: $\I_X X = \tr^\top_\mu \I =0$.
It follows that $\I=0$. 
Equation \eqref{ELtwistedproductdimonekRic} can be proved similarly as 
\eqref{ELtwistedproductdimoneSmix}: let $j=k$, then from \eqref{ELmixedgeodesicmixedintegrable} we obtain \eqref{ELtwistedproductdimone1} and use in it \eqref{Smixtwproddim1} and $2{\rm S}_{\,\mD_1 \ldots \mD_k} = \sum\nolimits_{\mu=1}^k {\rm S}_{\,\mD_\mu, \mD_\mu^\perp}$.
%
%
\end{proof}

\begin{Corollary}
Let $(M,g)$ be a warped product of $k>2$ manifolds,
where $M_1$ is a closed manifold.
Then a pair $(g,\I)$, where $\I$ is the contorsion tensor of a~statistical connection, is critical for the action \eqref{Eq-Smix-g}  with respect to volume-preserving adapted variations of $g$ 
and all variations of $\I$, if and only if $(M,g)$ is a metric product
and $\I$ satisfies conditions 2, 4 and 5 of Corollary~\ref{corstatcritforall}.
\end{Corollary}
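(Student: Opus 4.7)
The plan is to specialize the general results of Corollaries~\ref{corstatcritforall}--\ref{C3-sec4} to the warped product setting, and then exploit the closedness of $M_1$ to rule out residual warping.

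First I would record the geometry of a warped product $g = g_1 \oplus u_2^2 g_2 \oplus \ldots \oplus u_k^2 g_k$ with each $u_\mu \in C^\infty(M_1)$ for $\mu \geq 2$: the base distribution $\mD_1 = TM_1$ is totally geodesic (so $H_1 = 0$), every fiber distribution $\mD_\mu$ with $\mu \geq 2$ is totally umbilical with mean curvature $H_\mu = -n_\mu\,\nabla(\log u_\mu) \in \mD_1$, and all orthogonal complements $\mD_\mu^\perp$ are integrable. By Proposition~\ref{integrableDperp} the $\mD_\mu$ are then integrable, pairwise mixed totally geodesic, and pairwise mixed integrable, so the hypotheses of all results in Section~\ref{sec:contorsion-statistical} are satisfied.

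Next, applying Corollary~\ref{corstatcritforall} to a critical pair $(g, \I)$, claim~1 forces $\mD_\mu$ to be totally geodesic whenever $n_\mu > 1$; for $\mu \geq 2$ this yields $H_\mu = 0$, so $u_\mu$ is constant on the connected manifold $M_1$. If every $\mu \geq 2$ satisfies $n_\mu > 1$, Corollary~\ref{C2-sec4} immediately implies that $(M, g)$ is a metric product and that $\I$ satisfies conditions~2, 4, 5 of Corollary~\ref{corstatcritforall}.

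The main obstacle is the case where $J_0 = \{\mu \geq 2 : n_\mu = 1\}$ is non-empty. Here one reorders indices so that the one-dimensional distributions come first and applies Proposition~\ref{coralldim} (or, when the rank condition holds, Corollary~\ref{coralldimNew}). When $n_1 > 1$ and $n_1 > |J_0|$, Corollary~\ref{coralldimNew} with $\xi = 1$ together with $H_\nu \in \mD_1$ immediately yields $H_\nu = P_1 H_\nu = 0$ for every $\nu \in J_0$. In the remaining sub-cases (typically $n_1 = 1$) the pointwise Euler-Lagrange equations \eqref{ELtwistedproductdimoneSmix} and \eqref{ELtwistedproductdimonekRic} become scalar PDEs on $M_1$ for the warping functions, since the warped-product Ricci identities express ${\rm Ric}|_{\mD_\mu\times\mD_\mu}$ and ${\rm S}_{\mD_\mu,\mD_\mu^\perp}$ in terms of the Hessian and Laplacian of $\log u_\nu$ with respect to $g_1$. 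The crucial step is then to integrate these PDEs (or their suitable linear combination) over the closed base $M_1$ and use $\int_{M_1}\Delta_{g_1}(\log u_\mu)\,{\rm d}\vol_{g_1} = 0$ together with the identity $\Delta_{g_1}(\log u_\mu) = u_\mu^{-1}\Delta_{g_1}u_\mu - |\nabla u_\mu|^2_{g_1}/u_\mu^{2}$; this produces an integral equality of the form $\sum_{\mu\in J_0}c_\mu\int_{M_1}|\nabla\log u_\mu|^2_{g_1}\,{\rm d}\vol_{g_1} = 0$ with positive constants $c_\mu$, forcing every $u_\mu$ to be constant and thus $(M,g)$ to be a metric product.

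Finally, the converse is immediate: if $g$ is a metric product and $\I$ satisfies conditions~2, 4, 5 of Corollary~\ref{corstatcritforall}, then Corollary~\ref{C2-sec4} confirms that $(g, \I)$ is critical for the prescribed class of variations.
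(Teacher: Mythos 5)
Your setup (warped-product geometry, $H_1=0$, $H_\mu=\nabla f_\mu\in\mD_1$, ${\tilde H}_\mu=0$ for $\mu>1$), your treatment of the case where every fiber has dimension $>1$ via Corollary~\ref{corstatcritforall}, and the converse direction all match the paper. The gap is in what you call the crucial step. Integrating the Euler--Lagrange equation \eqref{ELtwistedproductdimoneold} (or any fixed linear combination of these equations) over the closed base does \emph{not} produce an identity of the form $\sum_\mu c_\mu\int_{M_1}\|\nabla\log u_\mu\|^2=0$ with $c_\mu>0$: after the divergence terms drop out, the zeroth-order part is the quadratic form $\frac12\,\|{\cal H}\|^2-\frac12\sum_\nu\|H_\nu\|^2=\sum_{\mu\ne\nu}\<\nabla f_\mu,\nabla f_\nu\>$, which is indefinite for $k>2$, plus the unknown multiplier $\lambda$ times the volume. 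No choice of constant weights makes this sign-definite, so nothing follows from a naive integration. The paper needs two additional ideas that your sketch does not contain: (a) \emph{subtracting} the equations for two one-dimensional fibers, which kills both the indefinite quadratic term and $\lambda$ and leaves $\Delta(f_2-f_\mu)=0$, whence on the closed base all warping functions share a common gradient; and (b) the substitution $v=e^{af_2}$ with $a=\frac12(k-1)$, which converts the remaining Riccati-type equation $(k-2)\Delta f_2+\frac12(k-1)(k-2)\|\nabla f_2\|^2+\lambda=0$ into the linear equation $\Delta v=cv$ for the positive function $v$; integrating this over closed $M_1$ forces $\lambda=0$ and then $v$ is a positive harmonic function on a closed manifold, hence constant. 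In the mixed case the paper similarly takes the difference of the scalar equation for a one-dimensional fiber with the normalized trace of the equation for a higher-dimensional fiber to obtain $\Delta f_\xi=0$ directly.

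A secondary, smaller issue: Proposition~\ref{coralldim} and Corollary~\ref{coralldimNew} require $j\in[2,k-1]$, i.e.\ at least two one-dimensional distributions and at least one of higher dimension, so they do not cover the configuration with exactly one one-dimensional fiber (nor the configuration where all distributions, including the base, are one-dimensional, which is Corollary~\ref{C3-sec4}). The paper sidesteps this bookkeeping entirely by specializing the general equation \eqref{ELmixedgeodesicmixedintegrable} to the two forms \eqref{ELtwistedproductdimoneoldtwp} and \eqref{ELtwistedproductdimbigoldtwp} according to the dimension of each individual distribution, without invoking those intermediate corollaries. You should do the same, and then carry out the difference-plus-exponential-substitution argument above.
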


\begin{proof}
We can assume $g = g_1 + e^{-2 f_2} g_2 + \ldots + e^{-2 f_k} g_k$, where $f_\mu : M_1 \rightarrow \RR$ for $\mu=2, \ldots, k$.
For warped products we have $H_1 = 0$ and $H_\mu = \nabla f_\mu$, ${\tilde H}_\mu=0$ for all $\mu>1$.
If $\dim \mD_\mu=1$, the Euler-Lagrange equation \eqref{ELmixedgeodesicmixedintegrable} for $\mD_\mu$ is \eqref{ELtwistedproductdimoneold}, which becomes
\begin{equation}\label{ELtwistedproductdimoneoldtwp}
\sum\nolimits_{\nu=2}^k \Delta f_\nu - \Delta f_\mu -\frac{1}{2} \sum\nolimits_{\nu=2}^k \| \nabla f_\nu \|^2 + \frac{1}{2} \|  \sum\nolimits_{\nu=2}^k \nabla f_\nu \|^2 + \lambda =0,
\end{equation}
and if $\dim \mD_\mu>1$, then, by \eqref{ELtwistedproductdimbigold}, the Euler-Lagrange equation \eqref{ELmixedgeodesicmixedintegrable} on $\mD_\mu \times \mD_\mu$ is
\begin{equation}\label{ELtwistedproductdimbigoldtwp}
\sum\nolimits_{\nu=2}^k (P_\mu \nabla f_\nu)^\flat \otimes (P_\mu \nabla f_\nu)^\flat + \big( \sum\nolimits_{\nu=2}^k \Delta f_\nu -\frac{1}{2} \sum\nolimits_{\nu=2}^k \| \nabla f_\nu \|^2 + \frac{1}{2} \|  \sum\nolimits_{\nu=2}^k \nabla f_\nu \|^2 + \lambda \big)g_\mu = 0 .
\end{equation}


(i) Let $n_\mu = 1$ for all $\mu>1$.
Taking differences of \eqref{ELtwistedproductdimoneoldtwp} for $\mD_2$ and $\mD_\mu$ for any $\mu>2$, we get
$\Delta f_2 - \Delta f_\mu = 0$. 
Since $M_1$ is closed, it follows that $f_2 - f_\mu$ is constant and for all $\mu>2$
\begin{equation} \label{gradfi}
\nabla f_2 = \nabla f_\mu . 
\end{equation}
Hence, ${\cal H} = (k-1)\nabla f_2$ and $\sum\nolimits_{\mu } \|H_\mu\|^2=(k-1)\|\nabla f_2\|^2$, and \eqref{ELtwistedproductdimoneoldtwp} for 
$\mu=2$ yields the equality
$(k-2)\, \Delta f_2 + \frac12\,(k-1)(k-2)\, \| \nabla f_2\|^2 + \lambda =0$.
Thus,
$\frac{1}{a}\, e^{-a f_2} \Delta\, e^{af_2} = \frac{\lambda}{k-2}$,
where $a = \frac12\,(k-1)$. Since $e^{af_2}>0$, we obtain $\lambda=0$ and $f_2={\rm const}$.
From \eqref{gradfi} we conclude that all $f_i$ are constant and $(M,g)$ is the metric product.

(ii) Suppose now that $n_\mu >1$ for some $\mu>1$. From Corollary \ref{corstatcritforall} we find that 
$H_\mu=0=\nabla f_\mu$, and hence $f_\mu$ is constant. Let $\xi\in[2, k]$.
If $n_\xi >1$, then, by the same argument as above, $f_\xi$ is constant. 
If $n_\xi =1$, then taking the difference of \eqref{ELtwistedproductdimoneoldtwp} for $\mD_\xi$ and the trace of \eqref{ELtwistedproductdimbigoldtwp} divided by $n_\mu$,
we find
$-\Delta f_\xi = 0$,
and since $M_1$ is compact, we get $f_\xi={\rm const}$. Thus, all $f_i$ are constant and $(M,g)$ is the metric product.
\end{proof}

\subsection{Variations of $\I$ corresponding to statistical connections}
\label{sec:metric-stat}

In this section we still consider adapted variations of metric, but restrict variations of $\I$ to tensors corresponding to statistical connections.
Using Theorem~\ref{statisticalcritSmixI} 
we examine $\delta_g{\bar Q}_\mu$ in Proposition~\ref{C-vr-terms}.

\begin{Lemma}\label{P-06}
Let $g$ be an adapted metric on $(M;\mD_1,\ldots,\mD_k)$,
and let the contorsion tensor $\I$ of a statistical connection be critical for
the action \eqref{Eq-Smix-g}
restricted to contorsion tensors of statistical connections on $(M,g)$.
Then for the $\mD_\mu$-variation of metric we have
\begin{eqnarray*}
&& 2\delta_g{\bar Q}_\mu = \sum\nolimits_{\nu\ne\mu} \frac{n_\nu}{2}\,(P_\mu \tr^\perp_\nu \I )^\flat \otimes (P_\mu \tr^\perp_\nu \I )^\flat
 - \frac{1}{2}\, \| P_\mu^\perp \tr^\perp_\mu \I \|^2 g_\mu \nonumber \\
&&\hskip-6mm -\,3\sum\nolimits_{\nu\ne\mu} \!{\rm Sym}( (P_\mu H_\nu )^\flat{\otimes} (P_\mu \tr^\perp_\nu \I )^\flat)
 {+} 2\< h_\mu, P_\mu^\perp \tr^\perp_\mu \I \>
 {+} \sum\nolimits_{\nu\ne\mu} \!{\rm Sym}( (P_\mu \tr^\top_\nu \I )^\flat {\otimes} (P_\mu H_\nu )^\flat ) \nonumber \\
&&\hskip-6mm +\,\frac{1}{2}\, \big\< \sum\nolimits_{\nu\ne\mu} (P_\mu^\perp H_\nu -{\tilde H}_\nu), \tr_\mu^\perp \I \big\>g_\mu
 +2\,\I_{{\tilde H}_\mu}^\flat
 - \frac{1}{2}\, \< H_\mu, \tr_\mu^\perp \I \>g_\mu - {\rm Sym}( (P_\mu \tr^\top_\mu \I)^\flat \otimes {\tilde H}_\mu^\flat ) \,.
\end{eqnarray*}
\end{Lemma}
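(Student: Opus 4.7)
The plan is to substitute the critical-contorsion conditions of Theorem~\ref{statisticalcritSmixI} into the explicit formula for $2\,\delta_g\bar Q_\mu$ from Proposition~\ref{C-vr-terms} (statistical case) and simplify term by term. Under our hypotheses $\I=\I^{*}=\I^{\wedge}$, so $\<\I_X Y,Z\>$ is totally symmetric in $X,Y,Z$; moreover, by \eqref{ELSmixIstat1}, \eqref{ELSmixIstat2}, and \eqref{newELIstat} we have $P_\mu\tr_\mu^\bot\I=0$, the pointwise identity
\[
 P_\mu^\bot\I_X U = -\tfrac12\<X,U\>\tr_\mu^\bot\I,\quad X,U\in\mD_\mu,
\]
and $\<\I_X Y,Z\>=0$ whenever $X,Y,Z$ lie in three pairwise distinct $\mD_\xi$'s. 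In particular, for $X\in\mD_\mu$ and $Y\in\mD_\nu$ with $\mu\ne\nu$, $\I_X Y\in\mD_\mu\oplus\mD_\nu$, with $\mD_\mu$-component controlled via statistical symmetry $\I_X Y=\I_Y X$ and the analogous displayed identity for the $\nu$-block.

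Term by term, I would treat the four groups of summands in Proposition~\ref{C-vr-terms} as follows. Expressions $\<\I_{\mu,a}E_{\mu,b},\tr_{\,\mu}^\bot\I\>$ and $\<\I_{\mu,a},\I_{\mu,b}\>_{|\mD_\mu^\bot}$ reduce, after inserting the displayed relation and using $P_\mu\tr_\mu^\bot\I=0$, to multiples of $\|P_\mu^\bot\tr_\mu^\bot\I\|^2 g_\mu$, while the corresponding $\nu\ne\mu$ part of $\<\I_{\mu,a},\I_{\mu,b}\>_{|\mD_\nu}$ collapses, after applying the analogue $P_\nu^\bot\I_Y V=-\tfrac12\<Y,V\>\tr_\nu^\bot\I$ and summing on an orthonormal basis of $\mD_\nu$, into the tensor $\tfrac{n_\nu}{2}(P_\mu\tr_\nu^\bot\I)^\flat\otimes(P_\mu\tr_\nu^\bot\I)^\flat$. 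The symmetric shape-operator contributions $\<h_\mu(E_{\mu,b},\cdot),\I_{\mu,a}\>$, $\<(\tilde A_\mu)_{E_{\mu,a}},\I_{\mu,b}\>$, and their $\nu\ne\mu$ analogues assemble into the terms $2\<h_\mu,P_\mu^\bot\tr_\mu^\bot\I\>$ and $2\,\I^\flat_{\tilde H_\mu}$ upon using $\tilde H_\mu=\sum\nolimits_{\nu\ne\mu} P_\mu H_\nu$ and the total symmetry of $\<\I_X Y,Z\>$. The mixed trace-and-mean-curvature terms $\<\tr_{\,\mu}^\top\I,E_{\mu,a}\>\<\tilde H_\mu,E_{\mu,b}\>$ and their duals over $\nu\ne\mu$ symmetrize in $(a,b)$ to produce the various $\operatorname{Sym}(\cdot)$ products in the statement, while the remaining $\<\I_{\mu,b}E_{\mu,a},H_\mu-\tilde H_\mu\>$ and $\<\I_{\mu,b}E_{\mu,a},\tilde H_\nu-H_\nu\>$ summands yield, via the displayed identity, the scalar-coefficient terms $-\tfrac12\<H_\mu,\tr_\mu^\bot\I\>g_\mu$ and $\tfrac12\<\sum\nolimits_{\nu\ne\mu}(P_\mu^\bot H_\nu-\tilde H_\nu),\tr_\mu^\bot\I\>g_\mu$. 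Finally, all $T_\mu$- and $\tilde T_\mu$-terms drop out: the pairings $\<T_\mu(E_{\mu,b},\cdot),\I_{\mu,a}\>$ and $\<(\tilde T_\mu^\sharp)_{E_{\mu,a}},\I_{\mu,b}\>$ vanish because for statistical $\I$, $\I_X Y$ is symmetric in $X,Y$ while $T_\mu$ and $\tilde T_\mu$ are antisymmetric.

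The main obstacle is combinatorial bookkeeping rather than conceptual difficulty: the base formula of Proposition~\ref{C-vr-terms} contains over a dozen summands in the $\mu$-block and an analogous set indexed by $\nu\ne\mu$, and one must confirm term by term which collapse, which merge, and which cancel. A systematic approach is to evaluate both sides on an arbitrary adapted pair $(E_{\mu,a},E_{\mu,b})$, rewrite every occurrence of $P_\mu^\bot\I_{E_{\mu,a}}E_{\mu,b}$ using the displayed identity, and express every surviving coefficient as a combination of $g_\mu(E_{\mu,a},E_{\mu,b})$, of the basic $\mD_\mu$-valued $1$-forms $\{H_\mu,\tilde H_\mu,P_\mu\tr_\nu^\bot\I,P_\mu\tr_\mu^\top\I\}^\flat$, and of the pairings $\<h_\mu,\cdot\>$ and $\I^\flat_{\tilde H_\mu}$ appearing in the lemma.
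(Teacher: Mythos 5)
Your proposal follows essentially the same route as the paper's proof: both substitute the critical-statistical conditions (\ref{ELSmixIstat1}-c) of Theorem~\ref{statisticalcritSmixI} into the known variational formula for $\delta_g\bar Q_\mu$ and simplify term by term, the only difference being that you start from the assembled statistical-case expression in Proposition~\ref{C-vr-terms} while the paper works directly from the per-term formulas of \cite[Eqs.~(67)--(75)]{rz-3}. One minor imprecision: the term $6\,\<T_\mu({E}_{\mu,b},\,\cdot),\,\I_{{\mu,a}}\>$ is contracted over a single index, so it does not die by pairing a symmetric with an antisymmetric tensor alone; it vanishes only after inserting $P_\mu^\bot\I_X U=-\tfrac12\<X,U\>\tr_\mu^\bot\I$ from \eqref{ELSmixIstat2} and discarding the resulting part antisymmetric in $(a,b)$ --- a step your general procedure does cover, so the conclusion stands.
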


\begin{proof}
It follows from \eqref{newELIstat} that $\< \I_X Y, Z \>=0$ if each of $X,Y,Z$ belongs to a different distribution.
The computation of all formulas \cite[Eqs.~(67)--(75)]{rz-3} gives the same results as Lemma~\ref{propdeltaQforstatistical}, 
but now we obtain $\dt \< \Theta, {T}^\sharp_\xi \> =0$ from $\sum_{a,b} \< T(E_{\nu,a}, E_{\nu,b}), {\cal E}_{\nu,i} \> \< {\cal E}_{\nu,j}, \I_{\nu,a} E_{\nu,b} \>= 0$ by $\I=\I^\wedge$, and we obtain $\dt \< \Theta, {\tilde T}^\sharp \> =0$ using \eqref{ELSmixIstat2}, which yields sums of symmetric and antisymmetric in $E_{\nu, a}, E_{\nu,b}$ terms:
\begin{eqnarray*}
 \dt\sum\nolimits_{\xi} \<\Theta, {\tilde T}^\sharp_\xi \>
 &=& \sum\nolimits_{\nu\ne\mu} \Big(
 \sum {B}(E_{\mu,i}, E_{\mu,j}) \<2 {\tilde T}_\nu ({\cal E}_{\nu,k}, E_{\mu,j}), E_{\nu, a}\> \<{\cal E}_{\nu,k}, \I_{\nu,a} E_{\mu,i}\> \\
&& - \sum {B}(E_{\mu,i}, E_{\mu,j} ) \< 2 {\tilde T}_\nu ( E_{\mu,i}, {\cal E}_{\nu,k} ), E_{\nu, a} \> \< E_{\mu,j}, \I_{\nu,a} {\cal E}_{\nu,k} \> \\
&& +\sum {B}(E_{\mu,i}, E_{\mu,j})\< 2{\tilde T}_\nu ({\cal E}_{\nu,k}, E_{\mu,j}), E_{\nu, a}\>\< E_{\nu,a}, \I_{{\cal E}_{\nu,k}} E_{\mu,i} \>\Big) \\
&& + \sum {B}(E_{\mu,i}, E_{\mu,j} ) \< 2 T_\mu ( E_{\mu,k}, E_{\mu,j} ), {\cal E}_{\mu, a} \> \< E_{\mu,k}, \I_{{\cal E}_{\mu,a} } E_{\mu,i} \> \\
&& - \sum {B}(E_{\mu,i}, E_{\mu,j} ) \< 2 T_\mu ( E_{\mu,i}, E_{\mu,k} ), {\cal E}_{\mu, a} \> \< E_{\mu,j}, \I_{{\cal E}_{\mu,a} } E_{\mu,k} \> \\
&& + \sum {B}(E_{\mu,i}, E_{\mu,j} ) \< 2 T_\mu ( E_{\mu,k}, E_{\mu,j} ), {\cal E}_{\mu, a} \>\< {\cal E}_{\mu,a}, \I_{{\mu,k} } E_{\mu,i} \> \\
&=& 6 \sum\nolimits_{\nu\ne\mu} \sum {B}(E_{\mu,i}, E_{\mu,j} ) \< T_\mu ( E_{\mu,i}, E_{\mu,j} ), E_{\nu, a} \> \< \tr^\perp_\mu \I, E_{\nu,a} \> =0\,.
\end{eqnarray*}
Using \eqref{ELSmixIstat2} in equations from the proof of Lemma~\ref{propdeltaQforstatistical}, we also simplify other formulas:
\begin{eqnarray*}
 &&\hskip-4mm
 \dt \sum\nolimits_{\xi} \< \I^*, \I^\wedge \>_{| V_\xi}
= \< {B}_\mu, \sum\nolimits_{\nu\ne\mu} \big(-\frac{n_\nu}{2}\, (P_\mu\tr^\perp_\nu\I)^\flat\otimes(P_\mu\tr^\perp_\nu\I)^\flat\big)
- \frac{1}{2}\, \| P_\mu^\perp \tr^\perp_\mu \I \|^2 g_\mu \>\,,\\
&&\hskip-4mm \dt \sum\nolimits_{\xi} \< \Theta, A_\xi \>
= -2 \< {B}_\mu, \sum\nolimits_{\nu\ne\mu} {\rm Sym}( (P_\mu H_\nu )^\flat \otimes (P_\mu \tr^\perp_\nu \I )^\flat ) \>\,,\\
 &&\hskip-4mm \dt \sum\nolimits_{\xi} \< \Theta, {\tilde A}_\xi \>
= 2 \< {B}_\mu, \< h_\mu, P_\mu^\perp \tr^\perp_\mu \I \> \>\,.
\end{eqnarray*}
From \cite[Eq.~(72)]{rz-3}, for statistical connections we get $\dt \< \tr^\top_\nu \I, \tr^\perp_\nu \I^* \>=0$.
From \eqref{ELSmixIstat1} we~get
\begin{eqnarray*}
&& \dt \sum\nolimits_{\xi} \< \tr^\top_\xi \I^*, \tr^\perp_\xi \I \>
= \< {B}_\mu, - \| P_\mu^\perp \tr^\perp_\mu \I \|^2 g_\mu \>\,,\\
 && \dt \sum\nolimits_{\xi} \< \tr^\top_\xi ( \I^* - \I ), {\tilde H}_\xi - H_\xi \>
 = \big\< {B}, \I_{ H_\mu - {\tilde H}_\mu }^\flat
+ {\rm Sym}( (P_\mu \tr^\top_\mu \I )^\flat \otimes {\tilde H}_\mu^\flat ) \\
&& + \sum\nolimits_{\nu\ne\mu} ( \< \I_{{\tilde H}_\nu - H_\nu }^\flat  +\,{\rm Sym}( (P_\mu \tr^\perp_\nu \I )^\flat \otimes
(P_\mu H_\nu)^\flat ) )
 \big\>.
\end{eqnarray*}
%
%
Using \eqref{ELSmixIstat2}, we get on $\mD_\mu \times \mD_\mu$:
\begin{eqnarray*}
 && \I_{H_\mu}^\flat = \frac{1}{2}\, \< H_\mu, \tr_\mu^\perp \I \>g_\mu\,, \quad
 \sum\nolimits_{\nu\ne\mu} \I_{H_\nu}^\flat
 = \I_{{\tilde H}_\mu}^\flat + \frac{1}{2}\, \< \sum\nolimits_{\nu\ne\mu} P_\mu^\perp H_\nu, \tr_\mu^\perp \I \>g_\mu\,, \\
 && \sum\nolimits_{\nu\ne\mu} \I_{{\tilde H}_\nu}^\flat = \frac{1}{2}\, \< \sum\nolimits_{\nu\ne\mu} {{\tilde H}_\nu}, \tr_\mu^\perp \I \>g_\mu\,.
\end{eqnarray*}
Thus,
\begin{eqnarray*}
 && \dt \sum\nolimits_{\xi} \< \tr^\perp_\xi ( \I^* - \I ), {\tilde H}_\xi - H_\xi \>
 = \big\< {B}, \frac{1}{2}\, \< \sum\nolimits_{\nu\ne\mu} {{\tilde H}_\nu}, \tr_\mu^\perp \I \>g_\mu - 2\I_{{\tilde H}_\mu}^\flat \\
 && -\,\frac{1}{2}\, \< \sum\nolimits_{\nu\ne\mu} P_\mu^\perp H_\nu, \tr_\mu^\perp \I \>g_\mu
  +\sum\nolimits_{\nu\ne\mu} {\rm Sym}( (P_\mu \tr^\perp_\nu \I )^\flat \otimes
 (P_\mu H_\nu)^\flat ) \\
 && +\,\frac{1}{2}\, \< H_\mu, \tr_\mu^\perp \I \>g_\mu
  +{\rm Sym}( (P_\mu \tr^\top_\mu \I )^\flat \otimes {\tilde H}_\mu^\flat ) \big\>\,.
\end{eqnarray*}
By \eqref{E-barQ} and \eqref{E-dt-barQ}, from the above computations we get the required formula for $\delta_g{\bar Q}_\mu$.
\end{proof}

The following result combines Theorem~\ref{T-main01} (variations of $g$ with $\I$ fixed) and Theorem~\ref{statisticalcritSmixI} (variations of $\I$ with $g$ fixed) for variations among statistical connections.

\begin{Theorem}\label{T-06}
A pair $(g,\I)$, where $g$ is an adapted metric and $\I$ is the contorsion tensor of a statistical connection on $(M,\mD_1,\ldots,\mD_k)$,
is critical for the action \eqref{Eq-Smix-g} with respect to volume-preserving adapted variations of $g$ and variations of \,$\I$ corresponding to
statistical connections if and only if it satisfies the Euler-Lagrange equations {\rm (\ref{ELSmixIstat1}-c)} 
and \eqref{ElmixDDvp-b}, which has the
form:
\begin{eqnarray}\label{ELamongstatistical}
&& \sum\nolimits_{\nu\ne\mu} \frac{n_\nu}{2}\, (P_\mu \tr^\perp_\nu \I )^\flat \otimes (P_\mu \tr^\perp_\nu \I )^\flat
- \frac{1}{2}\, \| P_\mu^\perp \tr^\perp_\mu \I \|^2 g_\mu \nonumber \\
&& -\,3 \sum\nolimits_{\nu\ne\mu} {\rm Sym}( (P_\mu H_\nu )^\flat \otimes (P_\mu \tr^\perp_\nu \I )^\flat ) \nonumber \\
&& +\,2\, \< h_\mu, P_\mu^\perp \tr^\perp_\mu \I \>
+ \sum\nolimits_{\nu\ne\mu} {\rm Sym}( (P_\mu \tr^\top_\nu \I )^\flat \otimes (P_\mu H_\nu )^\flat ) \nonumber \\
&& +\,\frac{1}{2}\,\<\sum\nolimits_{\nu\ne\mu} (P_\mu^\perp H_\nu - {\tilde H}_\nu), \tr_\mu^\perp \I \>g_\mu + 2\,\I_{{\tilde H}_\mu}^\flat
 {-} \frac{1}{2}\, \< H_\mu, \tr_\mu^\perp \I \>g_\mu - {\rm Sym}( (P_\mu \tr^\top_\mu \I )^\flat \otimes {\tilde H}_\mu^\flat ) \nonumber \\
&& +\, 2 \big(
 -\Div{h}_{\mu} -{\cal K}_{\mu}^\flat -\tilde{H}_{\mu}^\flat\otimes\tilde{H}_{\mu}^\flat
 +\frac12\Upsilon_{\tilde h_{\mu},\tilde h_{\mu}} +\frac12\Upsilon_{\tilde T_{\mu}, \tilde T_{\mu}}
 +\,2\,{\cal T}_{\mu}^\flat + (\Div H_{\mu})\,g_\mu \nonumber \\
 && +\sum\nolimits_{\,\nu\ne {\mu}}\big(-\Div\tilde{h}_\nu|_{\mD_{\mu}} - (P_{\mu}\tilde{\cal K}_\nu)^\flat -(P_{\mu}{H}_\nu)^\flat\otimes(P_{\mu}{H}_\nu)^\flat \nonumber \\
 && +\,\frac12\Upsilon_{P_{\mu}h_\nu,P_{\mu}h_\nu} +\frac12\Upsilon_{P_{\mu}T_\nu,P_{\mu}T_\nu} +2\,(P_{\mu}\tilde {\cal T}_\nu)^\flat
 +(\Div \tilde H_\nu)\,g_\mu\big)\big) \nonumber \\
 && +\,2\big(\overline{\rm S}_{\,\mD_1,\ldots,\mD_k} -\frac12\Div\sum\nolimits_{\,\nu}(H_\nu + \tilde{H}_\nu) +\lambda\big)\,g_\mu=0,\quad
\mu=1, \ldots, k\,.
\end{eqnarray}
\end{Theorem}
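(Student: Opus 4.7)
The plan is to reduce the proof to an assembly of three pieces already at hand: Theorem~\ref{T-main01} for adapted $g$-variations with fixed $\I$, Theorem~\ref{statisticalcritSmixI} for statistical-type variations of $\I$ with fixed $g$, and Lemma~\ref{P-06}, which rewrites $\delta_g\bar Q_\mu$ under the assumption that $\I$ is statistical-critical. Since a single smooth 2-parameter family $(g_t,\I_s)$ gives a total derivative that decomposes as a sum of the partial derivatives in $t$ and $s$ directions, and the admissible variations (adapted in $g$, statistical in $\I$) are independent, criticality of $(g,\I)$ for both classes of variations simultaneously is equivalent to criticality for each class separately.

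First I would apply Theorem~\ref{statisticalcritSmixI} in the $\I$-direction: with $g$ fixed, the condition $\delta_{\I}\bar J_{\mD}=0$ among contorsion tensors of statistical connections is exactly the system (\ref{ELSmixIstat1}-c). Next, for the $g$-direction with $\I$ fixed, Theorem~\ref{T-main01} gives the volume-preserving Euler-Lagrange equation \eqref{ElmixDDvp-b} with tensors ${\delta Q}_\mu$ and ${\delta_g \bar Q}_\mu$ from Proposition~\ref{C-vr-terms}. The tensor ${\delta Q}_\mu$ is already explicit in Proposition~\ref{C-vr-terms}; the tensor ${\delta_g \bar Q}_\mu$, although long in general, simplifies drastically under the hypothesis that $\I$ satisfies (\ref{ELSmixIstat1}-c)—this is precisely the content of Lemma~\ref{P-06}.

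To assemble \eqref{ELamongstatistical}, I would multiply \eqref{ElmixDDvp-b} by $2$, replace $2\,{\delta_g \bar Q}_\mu$ by the explicit expression from Lemma~\ref{P-06}, and replace $2\,{\delta Q}_\mu$ by twice the formula from Proposition~\ref{C-vr-terms}. The divergence term in \eqref{ElmixDDvp-b} involving $\tr_{\,\nu}^\bot(\I-\I^*)$ and $\tr_{\,\nu}^\top(\I-\I^*)$ vanishes because $\I^*=\I$ for statistical connections, leaving only $-\frac12\Div\sum_\nu(H_\nu+\tilde H_\nu)$, which reproduces the last line of \eqref{ELamongstatistical}. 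Matching all remaining terms pointwise on $\mD_\mu\times\mD_\mu$ gives exactly \eqref{ELamongstatistical}.

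The main obstacle is purely bookkeeping—checking that the substitutions produce precisely the terms listed in \eqref{ELamongstatistical} without sign errors or missing symmetrizations, in particular verifying that the six lines of Lemma~\ref{P-06} account verbatim for the first six lines of \eqref{ELamongstatistical}, while the ${\delta Q}_\mu$ contribution from Proposition~\ref{C-vr-terms} produces the block involving $-\Div h_\mu$, the Casorati-type tensors, the $\Upsilon$-tensors, and the dual summation over $\nu\ne\mu$. Conversely, if $(g,\I)$ satisfies (\ref{ELSmixIstat1}-c) together with \eqref{ELamongstatistical}, then by Lemma~\ref{P-06} and Proposition~\ref{C-vr-terms} it satisfies both partial gradient equations, hence is critical, completing the ``if'' direction.
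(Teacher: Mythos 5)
Your proposal is correct and follows essentially the same route as the paper, whose proof is exactly the assembly you describe: the Euler-Lagrange equation \eqref{ElmixDDvp-b} with ${\delta Q}_\mu$ taken from Proposition~\ref{C-vr-terms} and ${\delta_g\bar Q}_\mu$ from Lemma~\ref{P-06}, combined with the system (\ref{ELSmixIstat1}-c) from Theorem~\ref{statisticalcritSmixI}. Your additional remarks (independence of the two variation classes, and the vanishing of the $\tr(\I-\I^*)$ divergence terms because $\I^*=\I$) are correct and merely make explicit what the paper leaves implicit.
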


\begin{proof}
Equation \eqref{ELamongstatistical} is the Euler-Lagrange equation \eqref{ElmixDDvp-b}, with $\delta Q_\mu$ given in Proposition~\ref{C-vr-terms} and $\delta_g\bar Q_\mu$ given in Lemma~\ref{P-06}.
\end{proof}

Using Theorem~\ref{T-06}, we will show how to obtain examples of critical pairs $(g,\I)$
of the action \eqref{Eq-Smix-g} with respect to adapted variations of metric and variations of \,$\I$ corresponding to statistical connections,
from critical metrics of this action with fixed Levi-Civita connection.

\begin{Lemma}\label{L-barS-S}
Let $\I$ be the contorsion tensor of a statistical connection on $(M,\mD_1,\ldots,\mD_k)$ with an adapted metric $g$
such that $\tr^\top_\mu \I=0$ for all $\mu \in \{1, \ldots, k\}$. If \,$\I$ is critical for the action \eqref{Eq-Smix-g} with fixed $g$, with respect to variations of \,$\I$ corresponding to statistical connections, then $\overline{\rm S}_{\,\mD_1,\ldots,\mD_k} = {\rm S}_{\,\mD_1,\ldots,\mD_k}$.
\end{Lemma}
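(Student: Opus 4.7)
The plan is to use the pointwise identity \eqref{eqvarstat} to reduce the claim to showing that, under the hypotheses, every summand on the right-hand side vanishes. Recall that \eqref{eqvarstat} expresses the difference $2\,\overline{\rm S}_{\,\mD_1,\ldots,\mD_k}-2\,{\rm S}_{\,\mD_1,\ldots,\mD_k}$ as $-\sum_{\mu}\big(\<\tr_{\,\mu}^\bot\I,\tr_{\,\mu}^\top\I\>-\frac12\<\I,\I\>_{\,|V_\mu}\big)$, so it suffices to prove that both $\<\tr_{\,\mu}^\bot\I,\tr_{\,\mu}^\top\I\>=0$ and $\<\I,\I\>_{\,|V_\mu}=0$ for each $\mu$.

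The first piece is immediate. Since $\tr_{\,\mu}^\bot\I=\sum_{\nu\ne\mu}\tr_{\,\nu}^\top\I$, the hypothesis $\tr_{\,\mu}^\top\I=0$ for all $\mu$ forces $\tr_{\,\mu}^\bot\I=0$ as well, and hence $\<\tr_{\,\mu}^\bot\I,\tr_{\,\mu}^\top\I\>=0$ trivially.

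For the second piece I plan to establish the stronger pointwise statement that $\I_X Y=0$ whenever $X\in\mD_\mu$, $Y\in\mD_\nu$ with $\mu\ne\nu$; this kills $\<\I,\I\>_{\,|V_\mu}$ for every $\mu$. Two ingredients are needed. First, since $\bar\nabla$ is statistical we have $\I^*=\I=\I^\wedge$, so the $(0,3)$-tensor $(X,Y,Z)\mapsto\<\I_X Y,Z\>$ is totally symmetric. Second, criticality of $\I$ among statistical variations gives the equations of Theorem~\ref{statisticalcritSmixI}; together with $\tr_{\,\mu}^\perp\I=0$, equation \eqref{ELSmixIstat2} reduces to $P_\mu^\perp\I_X U=0$ for all $X,U\in\mD_\mu$, i.e.\ each $\mD_\mu$ is $\I_X$-invariant for $X\in\mD_\mu$. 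Then, fixing $X\in\mD_\mu$ and $Y\in\mD_\nu$ with $\mu\ne\nu$, I decompose $\I_X Y=\sum_\rho P_\rho\I_X Y$ and pair with test vectors $Z\in\mD_\rho$: for $\rho\notin\{\mu,\nu\}$ the pairing is zero directly by \eqref{newELIstat}; for $\rho=\mu$, total symmetry gives $\<\I_X Y,Z\>=\<\I_Z X,Y\>$ with $\I_Z X\in\mD_\mu\perp Y$, so this vanishes; for $\rho=\nu$, total symmetry gives $\<\I_X Y,Z\>=\<\I_Y Z,X\>$ with $\I_Y Z\in\mD_\nu\perp X$, which again vanishes.

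Combining the two vanishings in \eqref{eqvarstat} yields the conclusion $\overline{\rm S}_{\,\mD_1,\ldots,\mD_k}={\rm S}_{\,\mD_1,\ldots,\mD_k}$. The main subtle point is the case $\rho\in\{\mu,\nu\}$ in the third step: the direct condition \eqref{newELIstat} does not apply there, and one must first use the total symmetry of statistical $\I$ to reposition the ``off-diagonal'' slot into a same-distribution pair, where the invariance statement derived from \eqref{ELSmixIstat2} takes over. Note that \eqref{ELSmixIstat2} is derived assuming three pairwise distinct distributions, so this argument is stated for $k\ge 3$; this is the setting relevant for the subsequent applications of the lemma.
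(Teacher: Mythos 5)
Your proposal is correct and follows essentially the same route as the paper: both start from the identity \eqref{eqvarstat}, kill the trace term using the hypothesis $\tr^\top_\mu\I=0$ (hence $\tr^\perp_\mu\I=0$), and kill $\<\I,\I\>_{\,|V_\mu}$ by combining the total symmetry of a statistical contorsion with \eqref{ELSmixIstat2} and \eqref{newELIstat}. Your intermediate reformulation as the pointwise vanishing of $\I_XY$ for $X,Y$ in different distributions is just a cleaner packaging of the same component-by-component computation the paper carries out.
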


\begin{proof}
From the assumption $\tr^\top_\mu \I=0$ for all $\mu \in \{1, \ldots, k\}$ we obtain
 $\sum\nolimits_{\,\nu} \<\tr_{\,\nu}^\bot\I,\,\tr_{\,\nu}^\top\I\> =0 $\,.
From (\ref{ELSmixIstat2},c) we obtain for all $\mu \in \{1, \ldots, k\}$:
\begin{eqnarray*}
\<\I,\,\I\>_{\,|\,V_\mu} \eq \sum \< \I_{E_{\mu,a}} {\cal E}_{\mu, b}, \I_{{\cal E}_{\mu, b}} E_{\mu, a} \>
= \sum\nolimits_{\nu \ne \mu} \sum \< \I_{E_{\mu,a}} E_{\nu, b}, \I_{E_{\nu, b}} E_{\mu, a} \> \\
\eq  \sum\nolimits_{\nu \ne \mu} \sum \< \I_{E_{\mu,a}} {E}_{\nu, b}, E_{\nu, c} \> \< E_{\nu, c}, \I_{{E}_{\nu, b}} E_{\mu, a} \> \\
&&+ \sum\nolimits_{\nu \ne \mu} \sum \< \I_{E_{\mu,a}} {E}_{\nu, b}, E_{\mu, c} \> \< E_{\mu, c}, \I_{{E}_{\nu, b}} E_{\mu, a} \> \\
\eq \sum\nolimits_{\nu \ne \mu} \sum \< \tr^\perp_\nu \I, {E_{\mu,a}} \> \< {E}_{\nu, b}, E_{\nu, c} \> \< \tr^\perp_\nu \I, {E_{\mu,a}} \> \< {E}_{\nu, b}, E_{\nu, c} \> \\
&&+ \sum\nolimits_{\nu \ne \mu} \sum \< \tr^\perp_\mu \I, {E_{\nu,b}} \> \< {E}_{\mu,a}, E_{\mu, c} \> \< \tr^\perp_\mu \I, {E_{\nu,b}} \> \< {E}_{\mu,a}, E_{\mu, c} \> =0\,.
\end{eqnarray*}
Hence, from \eqref{eqvarstat}, we get $\overline{\rm S}_{\,\mD_1,\ldots,\mD_k} = {\rm S}_{\,\mD_1,\ldots,\mD_k}$.
\end{proof}

Using Theorem~\ref{T-06} and Lemma~\ref{L-barS-S}, we get the following

\begin{Corollary}
Let $g$ be a critical adapted metric for the action \eqref{Eq-Smix-g0} with respect to adapted variations on $(M;\mD_1,\ldots,\mD_k)$.
Suppose that
there exists $\mu\in\{1,\ldots,k\}$ such that
either $n_\mu\ge3$,
or $n_\mu=2$ and ${\tilde H}_\mu=0$.
Then there exists a contorsion tensor $\I\ne0$ of a statistical connection on $(M,g)$ such that the pair $(g,\I)$ is critical
for the action \eqref{Eq-Smix-g} with respect to adapted variations of $g$ and variations of \,$\I$ corresponding to statistical connections.
\end{Corollary}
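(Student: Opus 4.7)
The plan is to construct $\I$ as a totally symmetric $(0,3)$-tensor that is algebraically supported in $\mD_\mu$, in the sense that $\<\I_X Y, Z\> = 0$ unless all three of $X, Y, Z$ lie in $\mD_\mu$. Such an $\I$ automatically satisfies $\I^* = \I = \I^\wedge$, hence is the contorsion of a statistical connection. I would further impose two pointwise algebraic conditions: the partial trace vanishes, $\tr^\top_\mu \I = 0$, and the contraction $\I(\cdot, \cdot, {\tilde H}_\mu) = 0$ as a symmetric bilinear form on $\mD_\mu$.

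First I would verify that any such $\I$ satisfies the Euler-Lagrange system {\rm (\ref{ELSmixIstat1}-c)} of Theorem~\ref{statisticalcritSmixI}. Since the support condition forces $\tr^\top_\nu \I = 0$ for all $\nu \ne \mu$, combined with our assumption $\tr^\top_\mu \I = 0$, every $\tr^\perp_\nu \I = \sum_{\lambda \ne \nu} \tr^\top_\lambda \I$ vanishes as well, so \eqref{ELSmixIstat1} holds. In \eqref{ELSmixIstat2}, the $\tr^\perp_\mu \I$ term drops out and $P_\mu^\perp \I_X U = 0$ because $\I_X U \in \mD_\mu$ for $X, U \in \mD_\mu$, while for $\nu \ne \mu$ the support condition gives $\I_X U = 0$ when $X, U \in \mD_\nu$. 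Equation \eqref{newELIstat} is immediate.

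Next, Lemma~\ref{L-barS-S} gives $\overline{\rm S}_{\,\mD_1,\ldots,\mD_k} = {\rm S}_{\,\mD_1,\ldots,\mD_k}$, and I would check that every $\I$-dependent term in the Euler--Lagrange equation \eqref{ELamongstatistical} vanishes on each block $\mD_\nu \times \mD_\nu$. The vanishing of all traces kills lines 1--3 together with the trace-involving parts of line~4, including the term $\mathrm{Sym}((P_\nu \tr^\top_\nu \I)^\flat \otimes {\tilde H}_\nu^\flat)$. The delicate surviving term is $2\,\I_{{\tilde H}_\nu}^\flat$: for $\nu \ne \mu$, one has ${\tilde H}_\nu \in \mD_\nu$, which is not contained in $\mD_\mu$, so $\I_{{\tilde H}_\nu} = 0$; for $\nu = \mu$, the term equals $2\,\I(\cdot, \cdot, {\tilde H}_\mu)$, which is zero by the second constraint on $\I$. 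What remains in \eqref{ELamongstatistical} is precisely the Euler--Lagrange equation \eqref{ElmixDDvp} for the Levi-Civita connection, which holds by the assumed criticality of $g$ for \eqref{Eq-Smix-g0}.

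The main obstacle is the existence of a non-zero $\I$ with the required pointwise properties, and this is where the hypothesis on $n_\mu$ and ${\tilde H}_\mu$ enters. At each point, the admissible fibre is the intersection of two subspaces of the $\binom{n_\mu+2}{3}$-dimensional space of totally symmetric $3$-tensors on $\mD_\mu$: the trace-free locus (codimension $n_\mu$) and the kernel of the contraction with ${\tilde H}_\mu$ (codimension at most $\binom{n_\mu+1}{2}$, and $0$ if ${\tilde H}_\mu = 0$). A direct count based on $\binom{n+2}{3} - n - \binom{n+1}{2} = n(n^2 - 7)/6$ shows the intersection has positive dimension whenever $n_\mu \ge 3$; when $n_\mu = 2$ with ${\tilde H}_\mu = 0$ the second constraint disappears and the trace-free locus has dimension $2$; finally a direct computation in an orthonormal basis of $\mD_\mu$ when $n_\mu = 2$ and ${\tilde H}_\mu \ne 0$ shows only $\I = 0$ satisfies both constraints, which explains the tightness of the hypothesis. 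Once positive fibre rank is established, a smooth non-zero $\I$ is obtained by multiplying a locally defined non-vanishing section by a bump function.
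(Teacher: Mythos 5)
Your proposal is correct and follows essentially the same route as the paper: restrict to trace-free statistical contorsions supported along $\mD_\mu$, observe that the remaining Euler--Lagrange content reduces to the linear condition $\I_{{\tilde H}_\mu}^\flat=0$ (the rest vanishing by criticality of $g$ via Lemma~\ref{L-barS-S} and \eqref{ElmixDDvp}), and conclude by a dimension count. Your count is in fact slightly more careful than the paper's (which treats the vanishing of $\tr^\top_\mu\I$ as a single condition rather than $n_\mu$ of them), but both yield a positive-dimensional solution space under the stated hypotheses.
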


\begin{proof}
The Euler-Lagrange equations 
\eqref{ELamongstatistical} do not contain derivatives of $\I$ and thus,
with a given metric $g$, are algebraic equations for $\I$.
By Lemma~\ref{L-barS-S}, we get $\overline{\rm S}_{\,\mD_1,\ldots,\mD_k} = {\rm S}_{\,\mD_1,\ldots,\mD_k}$,
thus the expression in the last four lines of \eqref{ELamongstatistical} does not depend on $\I$.
Moreover, by \eqref{ElmixDDvp} and since $g$ is critical, this expression vanishes.
We can restrict ourselves to $\I$ such that $\tr_\mu^\bot\I=0$ for all $\mu$. Then \eqref{ELSmixIstat1} is valid, and similarly as in the proof of claim \ref{item5} in Corollary~\ref{corstatcritforall}, we get $\tr_\mu^\top\I=0$ for all $\mu$.
Thus, \eqref{ELamongstatistical} reduces to the linear (with respect to $\I$) system
\begin{equation}\label{ELamongstatistical-A}
 2\,\I_{{\tilde H}_\mu}^\flat = 0,\quad \mu=1, \ldots, k\,,
\end{equation}
which must be compatible with (\ref{ELSmixIstat2},c).
From \eqref{newELIstat} we conclude that all components of $\I$ with indices from three different distributions vanish.
Then, by \eqref{ELSmixIstat2}, assumption $\tr_\mu^\bot\I=0$ for all $\mu$, and since $\I$ corresponds to a statistical connection, the only nonzero components of such $\I$ appear when its three indices belong to the same distribution.
Let $n_\mu\ge3$ for some $\mu$, then the number $\frac12\,n_\mu(n_\mu+1)$ of independent equations in \eqref{ELamongstatistical-A} is smaller than
the number
$\frac{n_\mu (n_\mu -1)(n_\mu -2)}{6} + n_\mu (n_\mu -1) + n_\mu$
of independent components of $\I$ along $\mD_\mu$
(components along $\mD_\nu$ for all $\nu\ne\mu$ can be chosen to be zero).
For example, if $n_\mu=3$, then $\frac12\,n_\mu(n_\mu+1)=6$ and (given its symmetries $\I^*=\I=\I^\wedge$) $\I$ with zero trace on $\mD_\mu$ has $10-1=9$ independent components along $\mD_\mu$ and \eqref{ELamongstatistical-A} gives 6 equations.
If $n_\mu=2$ and ${\tilde H}_\mu=0$ for some $\mu$, then there are no independent equations in \eqref{ELamongstatistical-A} for such $\mu$,
but there are $4-1=3$ independent components of $\I$ along $\mD_\mu$.
\end{proof}


\begin{Corollary}
Let $\mD_\mu\ (\mu=1,2,3)$ be distributions determined by unit orthonormal vector fields $\xi_1, \xi_2, \xi_3$ on a $3$-dimensional manifold. 
Then $\I =0$ is the only contorsion of a statistical connection critical for the action \eqref{Eq-Smix-g} with fixed~$g$, with respect to variations of \,$\I$ corresponding to statistical connections.
\end{Corollary}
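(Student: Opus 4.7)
The plan is to apply Theorem~\ref{statisticalcritSmixI} component-wise in the basis $\{\xi_1,\xi_2,\xi_3\}$. Because each $\mD_\mu$ is one-dimensional and $\I$ is statistical, so that $\I = \I^*= \I^\wedge$, the components
\[
c_{ijk} := \<\I_{\xi_i}\xi_j,\xi_k\>
\]
are totally symmetric in $i,j,k$, giving at most ten free scalars: $c_{111},c_{222},c_{333}$, the six ``two-equal-index'' components $c_{112},c_{113},c_{122},c_{133},c_{223},c_{233}$, and $c_{123}$.

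First I would dispatch $c_{123}$ using \eqref{newELIstat}: taking $X=\xi_1,Y=\xi_2,Z=\xi_3$ from three distinct distributions forces $c_{123}=0$. Next, I would translate \eqref{ELSmixIstat1} into the three scalar relations obtained by computing $P_\mu\tr_\mu^\perp \I$ for $\mu=1,2,3$; for instance, $\mu=1$ gives
\[
0 = \<\I_{\xi_2}\xi_2,\xi_1\> + \<\I_{\xi_3}\xi_3,\xi_1\> = c_{122}+c_{133},
\]
and the cyclic analogues produce $c_{112}+c_{233}=0$ and $c_{113}+c_{223}=0$.

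Then I would expand \eqref{ELSmixIstat2} with $X=U=\xi_\mu$, which reads $P_\mu^\bot(2\I_{\xi_\mu}\xi_\mu + \tr_\mu^\perp\I)=0$; projecting on the two orthogonal directions produces two scalar equations per $\mu$, hence six in total. For example, $\mu=1$ yields
\begin{align*}
2c_{112}+c_{222}+c_{233} &= 0, \\
2c_{113}+c_{223}+c_{333} &= 0,
\end{align*}
and the analogues for $\mu=2,3$ are obtained by cyclic permutation of the ``pivot'' index.

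Finally, I would solve the resulting $9\times 9$ linear system group by group. The nine components split into three independent triples according to which pair of indices is distinguished: e.g., substituting $c_{233}=-c_{112}$ (from \eqref{ELSmixIstat1}) into the two $\mu$-equations that feature $c_{112}$ gives both $c_{112}+c_{222}=0$ and $-c_{112}+c_{222}=0$, forcing $c_{112}=c_{222}=c_{233}=0$. The two analogous triples kill the remaining six components, and $c_{111}$ is killed as a by-product of the same elimination (via the equations $2c_{122}+c_{111}+c_{133}=0$ and $2c_{133}+c_{111}+c_{122}=0$ combined with $c_{122}+c_{133}=0$). Thus every component of $\I$ vanishes, so $\I=0$.

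The only genuine task is bookkeeping: keeping careful track of which components correspond to which projections in the two equations of \eqref{ELSmixIstat2} and exploiting the total symmetry of $c_{ijk}$ to see that each group of three equations is self-contained. There is no analytic obstacle, the argument being purely linear algebra over $\RR$ once Theorem~\ref{statisticalcritSmixI} is unwound.
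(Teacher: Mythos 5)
Your proposal is correct and follows essentially the same route as the paper: both unwind the Euler--Lagrange equations of Theorem~\ref{statisticalcritSmixI} into scalar relations among the totally symmetric components $\<\I_{\xi_i}\xi_j,\xi_k\>$ in the frame $\{\xi_1,\xi_2,\xi_3\}$ (using \eqref{newELIstat} for the all-distinct component, \eqref{ELSmixIstat1} for the trace relations, and \eqref{ELSmixIstat2} with $X=U=\xi_\mu$) and solve the resulting linear system to force $\I=0$. Your explicit block decomposition of the $9\times 9$ system is just a cleaner bookkeeping of the same eliminations the paper performs.
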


\begin{proof}
From \eqref{ELSmixIstat2} we obtain
$\< \I_{\xi_2} \xi_2, \xi_1 \> = \frac{1}{2}\,\< \xi_1, \I_{\xi_1} \xi_1 + \I_{\xi_3 } \xi_3 \>$.
By this and \eqref{ELSmixIstat2}, we~get
\begin{eqnarray*}
\< \I_{\xi_3} \xi_3, \xi_1 \>
&=& \frac{1}{2}\,\< \xi_1, \I_{\xi_1} \xi_1 \> + \frac{1}{4}\,\< \I_{\xi_1 } \xi_1, \xi_1 \> + \frac{1}{4}\,\< \I_{\xi_3 } \xi_3, \xi_1 \>\, .
\end{eqnarray*}
Hence, $\< \I_{\xi_3} \xi_3, \xi_1 \> = \< \I_{\xi_1 } \xi_1, \xi_1 \>$.
Similarly, $\<\I_{\xi_2} \xi_2, \xi_1\> = \<\I_{\xi_1}\xi_1, \xi_1\>$. Hence, from \eqref{ELSmixIstat1} we obtain
 $0 = \< \xi_1, \tr_1^\perp \I \> = 2 \< \xi_1, \I_{\xi_1} \xi_1 \>$.
Similarly, $\< \I_{\xi_2} \xi_2, \xi_2 \> = \< \I_{\xi_3} \xi_3, \xi_3 \> =0$. From \eqref{newELIstat} we get $\< \I_{\xi_1} \xi_2, \xi_3 \> =0$,
and by \eqref{ELSmixIstat2} we find
\begin{eqnarray*}
 \< \I_{\xi_1} \xi_1, \xi_2 \>
 =\frac{1}{4}\,\< \xi_2, \I_{\xi_1} \xi_1 \> + \frac{1}{4}\,\< \xi_2, \I_{\xi_2} \xi_2 \> =\frac{1}{4}\,\< \I_{\xi_1} \xi_1, \xi_2 \>\,.
\end{eqnarray*}
Hence, $\< \I_{\xi_1} \xi_1, \xi_2 \> =0$. Therefore, all components of $\I$ in the frame
$\{ \xi_1,\xi_2,\xi_3 \}$ vanish.
\end{proof}

\section{Critical metrics and metric-compatible connections}
\label{sec-metricconnections}

In this part, we apply the results of Sections~\ref{sec:adapted-metric} and \ref{sec:contorsion}
and consider particular cases of pairs $(g,\I)$ critical for \eqref{Eq-Smix-g}, where $\I$ is the contorsion of a metric-compatible connection for $g$.
We restrict ourselves to cases where $\delta_g\bar Q_\mu$, which usually has a complicated form, can be written explicitly
using our results for two distributions \cite{rz-3}.
First we consider semi-symmetric connections, which are critical only among semi-symmetric connections -- this condition is sufficient to determine them
in terms of metric.
The second case we examine are metric connections on 3-Sasaki manifolds, which carry four naturally defined totally geodesic orthogonal distributions.

\subsection{Semi-symmetric connections}
\label{sec:contorsion_semi_symmetric}


A useful case of metric-compatible connections are semi-symmetric connections introduced by K.\,Yano~\cite{Yano}.

\begin{Definition}\rm
A linear connection $\bar\nabla$ on $(M,g)$ is said to be \textit{semi-symmetric} if
\begin{equation}\label{Uconnection}
 \bar\nabla_XY=\nabla_XY + \<U, Y\> X -\<X,Y\>U,\quad X,Y\in\mathfrak{X}_M\,,
\end{equation}
where $U$
is a given vector field on $M$.
In this case,
$\I_XY=\<U, Y\> X -\<X,Y\>U$\,.
\end{Definition}



\begin{Theorem}
A pair $(g, U)$, where $g$ is an adapted metric on $(M;\mD_1,\ldots\mD_k)$, and a vector field $U$ corresponds to a semi-symmet\-ric connection
on $M$, is critical for \eqref{Eq-Smix-g} with respect to adapted variations of metric preserving the volume of $\Omega$
if and only if the following Euler-Lagrange equations
with ${\delta Q}_\mu$ from Proposition~\ref{C-vr-terms}
are satisfied for all $\mu\in\{1,\ldots,k\}$ and some~$\lambda\in\RR$:
\begin{eqnarray}\label{UELD}
\nonumber
&&{\delta Q}_\mu -\frac12\,\big({n_\mu^\perp(n_\mu-1)} + \sum\nolimits_{\,\nu\ne\mu} n_\nu(n_\nu^\perp-1)\big) P_\mu U^\flat\otimes P_\mu U^\flat \\
\nonumber
&& +\,\frac14\, \Div\big( (n_\mu-n_\mu^\bot) P_\mu^\bot U + \sum\nolimits_{\,\nu\ne\mu} (n_\nu^\bot-n_\nu) P_\nu U\big) g_\mu \\
&& +\,\big(\,{\rm S}_{\,\mD_1,\ldots,\mD_k} -\frac12\,\Div \sum\nolimits_{\nu} (  - n_\nu^\perp P_\nu U - n_\nu P_\nu^\perp U +  H_\nu +\tilde{H}_\nu) +\lambda\big) g_\mu = 0\,.
\end{eqnarray}
\end{Theorem}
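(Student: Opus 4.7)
The theorem follows by specializing the Euler-Lagrange equation \eqref{ElmixDDvp-b} of Theorem~\ref{T-main01} to the semi-symmetric contorsion tensor \eqref{Uconnection}. First I read off the auxiliary objects from $\I_XY=\<U,Y\>X-\<X,Y\>U$: one obtains $\I^*=-\I$ (which is the metric-compatibility identity), $\I^\wedge_XY=\<U,X\>Y-\<X,Y\>U$ with $\I^{*\wedge}=-\I^\wedge$, and the partial traces $\tr_\mu^\top\I=P_\mu U-n_\mu U$, $\tr_\mu^\bot\I=P_\mu^\bot U-n_\mu^\bot U$. Hence
\[
\tfrac12\bigl(P_\nu\tr_\nu^\bot(\I-\I^*)+P_\nu^\bot\tr_\nu^\top(\I-\I^*)\bigr)=-n_\nu^\bot P_\nu U-n_\nu P_\nu^\bot U,
\]
so the vector field $Y$ of \eqref{E-prop-X} assumes exactly the form whose divergence appears in the final line of~\eqref{UELD}.

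Next I eliminate $\overline{\rm S}_{\,\mD_1,\ldots,\mD_k}$ in favor of ${\rm S}_{\,\mD_1,\ldots,\mD_k}$. Writing \eqref{E-Q1Q2-gen} for $\bar\nabla$ and separately for $\nabla$ and subtracting yields $2(\overline{\rm S}_{\,\mD_1,\ldots,\mD_k}-{\rm S}_{\,\mD_1,\ldots,\mD_k})=\Div(Y-Y_0)+\sum_\mu\bar Q(\mD_\mu,g,\I)$, where $Y_0=\sum_\nu(H_\nu+\tilde H_\nu)$. Substituting into \eqref{ElmixDDvp-b} turns the $g_\mu$-coefficient into ${\rm S}_{\,\mD_1,\ldots,\mD_k}-\tfrac12\Div Y_0+\tfrac12\sum_\mu\bar Q(\mD_\mu,g,\I)+\lambda$; matching this against \eqref{UELD} then amounts to showing that the residual algebraic piece $\tfrac12\sum_\mu\bar Q(\mD_\mu,g,\I)\,g_\mu+\tfrac12\Div(Y-Y_0)\,g_\mu$ accounts for the extra $\tfrac14\Div V_\mu\,g_\mu$ term in \eqref{UELD} (with $V_\mu=(n_\mu-n_\mu^\bot)P_\mu^\bot U+\sum_{\nu\ne\mu}(n_\nu^\bot-n_\nu)P_\nu U$), modulo quadratic-in-$U$ contributions that combine with $\delta_g\bar Q_\mu$.

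The main computation is thus the explicit evaluation of $\bar Q(\mD_\mu,g,\I)$ and $\delta_g\bar Q_\mu$ for the semi-symmetric $\I$. Plugging the explicit form of $\I$ into \eqref{E-barQ} turns every inner product into a polynomial in $U$, $P_\nu U$, $H_\nu$, $\tilde H_\nu$, $h_\nu$ and $T_\nu$, and the $\mD_\mu$-variation is obtained from the general $k=2$ formulas of \cite[Lemma~3]{rz-3} (extended to $k>2$ by the same summation argument used to derive Lemma~\ref{P-dT-3} from its $k=2$ counterpart in the statistical case), simplified via $\I^*=-\I$. The quadratic-in-$U$ contributions arising from the terms $\<\I_{\mu,a},\I_{\mu,b}\>_{|\mD_\mu^\bot}$, $\<\I_{\mu,a}E_{\mu,b},\tr_\mu^\bot\I\>$ and their duals on $\mD_\mu^\bot$ collect into the coefficient $-\tfrac12\bigl(n_\mu^\bot(n_\mu-1)+\sum_{\nu\ne\mu}n_\nu(n_\nu^\bot-1)\bigr)$ of $P_\mu U^\flat\otimes P_\mu U^\flat$ stated in \eqref{UELD}.

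The main obstacle is precisely this bookkeeping. The general $\delta_g\bar Q_\mu$ contains many cross-terms coupling $U$ with the shape operators, the integrability tensors, and the mean curvatures; the metric-compatibility identities $\I^*=-\I$ and $\I^{*\wedge}=-\I^\wedge$ cause numerous pairwise cancellations, and the remaining cross-terms must recombine with the algebraic piece $\tfrac12\sum_\mu\bar Q(\mD_\mu,g,\I)\,g_\mu$ to give precisely the stated quadratic and divergence terms. Once this reorganization is completed, \eqref{ElmixDDvp-b} assumes the form \eqref{UELD}.
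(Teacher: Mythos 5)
Your overall route is the same as the paper's: specialize the general Euler--Lagrange equation \eqref{ElmixDDvp-b} of Theorem~\ref{T-main01} to the semi-symmetric contorsion. Your preliminary algebra is correct and matches what the paper uses: $\I^*=-\I$, $\tr_\mu^\top\I=P_\mu U-n_\mu U$, $\tr_\mu^\bot\I=P_\mu^\bot U-n_\mu^\bot U$, hence $P_\mu\tr_\mu^\perp\I=-n_\mu^\perp P_\mu U$ and $P_\mu^\perp\tr_\mu^\top\I=-n_\mu P_\mu^\perp U$, which identifies the vector field whose divergence appears in the last line of \eqref{UELD}. Where you diverge from the paper is in how the remaining terms are produced: the paper simply imports from \cite[Lemma~6]{rz-3} both the pointwise formula \eqref{QforUconnection} for $\bar Q(\mD_\nu,g,U)$ and the explicit $\mD_\mu$-variation $\dt\bar Q(\mD_\nu,g_t,U)$, and reads off $\delta_g\bar Q_\mu$ as the quadratic term plus the $\frac14\Div(\cdot)\,g_\mu$ term of \eqref{UELD}; you instead propose to recompute $\delta_g\bar Q_\mu$ from the general $k=2$ variational formulas and, separately, to trade $\overline{\rm S}_{\,\mD_1,\ldots,\mD_k}$ for ${\rm S}_{\,\mD_1,\ldots,\mD_k}$ via the difference of the two instances of \eqref{E-Q1Q2-gen}. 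That last step is one the paper's proof does not spell out at all, and flagging it is to your credit.

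The gap is that the decisive computation is only announced, never performed. The entire content of the theorem is the pair of coefficients $-\frac12\bigl(n_\mu^\perp(n_\mu-1)+\sum_{\nu\ne\mu}n_\nu(n_\nu^\perp-1)\bigr)$ and $\frac14\Div\bigl((n_\mu-n_\mu^\bot)P_\mu^\bot U+\sum_{\nu\ne\mu}(n_\nu^\bot-n_\nu)P_\nu U\bigr)$, and your plan asserts these will ``collect'' from the bookkeeping without verifying a single one of them; the paper at least anchors them in a quotable lemma. Moreover, your reorganization creates an extra obligation you do not discharge: once $\overline{\rm S}$ is rewritten as ${\rm S}-\frac12\Div Y_0+\frac12\sum_\nu\bar Q(\mD_\nu,g,U)$, the term $\frac12\sum_\nu\bar Q(\mD_\nu,g,U)\,g_\mu$ is, by \eqref{QforUconnection}, a genuinely quadratic expression in $U$ (containing $\|U\|^2$, $\|P_\nu U\|^2$ and $\<U,H_\nu-\tilde H_\nu\>$) multiplied by $g_\mu$ --- and no such term survives in \eqref{UELD}. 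So your route requires an exact cancellation of these pieces against the $g_\mu$-proportional part of $\delta_g\bar Q_\mu$ and the divergence corrections, which is a nontrivial identity that the proposal leaves entirely unchecked. Until that cancellation and the two stated coefficients are actually computed, the proof is an outline rather than a proof.
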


\begin{proof}
Let $\bar\nabla$ be a semi-symmetric connection on $(M,g,\mD_\mu,\mD_\mu^\bot)$, then \eqref{E-barQ} reduces to
\begin{equation}\label{QforUconnection}
-2\,\bar Q(\mD_\nu,g,U) =(n_\nu-n_\nu^\bot) \< U,H_\nu - \tilde H_\nu\> + n_\nu^\bot n_\nu \<U,\,U\> -n_\nu\<P^\bot_\nu U, P^\bot_\nu U\>
-n_\nu^\bot\< P_\nu U, P_\nu U\>\,,
\end{equation}
see \cite[Lemma~6(a)]{rz-3} for $k=2$.
For a $\mD_\mu$-variation of metric, up to divergences of compactly supported vector fields we get (see \cite[Lemma~6(b)]{rz-3} for $k=2$),
\[
\dt\bar Q(\mD_\nu,g_t,U)|_{\,t=0}
=\bigg\{
\begin{array}{cc}
\<{B}_\mu, \frac14\,(n_\nu^\bot-n_\nu)(\Div P_\nu U) g_\mu -\frac12\,n_\nu(n_\nu^\bot-1)P_\nu^\perp U^\flat\otimes P_\nu^\perp U^\flat\>, & \nu\ne\mu\,, \\
\hskip-2mm\<{B}_\mu, \frac14(n_\mu-n_\mu^\bot)(\Div P_\mu^\bot U) g_\mu -\frac12n_\mu^\perp(n_\mu-1) P_\mu U^\flat\otimes P_\mu U^\flat\>
& \nu = \mu\,.
\end{array}
\]
From \cite[Lemma 6]{rz-3} we get
 $P_\mu \tr_\mu^\perp \I = -n_\mu^\perp P_\mu U$
 and
 $P^\perp_\mu \tr_\mu \I = -n_\mu P_\mu^\perp U$\,.
Using the above and  
\eqref{E-dt-barQ} in \eqref{ElmixDDvp-b}, we get \eqref{UELD}.
\end{proof}

\begin{Remark}\rm
 One can present \eqref{UELD} in the equivalent form of \eqref{E-geom}, see \cite[Section~3.4]{rz-3} for $k=2$,
using the Ricci type tensor $\overline\Ric_{\,\mD\,|\,\mD_\mu\times\mD_\mu} = -{\delta Q}_\mu -{\delta_g\bar Q}_\mu +\rho_\mu\,g_\mu$,
see \eqref{E-main-0ij-kk}.
\end{Remark}

We consider variations of a semi-symmetric connection only among connections that also satisfy \eqref{Uconnection} and obtain the following result.

\begin{Corollary}
A semi-symmetric connection defined by $U$ is critical for the action \eqref{Eq-Smix-g} with fixed $g$, with respect to variations among semi-symmetric connections if and only if
\begin{equation} \label{Ucritcontorsion}
\sum\nolimits_{\,\mu} \big( (n_\mu - n_\mu^\perp) ( H_\mu - {\tilde H}_\mu )
+ 2 n_\mu (n_\mu^\perp - 1) P_\mu^\perp U + 2n_\mu^\perp (n_\mu -1) P_\mu U\big) =0\,.
\end{equation}
\end{Corollary}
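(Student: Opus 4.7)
The plan is to exploit the explicit form \eqref{QforUconnection} of $\bar Q(\mD_\mu,g,U)$ for a semi-symmetric connection, together with the decomposition formula \eqref{E-Q1Q2-gen} of Proposition~\ref{P-decomp2}. Since $g$ is fixed and the only dependence on the connection enters through $\sum_\mu \bar Q(\mD_\mu,g,U)$ and through the vector field $Y$ in \eqref{E-prop-X}, I would first observe that for a compactly supported variation $U_t$ of $U$ with $\dot U=\dt U_t|_{t=0}$, the tensor $\dot\I$ (and hence $\dt Y$) is compactly supported in $\Omega$, so $\int_\Omega\Div(\dt Y)\,{\rm d}\vol_g=0$ by the Divergence Theorem. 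Together with the fact that $Q(\mD_\mu,g)$ does not depend on $\I$, this reduces the Euler-Lagrange equation $\delta_\I\bar J_\mD=0$ (restricted to semi-symmetric variations) to
\[
\sum\nolimits_{\,\mu}\int_\Omega\dt\bar Q(\mD_\mu,g,U_t)|_{t=0}\,{\rm d}\vol_g=0
\]
for every compactly supported $\dot U$.

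Next I would differentiate \eqref{QforUconnection} in $t$. Since $g$ is fixed and $P_\mu$, $P_\mu^\bot$ are self-adjoint projectors, for each $\mu$,
\[
-2\,\dt\bar Q(\mD_\mu,g,U_t)|_{t=0}=(n_\mu-n_\mu^\bot)\<\dot U,H_\mu-\tilde H_\mu\>+2n_\mu^\bot n_\mu\<U,\dot U\>-2n_\mu\<P_\mu^\bot U,\dot U\>-2n_\mu^\bot\<P_\mu U,\dot U\>.
\]
Writing $U=P_\mu U+P_\mu^\bot U$ in the second term and regrouping gives
\[
2n_\mu^\bot n_\mu\<U,\dot U\>-2n_\mu\<P_\mu^\bot U,\dot U\>-2n_\mu^\bot\<P_\mu U,\dot U\>=2n_\mu^\bot(n_\mu-1)\<P_\mu U,\dot U\>+2n_\mu(n_\mu^\bot-1)\<P_\mu^\bot U,\dot U\>,
\]
so that $-2\,\dt\bar Q(\mD_\mu,g,U_t)|_{t=0}$ is the inner product of $\dot U$ with the $\mu$-th summand on the left-hand side of \eqref{Ucritcontorsion}.

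Summing over $\mu$ and applying the fundamental lemma of the calculus of variations to arbitrary compactly supported $\dot U$ yields \eqref{Ucritcontorsion}. The converse direction follows because the expression on the left of \eqref{Ucritcontorsion} is, by construction, the $L^2$-gradient of the action with respect to $U$ at fixed $g$. The only delicate point is the boundary argument that eliminates $\Div\dt Y$; this is the same mechanism used in the proof of Theorem~\ref{T-main01} (variations compactly supported in $\Omega$, Divergence Theorem applied to $\dt Y$), so no new obstacle arises. The purely algebraic rearrangement from $2n_\mu^\bot n_\mu U$ into the split form involving $P_\mu U$ and $P_\mu^\bot U$ is the only identity one needs to verify carefully, and it is immediate once one splits $U$ along $\mD_\mu\oplus\mD_\mu^\bot$.
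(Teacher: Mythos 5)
Your proposal is correct and follows essentially the same route as the paper: the paper's proof also proceeds from the decomposition \eqref{E-Q1Q2-gen} and the explicit formula \eqref{QforUconnection}, differentiating the latter in $U$ and discarding the divergence term as in the $k=2$ case of the cited reference. The algebraic regrouping of $2n_\mu^\bot n_\mu\<U,\dot U\>$ after splitting $U=P_\mu U+P_\mu^\bot U$ is verified correctly and reproduces \eqref{Ucritcontorsion}.
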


\begin{proof}
The proof follows from \eqref{E-Q1Q2-gen} and \eqref{QforUconnection}, similarly as in \cite[Proposition 10]{rz-3}.
\end{proof}

\begin{Corollary} \label{corsemisymmetricharmdim}
Let $k>2$ or $n_1+n_2 >2$. If all $\mD_\mu$ are harmonic or all $\mD_\mu$ have equal dimension, then the Levi-Civita connection is the only semi-symmetric connection critical for the action \eqref{Eq-Smix-g} with fixed metric, with respect to variations among semi-symmetric connections.
\end{Corollary}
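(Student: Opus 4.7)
The plan is to substitute the structural hypotheses into the critical-point identity \eqref{Ucritcontorsion}, reduce it to a pointwise linear algebraic system in the vector field $U$, and then verify positivity of the resulting scalar coefficients. As a first simplification I would show that under either alternative the terms $(n_\mu - n_\mu^\perp)(H_\mu - {\tilde H}_\mu)$ drop out. If all $\mD_\mu$ are harmonic, then each $H_\mu = 0$, which by the formula ${\tilde H}_\mu = \sum_{\nu \ne \mu} P_\mu H_\nu$ forces ${\tilde H}_\mu = 0$ as well. If instead all $n_\mu = d$, then the prefactor $n_\mu - n_\mu^\perp = (2 - k)\,d$ is independent of $\mu$; pulling it out of the sum and invoking \eqref{defcalH} gives $\sum_\mu (H_\mu - {\tilde H}_\mu) = {\cal H} - {\cal H} = 0$. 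In either case \eqref{Ucritcontorsion} reduces to
\[
\sum_\mu \bigl( n_\mu (n_\mu^\perp - 1)\, P_\mu^\perp U + n_\mu^\perp (n_\mu - 1)\, P_\mu U \bigr) = 0.
\]

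Next I would substitute $P_\mu^\perp U = \sum_{\nu \ne \mu} P_\nu U$ and regroup by target distribution. Because the pieces $P_\nu U$ lie in pairwise orthogonal subbundles, projecting onto each $\mD_\nu$ decouples the identity into the scalar system
\[
c_\nu\, P_\nu U = 0, \qquad c_\nu := n_\nu^\perp (n_\nu - 1) + \sum_{\mu \ne \nu} n_\mu (n_\mu^\perp - 1), \qquad \nu = 1,\ldots,k.
\]
The desired conclusion $U = 0$, equivalent to $\bar\nabla = \nabla$, then reduces to the purely arithmetic statement that every $c_\nu$ is strictly positive under the hypothesis ``$k > 2$ or $n_1 + n_2 > 2$''.

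For the equal-dimensions alternative a direct substitution yields $c_\nu = (k-1)\,d\,(n - 2)$, which is positive precisely when $n = kd > 2$; and under equal dimensions this is equivalent to the stated hypothesis. For the harmonic alternative with $k \ge 3$, the leading summand $n_\nu^\perp (n_\nu - 1) \ge k - 1 \ge 2$ handles $n_\nu \ge 2$; in the remaining case $n_\nu = 1$, the relations $n_\mu^\perp = n - n_\mu$ and $\sum_\mu n_\mu = n$ rule out $n_\mu = n - 1$ holding for all $k - 1 \ge 2$ indices $\mu \ne \nu$, so at least one residual summand $n_\mu (n_\mu^\perp - 1)$ contributes $\ge 1$. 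The main arithmetic obstacle is the borderline harmonic configuration with $k = 2$ and a one-dimensional factor, where $c_\nu$ can collapse; here the disjunctive hypothesis is exploited to ensure the remaining factor has dimension at least two, restoring positivity and forcing $U = 0$.
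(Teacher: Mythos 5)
Your reduction of \eqref{Ucritcontorsion} to the decoupled scalar system $c_\nu\,P_\nu U=0$ with $c_\nu=n_\nu^\perp(n_\nu-1)+\sum_{\mu\ne\nu}n_\mu(n_\mu^\perp-1)$ is correct and is essentially the computation the paper performs; your handling of the equal-dimension case (where indeed $c_\nu=(k-1)d(n-2)$) and of the harmonic case with $k\ge3$ matches the paper and is, if anything, argued more carefully. The gap is in your last step, the harmonic case with $k=2$ and a one-dimensional factor. You claim the hypothesis $n_1+n_2>2$ ``restores positivity'', but it does not: for $k=2$ one has $n_1^\perp=n_2$ and $n_2^\perp=n_1$, so $c_1=n_2(n_1-1)+n_2(n_1-1)=2n_2(n_1-1)$, which vanishes whenever $n_1=1$ no matter how large $n_2$ is. In that configuration the system only forces $P_2U=0$ and leaves $P_1U$ completely unconstrained, so positivity cannot be recovered from the stated hypothesis and your argument does not close.

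Be aware that this is not merely a defect of your write-up: the paper's own proof asserts at the same point that the coefficient of $P_\mu U$ ``vanishes only for $k=2$ and $n_1=n_2=1$'', which is the same incorrect claim. Concretely, on $S^1\times S^2$ with the product metric and $\mD_1=TS^1$, $\mD_2=TS^2$ (both harmonic, indeed totally geodesic), one computes from \eqref{QforUconnection} that $-2\sum_\nu\bar Q(\mD_\nu,g,U)=2\,\|P_2U\|^2$, so every vector field $U$ tangent to the circle factor yields a critical semi-symmetric connection different from the Levi-Civita connection, even though $n_1+n_2=3>2$. The harmonic alternative of the corollary therefore needs either $k>2$ or $\min(n_1,n_2)\ge2$ (or a weakened conclusion allowing $U$ tangent to a one-dimensional factor); your instinct that this borderline configuration is ``the main arithmetic obstacle'' was right, but the obstacle is genuine rather than removable.
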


\begin{proof}
First, we assume that all $\mD_\mu$ are harmonic. Then we get from \eqref{Ucritcontorsion} for each $\mu =1,\ldots, k$:
\[
\big( 2n_\mu^\perp (n_\mu -1) + \sum\nolimits_{\,\nu \neq \mu} 2 n_\nu (n_\nu^\perp - 1) \big) P_\mu U  =0\,.
\]
The coefficient before $P_\mu U$ is non-negative and vanishes only for $k=2$ and $n_1=n_2=1$, which contradicts the assumption about dimensions of distributions. Hence, it follows that $P_\mu U=0$ for all $\mu =1 ,\ldots , k$, and therefore 
$U=0$.
%
For the second case, let $\dim \mD_\mu = m$ for all $\mu =1 ,\ldots , k$, then $\dim M = km$ and from \eqref{calH}, and $P_\nu^\perp = U - P_\nu U$, we obtain in \eqref{Ucritcontorsion}
\begin{equation} \label{Ucritcontorsion1}
 \big( 2m ( km - m -1) (k-1) + 2 (km-m)(m-1) \big) U = 0\,.
\end{equation}
Since the coefficient before $U$ is $2m(k-1)(km-2)\ne0$, for $k>2$ or $m>1$ we get $U=0$.
\end{proof}

From 
Proposition~\ref{propEinstein}
and 
\eqref{Ucritcontorsion1}, similarly as in the proof of Corollary \ref{corsemisymmetricharmdim}
we recover the following result, obtained 
in \cite{Fasihi}.

\begin{Corollary}
If $\dim M>2$, then the Levi-Civita connection is the only semi-symmetric connection critical for the Einstein-Hilbert action.
\end{Corollary}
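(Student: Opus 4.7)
The plan is to follow the template of the equal-dimension case of Corollary~\ref{corsemisymmetricharmdim} with $m=1$, specialized to a local decomposition of $TM$ into one-dimensional distributions, using Proposition~\ref{propEinstein} to bridge from the Einstein-Hilbert action~\eqref{Eq-EH} to the mixed scalar curvature action~\eqref{Eq-Smix-g}.

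First, I would fix a point $p\in M$ and choose any local orthonormal frame $\{E_1,\ldots,E_k\}$ on a neighborhood of $p$, where $k=\dim M$. Let $\mD_\mu$ be the one-dimensional distribution spanned by $E_\mu$, so that $TM=\mD_1+\ldots+\mD_k$ on that neighborhood. By Proposition~\ref{propEinstein}, if $(g,\bar\nabla)$ is critical for the Einstein-Hilbert action, then the Euler-Lagrange equation~\eqref{E-delta-I-J} of \eqref{Eq-Smix-g} (with fixed~$g$) holds for this decomposition. A fortiori, $(g,\bar\nabla)$ is critical among the smaller family of variations of $\I$ preserving the semi-symmetric form~\eqref{Uconnection}, so the equation \eqref{Ucritcontorsion} of the preceding corollary is satisfied.

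Second, I would specialize \eqref{Ucritcontorsion} to the one-dimensional setting, where $n_\mu=1$ and $n_\mu^\perp=k-1$ for every $\mu$. The factor $n_\mu-n_\mu^\perp=2-k$ does not depend on $\mu$, so by \eqref{defcalH}
\[
\sum\nolimits_\mu (n_\mu-n_\mu^\perp)(H_\mu-\tilde H_\mu) = (2-k)\Big(\sum\nolimits_\mu H_\mu - \sum\nolimits_\mu \tilde H_\mu\Big) = 0.
\]
The $U$-terms simplify exactly as in the proof of Corollary~\ref{corsemisymmetricharmdim}: using $\sum_\mu P_\mu^\perp U=(k-1)U$ and the vanishing of the coefficient $2 n_\mu^\perp(n_\mu-1)=0$, they collapse to $2(k-1)(k-2)\,U$. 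Hence \eqref{Ucritcontorsion} reduces to $2(k-1)(k-2)\,U=0$, which forces $U=0$ whenever $k=\dim M>2$, so $\bar\nabla=\nabla$.

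The argument is essentially a reassembly of previously established ingredients, so there is no genuine obstacle. The only point requiring care is the cancellation of the mean-curvature terms $H_\mu-\tilde H_\mu$, which succeeds precisely because all one-dimensional distributions share the common dimension and the coefficient $(n_\mu-n_\mu^\perp)$ factors out of the sum; this is the same mechanism that made the equal-dimension case of Corollary~\ref{corsemisymmetricharmdim} work.
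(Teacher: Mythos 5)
Your proposal is correct and follows essentially the same route as the paper: the paper also invokes Proposition~\ref{propEinstein} to pass to a local decomposition into one-dimensional distributions and then applies \eqref{Ucritcontorsion} (in the form \eqref{Ucritcontorsion1} with $m=1$), obtaining the coefficient $2(k-1)(k-2)$ and hence $U=0$ for $\dim M>2$. Your verification of the cancellation of the $H_\mu-\tilde H_\mu$ terms via \eqref{defcalH} and the computation $\sum_\mu P_\mu^\perp U=(k-1)U$ are exactly the details the paper leaves implicit.
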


By \eqref{Ucritcontorsion}, connection critical among semi-symmetric connections is determined by metric. Solving \eqref{Ucritcontorsion} for $U$ and inserting the result in \eqref{UELD} yields equation that determines metrics $g$ admitting semi-symmetric contorsions $\I$ such that the pair $(g,\I)$ is critical for \eqref{Eq-Smix-g} restricted to adapted metrics and contorsions of semi-symmetric connections.




\subsection{3-Sasaki manifolds}
\label{sec:5-Sasaki}

In this section, we consider metric-compatible connections on a 3-Sasaki manifold $(M^{4m+3},g)$ with $m>0$ (see \cite{blair,bg}),
which has four naturally defined orthogonal distributions.
Let $\mD_1,\mD_2,\mD_3$ be the one-dimensional distributions spanned by Reeb fields $\xi_1, \xi_2, \xi_3$, and let $\mD_4$ be their orthogo\-nal complement.
We~will write ${\tilde T}^\sharp_{\xi_a }$ instead of ${\tilde T}^\sharp_{a, \xi_a}$ for $a\le 3$.
From \cite{rz-2} we get
\begin{equation}\label{Eq-3Sas}
 \nabla_Y \xi_a = - {\tilde T}^\sharp_{\xi_a} Y\quad (Y \in TM,\quad a\le 3)\,,
\end{equation}
and $\tT_{\xi_a} \tT_{\xi_a} = - \id |_{\,\mD_a^\perp }$, $\tT_{\xi_a}{\xi_b} = {\xi_c}$ for even permutation of $a,b,c$.
Using \eqref{Eq-3Sas}, we can formulate

\begin{Lemma} \label{lemSasakigeodint}
For a 3-Sasaki manifold $(M,g)$, the distributions $\mD_\mu\ (\mu\le 4)$ are totally geodesic, pairwise mixed totally geodesic,
and $\mD_4$ is mixed integrable with every $\mD_a\ (a\le 3)$.
\end{Lemma}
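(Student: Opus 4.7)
The plan is to derive all three claims from the identity \eqref{Eq-3Sas}, $\nabla_Y\xi_a=-\tT_{\xi_a}Y$, together with three features of the operators $\tT_{\xi_a}$ that are readily available from the 3-Sasaki relations already listed in the excerpt: (i)~$\tT_{\xi_a}$ is skew-adjoint on $\mD_a^\perp$ (an immediate consequence of the antisymmetry of the integrability tensor $\tilde T_a$ in its two arguments); (ii)~$\tT_{\xi_a}\xi_a=0$, so that \eqref{Eq-3Sas} is consistent with $\nabla_{\xi_a}\xi_a=0$; and (iii)~$\tT_{\xi_a}$ preserves $\mD_4$, which follows by combining $\tT_{\xi_a}\xi_b=\xi_c$ with $\tT_{\xi_a}^2=-\id|_{\mD_a^\perp}$---together these force $\mD_b\oplus\mD_c$ to be $\tT_{\xi_a}$-invariant, hence so is its orthogonal complement $\mD_4$ inside $\mD_a^\perp$.

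Total geodesy of $\mD_a$ for $a\le 3$ is immediate from (ii), since these distributions are one-dimensional with unit generator $\xi_a$. For total geodesy of $\mD_4$, I would verify $\langle\nabla_XY+\nabla_YX,\xi_a\rangle=0$ for $X,Y\in\mD_4$ and $a\le 3$ by rewriting each summand as $\langle\nabla_XY,\xi_a\rangle=-\langle Y,\nabla_X\xi_a\rangle=\langle Y,\tT_{\xi_a}X\rangle$; the symmetrized version then vanishes by (i).

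For pairwise mixed total geodesy of $(\mD_a,\mD_b)$ with $a,b\le 3$, the only thing to check is that $\nabla_{\xi_a}\xi_b+\nabla_{\xi_b}\xi_a=-\tT_{\xi_b}\xi_a-\tT_{\xi_a}\xi_b$ has no component in $\mD_c\oplus\mD_4$; the two summands equal $\mp\xi_c$ by the stated identity (together with skew-adjointness on $\mD_a^\perp$) and cancel. For pairs $(\mD_a,\mD_4)$ and $Y\in\mD_4$, property (iii) yields $\nabla_Y\xi_a=-\tT_{\xi_a}Y\in\mD_4$, while $\nabla_{\xi_a}Y$ has no component along $\xi_b$ for $b\ne a$ because $\langle\nabla_{\xi_a}Y,\xi_b\rangle=-\langle Y,\nabla_{\xi_a}\xi_b\rangle=\langle Y,\tT_{\xi_b}\xi_a\rangle=0$, since $\tT_{\xi_b}\xi_a\in\mathrm{span}\{\xi_c\}$ is orthogonal to $Y\in\mD_4$. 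Thus both $\nabla_Y\xi_a$ and $\nabla_{\xi_a}Y$ lie in $\mD_a\oplus\mD_4$, so both their symmetric sum and their Lie bracket $[\xi_a,Y]=\nabla_{\xi_a}Y-\nabla_Y\xi_a$ have trivial projection to $(\mD_a\oplus\mD_4)^\perp$; this simultaneously gives mixed total geodesy and mixed integrability of $(\mD_a,\mD_4)$.

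The computations are purely algebraic and there is no real obstacle; the only bookkeeping burden is tracking the distribution in which each covariant derivative lives. I would therefore extract as a preliminary observation the two inclusions $\nabla_Y\xi_a\in\mD_4$ and $\nabla_{\xi_a}Y\in\mD_a\oplus\mD_4$ for $Y\in\mD_4$, after which the final case analysis reduces to a few lines.
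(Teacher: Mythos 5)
Your proof is correct. The paper actually omits the argument entirely (it merely states that the lemma follows "using \eqref{Eq-3Sas}"), and your computation — deriving skew-adjointness of $\tT_{\xi_a}$, its vanishing on $\xi_a$, and its preservation of $\mD_4$, then checking each second fundamental form and the bracket $[\xi_a,Y]$ — is exactly the intended route, carried out in full.
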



Let ${\rm Sym}(F (X,Y)) = \frac{1}{2} ( F(X,Y) + F(Y,X))$ for all $X,Y \in TM$ and all $(s,2)$-tensors $F$.

In what follows we consider 
terms appearing in \cite{rz-3} for contact manifolds, that will also appear in our Euler-Lagrange equations below.
We define the following tensors on $M$:
\begin{eqnarray*}
&& \phi_\nu (X,Y) = (\I + \I^\wedge)_{P_\nu^\perp X}\, P_\nu^\perp Y, \quad \phi_\nu^\top (X,Y) = P_\nu \phi_\nu (X,Y)\,,\\
&& {\tilde \phi}_\nu (X,Y) = (\I + \I^\wedge)_{ P_\nu^\top X}\, P_\nu^\top Y, \quad {\tilde \phi}_\nu^\perp (X,Y) = P_\nu^\perp {\tilde \phi}_\nu (X,Y)\,,\\
&& \chi_\nu (X,Y) = {\rm Sym} \big( \sum (\<X, \I_{{\cal E}_{\nu,j}}E_{\nu,a}\> \< Y, {\tilde T}^\sharp_{\nu, E_{\nu, a}}{\cal E}_{\nu,j}\> \big) \,,\\
&& {\tilde \chi}_\mu (X,Y) = {\rm Sym} \big( \sum (\< X, \I_{{\mu,j}}{\cal E}_{\mu,a}\> \< Y, T^\sharp_{\mu, {\cal E}_{\mu,a}}E_{\mu, j} \> \big) \,.
\end{eqnarray*}
Next, we calculate on a 3-Sasaki manifold values of some previously defined tensors. 

\begin{Lemma}\label{lemmadeltaQbarterms}
Let $(M,g)$ be a 3-Sasaki manifold, and let $\I$ be the contorsion tensor critical for the action \eqref{Eq-Smix-g} with fixed $g$. Then for all $X,Y \in \mD_4$ and
$a=1,2,3$:
\begin{subequations}
\begin{eqnarray}\label{tildechi4}
&& {\tilde\chi}_4(X,Y) = \frac{1}{2} {\rm Sym} \big( \sum\nolimits_{b}\<{\tilde\phi}_4(\tT_{\xi_b} Y, X) , \,\xi_b\> \big) + 3\,\<X,Y\>\,,\\
\label{chiaXY}
&& \chi_a (X,Y) = \frac{1}{2} {\rm Sym}\big( \< {\tilde \phi}_4 (\tT_{\xi_a} Y, X ), \xi_a\> \big) + \<  X,   Y\>  \, \\ 
\label{widetildeTa}
&& \widetilde{\cal T}^\flat_a = -g_a^\perp, \quad {\cal T}^\flat_4 = -3\,g_4\,, \\
\label{UpsilonTa}
&& \Upsilon_{T_a, T_a}=0, \quad \Upsilon_{ {\tilde T}_4, {\tilde T}_4}= 0\,, \\
\label{chi23xi1}
&& (\chi_2 + \chi_3)(\xi_1,\xi_1) = 0\,, \quad 
(\chi_1 + \chi_3)(\xi_2,\xi_2) = 0\,, \quad 
(\chi_1 + \chi_2)(\xi_3, \xi_3) = 0\,, \\ 
\label{chi4xia}
&& \chi_4 (\xi_a, \xi_a)=0, \quad {\tilde \chi}_a (\xi_a, \xi_a) =0\,, \\
\label{widetildecalT4}
&& \widetilde{\cal T}^\flat_4 (\xi_a, \xi_a) =0, \quad {\cal T}^\flat_a (\xi_a, \xi_a) = -2\,, \\
\label{UpsilonT4xia}
&& \Upsilon_{ T_4, T_4}(\xi_a, \xi_a) = 2\,n_4, \quad \Upsilon_{ {\tilde T}_a, {\tilde T}_a}(\xi_a, \xi_a ) = 4 + 2\,n_4\,.
\end{eqnarray}
\end{subequations}
\end{Lemma}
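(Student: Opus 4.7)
The plan is to reduce every claim to a direct computation in the frame $\{\xi_1,\xi_2,\xi_3\}$ completed by an orthonormal basis of $\mD_4$, using the structure equation \eqref{Eq-3Sas}, the identities $\tT_{\xi_a}^2=-\id|_{\mD_a^\perp}$ and $\tT_{\xi_a}\xi_b=\xi_c$ for cyclic $(a,b,c)$, and the critical-contorsion conditions of Theorem~\ref{corI} specialized to the metric-compatible case $\I=-\I^*$.

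First I will derive explicit formulas for $T_4$ and $\tilde T_a$. From $\nabla_Y\xi_a=-\tT_{\xi_a}Y$ and the skew-symmetry of $\tT_{\xi_a}$ one gets $\<[X,Y],\xi_a\>=2\<\tT_{\xi_a}X,Y\>$ for $X,Y\in\mD_4$, hence $T^\sharp_{4,\xi_a}=\tT_{\xi_a}|_{\mD_4}$; dually $\tT^\sharp_{a,\xi_a}=\tT_{\xi_a}|_{\mD_a^\perp}$. Since $\mD_a$ is one-dimensional, $T_a=0$ automatically, and $[\xi_b,\xi_c]=2\xi_a\in\mD_4^\perp$ gives $\tilde T_4=0$ (consistent with Lemma~\ref{lemSasakigeodint}). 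This immediately yields \eqref{UpsilonTa}, and then
\[
{\cal T}_4=\sum_{a=1}^3\tT_{\xi_a}^2|_{\mD_4}=-3\,\id|_{\mD_4},\quad \widetilde{\cal T}_a=\tT_{\xi_a}^2|_{\mD_a^\perp}=-\id|_{\mD_a^\perp},
\]
which is \eqref{widetildeTa}.

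Next I will handle the pointwise evaluations at $(\xi_a,\xi_a)$. The identities in \eqref{chi4xia} and the vanishing in \eqref{widetildecalT4} follow from $\tilde T_4=0=T_a$; the value ${\cal T}_a^\flat(\xi_a,\xi_a)=-2$ comes from the two nonzero contributions $\tT_{\xi_b}\xi_a,\,\tT_{\xi_c}\xi_a$. The entries in \eqref{UpsilonT4xia} are extracted from the $\Upsilon$-definition; for instance
\[
\Upsilon_{T_4,T_4}(\xi_a,\xi_a)=2\sum_{\alpha,\beta\in\mD_4}\<T_4(e_\alpha,e_\beta),\xi_a\>^2=2\sum_{\alpha}\|\tT_{\xi_a}e_\alpha\|^2=2n_4,
\]
since $\tT_{\xi_a}$ is an isometry of $\mD_4$, and the extra $4$ in $\Upsilon_{\tilde T_a,\tilde T_a}(\xi_a,\xi_a)=4+2n_4$ comes from the bracket contributions $[\xi_b,\xi_c]=\pm2\xi_a$. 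To prove \eqref{chi23xi1}, I expand $(\chi_b+\chi_c)(\xi_a,\xi_a)$ and, after using metric antisymmetry, reduce it to $\<\I_{\xi_c}\xi_b-\I_{\xi_b}\xi_c,\xi_a\>$; iterated application of \eqref{ELI5XYZ} to the three distributions $\mD_a,\mD_b,\mD_c$ then forces this combination to equal its own negative.

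The main obstacle is the pair of global identities \eqref{tildechi4} and \eqref{chiaXY}. Fixing $X,Y\in\mD_4$, I will split the summation index in $\chi_a(X,Y)$ according to $\mD_a^\perp=\mD_b\oplus\mD_c\oplus\mD_4$; the $\mD_b$- and $\mD_c$-parts vanish by the same iterated use of \eqref{ELI5XYZ} applied to the triple $(\xi_b,\xi_a,X)$ (resp.\ $(\xi_c,\xi_a,X)$), which forces $\<\I_{\xi_b}\xi_a,X\>=0$. For ${\cal E}_{a,j}=V\in\mD_4$, \eqref{ELconnectionNewI8} together with the explicit formula for $T_4$ yields
\[
\<\I_VX,\xi_a\>=\tfrac12\<\tilde\phi_4(V,X),\xi_a\>-\<\tT_{\xi_a}V,X\>;
\]
the change of variables $V\mapsto\tT_{\xi_a}V$ on the orthonormal basis of $\mD_4$ then collapses the first summand into $\tfrac12\,{\rm Sym}(\<\tilde\phi_4(\tT_{\xi_a}Y,X),\xi_a\>)$, while the isometry identity $\<\tT_{\xi_a}X,\tT_{\xi_a}Y\>=\<X,Y\>$ turns the second into $\<X,Y\>$. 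For $\tilde\chi_4$ the same computation is carried out with an additional sum over $a=1,2,3$, yielding the factor $3\<X,Y\>$. The delicate point is keeping the signs and the ${\rm Sym}$ operator consistent, because the symmetrization interacts nontrivially both with the skew-symmetry of $\tT_{\xi_a}$ and with the metric antisymmetry $\I=-\I^*$.
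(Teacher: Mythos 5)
Your proposal is correct and follows essentially the same route as the paper: explicit frame computations from \eqref{Eq-3Sas} and the quaternionic relations for $\tT_{\xi_a}$, combined with the criticality conditions \eqref{ELconnectionNewI8} and \eqref{ELI5XYZ} to split $\I$ into its $\tilde\phi_4$- and $T_4$-parts in the $\chi$-identities. The only (immaterial) deviation is that you kill the $\mD_b\oplus\mD_c$-contributions to $\chi_a(X,Y)$ for $X,Y\in\mD_4$ by an iterated use of \eqref{ELI5XYZ}, where the paper simply observes that $\<Y,\tT_{\xi_a}\xi_b\>=\<Y,\xi_c\>=0$.
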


\begin{proof}
Let $\{\xi_1, \xi_2, \xi_3, E_{1}, \ldots, E_{n_4} \}$ be an orthonormal basis on $M$.
For $X,Y \in \mD_4$ we get
\[
{\tilde \chi}_4 (X,Y) = \frac{1}{2} \sum\nolimits_{a,j} \big(\< X, \I_{ E_{j}}\xi_a\> \< Y, T^\sharp_{4, \xi_a}E_{j} \> +
\< Y, \I_{ E_{j}}\xi_a\> \< X, T^\sharp_{4, \xi_a}E_{j}\> \big),
\]
and by \eqref{ELconnectionNewI8} we obtain for all $a=1,2,3$: 
\begin{eqnarray*}
&& \sum\nolimits_{j} \< X, \I_{ E_{j}}\xi_a \> \< Y, T^\sharp_{4, \xi_a}E_{j} \>
= -\sum\nolimits_{j} \< \I_{ E_{j}}X, \xi_a \> \< Y, T^\sharp_{4, \xi_a}E_{j} \> \\
&& = -\frac{1}{2}\sum\nolimits_{j} \big(\< \I_{ E_{j}}X + \I_X E_{j}, \xi_a\> \< Y, T^\sharp_{4, \xi_a}E_{j} \>
+\< \I_{ E_{j}}X - \I_X E_{j}, \xi_a\> \< Y, T^\sharp_{4, \xi_a}E_{j} \>\big) \\
&& = \sum\nolimits_{j}\big( - \frac{1}{2}\,\< {\tilde \phi}_4 (E_{j}, X ), \xi_a\> \< \xi_a, T_4 (E_{j}, Y)\>
+ \< T_4 ({ E_{j} }, X ), \xi_a\> \< \xi_a, T_4 (E_{j}, Y )\>\big) \\
&& = \big(\frac{1}{2}\,\< {\tilde \phi}_4 (\tT_{\xi_a} Y, X ), \xi_a\> + \< \tT_{\xi_a} X, \tT_{\xi_a} Y\>\big)
= \frac{1}{2} \< {\tilde \phi}_4 (\tT_{\xi_a} Y, X ), \xi_a\> + \<X,Y\>\,.
\end{eqnarray*}
Summing the above over $a=1,2,3$ we get \eqref{tildechi4}.

Equations \eqref{widetildeTa}$_1$ follow from $\tT_{\xi_a} \tT_{\xi_a} = - \id$,
similarly, we obtain \eqref{widetildeTa}$_2$, as for $a\le 3$ we get
${\cal T}^\flat_4 = \sum\nolimits_{\,a} (\tT_{\xi_a} \tT_{\xi_a} )^\flat = -3\, g_4$.
Next, we get $\chi_1(X,Y)$ (and similarly, $\chi_2(X,Y)$ and $\chi_3(X,Y)$):
\begin{eqnarray*}
&&\hskip-4mm
\chi_1 (X,Y) = {\rm Sym}\big( \frac{1}{2} \< {\tilde \phi}_4 (\tT_{\xi_1} Y, X ), \xi_1\> {+} \<P_4 X, P_4 Y\>
 {+}\< X, \I_{\xi_2}\xi_1\> \< Y, \tT_{\xi_1}\xi_2\> {+}\< X, \I_{\xi_3} \xi_1\>\< Y, \tT_{\xi_1}\xi_3\>\big)\\
&&\hskip-6mm = {\rm Sym}\big( \frac{1}{2} \< {\tilde \phi}_4 (\tT_{\xi_1} Y, X ), \xi_1\> + \<P_4 X, P_4 Y\>
 +\< X, \I_{\xi_2} \xi_1\> \< Y, \xi_3\> -\< X, \I_{\xi_3} \xi_1\> \< Y, \xi_2\> \big),\quad X,Y \perp \mD_1\,.
\end{eqnarray*}
Hence,
we get \eqref{chiaXY}. We get \eqref{chi23xi1}
from the above computations and \eqref{ELI5XYZ} for metric-compatible connections, which yields for all $a,b,c =1,2,3$, $a \ne b \ne c \ne a$ and all $X \in \mD_4$:
\begin{equation} \label{3Sasaki3different}
0 = \< \I_X \xi_a - \I_{\xi_a} X, \xi_b\> = \< \I_{\xi_b} \xi_a - \I_{\xi_a}{\xi_b}, X\> = \< \I_{\xi_b} \xi_a - \I_{\xi_a}{\xi_b}, \xi_c\>\, .
\end{equation}
We obtain \eqref{chi4xia}$_1$ and \eqref{widetildecalT4}$_1$ from $\tT_{4,X} =0$ for all $X \in \mD_4$.

We obtain for $a,b,c \in \{ 1,2,3 \}$, $a \ne b \ne c \ne a$:
\[
 {\cal T}^\flat_a (\xi_a, \xi_a) = \sum\nolimits_{\,i} \< \tT_{4, E_{i}} \tT_{4, E_{i}} \xi_a, \xi_a\>
 + \< \tT_{\xi_b} \tT_{\xi_b} \xi_a, \xi_a\> + \< \tT_{\xi_c} \tT_{\xi_c} \xi_a, \xi_a\> = -2\,,
\]
which is \eqref{widetildecalT4}$_2$.
For $a\le 3$ \eqref{UpsilonTa}$_1$ follows from the fact that each $\mD_a$ is integrable.

For $a\le 3$ we obtain \eqref{UpsilonT4xia}$_1$ as follows:
\begin{eqnarray*}
&& \frac{1}{2} \Upsilon_{ T_4, T_4}(\xi_a, \xi_a) = \sum\nolimits_{i,j} \< \xi_a, T_a(E_{i}, E_{j} )\> \<\xi_a, T_a(E_{i}, E_{j} )\> \\
&&=\sum\nolimits_{i,j} \< \tT_{\xi_a} E_{i}, E_{j}\> \< \tT_{\xi_a} E_{i}, E_{j}\>
= - \sum\nolimits_{i} \< \tT_{\xi_a} \tT_{\xi_a} E_{i}, E_{i} \> = n_4\,,
\end{eqnarray*}
as $\tT_{\xi_a}\tT_{\xi_a} = - \id |_{\,\mD^\perp_a}$.
For \eqref{UpsilonTa}$_2$, let $X,Y \in \mD_4$, then
\[
\frac{1}{2}\Upsilon_{ {\tilde T}_4, {\tilde T}_4}(X,Y) = \sum\nolimits_{a,b} \< X, {\tilde T}_4 (\xi_a, \xi_b)\> \< Y, {\tilde T}_4 (\xi_a, \xi_b)\>=0\,,
\]
because $[\xi_a,\xi_b] \perp \mD_4$.
For \eqref{UpsilonT4xia}$_2$, let $a,b,c=1,2,3$ and $a \ne b \ne c \ne a$, then
\begin{eqnarray*}
\frac{1}{2}\,\Upsilon_{{\tilde T}_a, {\tilde T}_a}(\xi_a, \xi_a )
&=& \sum\nolimits_{b,c} \<\xi_a, {\tilde T}_a (\xi_b, \xi_c)\> \<\xi_a, {\tilde T}_a (\xi_b, \xi_c)\> \\
&& +\sum\nolimits_{i,j} \<\xi_a, {\tilde T}_a (E_{i}, E_{j})\> \<\xi_a, {\tilde T}_a (E_{i}, E_{j} )\>
= 2 + n_4\, .
\end{eqnarray*}
For \eqref{chi4xia}$_2$, let $a\le 3$, ${\cal E}_{a,j} \in \mD_a^\perp$, then
 ${\tilde \chi}_a (\xi_a, \xi_a) = \sum\nolimits_{j} \< \xi_a, \I_{\xi_a}{\cal E}_{a,j}\> \< \xi_a, T^\sharp_{a, {\cal E}_{a,j}} \xi_a\> =0$.
\end{proof}

By Lemma~\ref{lemSasakigeodint}, on a 3-Sasaki manifold $(M,g)$, each $\mD_\mu\ (\mu\le 4)$ is totally geodesic and has totally geodesic orthogonal complement. Thus, the assumptions of \cite[Lemma~4]{rz-3} are satisfied.

We additionally assume the following condition:
\begin{equation} \label{assumptionXYZ}
\< \I_{X}Y, Z \>=0\,, \quad X \in \mD_\mu ,\ Y \in \mD_\rho,\ Z \in \mD_\xi , \quad \mu \neq \rho \neq \xi \neq \mu\,,
\end{equation}
which is consistent with \eqref{ELI5XYZ}.  Note that from \eqref{3Sasaki3different} it follows that, e.g., characteristic connection on 7-dimensional 3-Sasaki manifolds \cite{AgricolaFriedrich} does not satisfy \eqref{ELI5XYZ}. Then, following the proof of \cite[Lemma~4]{rz-3}, we obtain for each $\mD_\mu$ and for a metric-compatible connection $\nabla+\I$:
\begin{eqnarray*}
2\delta_g\bar Q_\mu &=& \sum\nolimits_{\nu\ne\mu}\big(\<\phi_\nu, \frac{1}{2} \tr_\nu^\top \I\> - 2 \Div\phi_\nu^\top + 7\,\chi_\nu
-\Div (P_\nu \tr^\perp_\nu \I)\,g_\nu^\perp - 2\,\widetilde{\cal T}_\nu^\flat - \frac{3}{2}\Upsilon_{T_\nu,T_\nu}\big) \\
&& +\, \< {\tilde \phi}_\mu, \frac{1}{2} \tr_\mu^\perp \I\> - 2 \Div {\tilde \phi}_\mu^\perp + 7 {\tilde \chi}_\mu
-\Div (P_\mu^\perp \tr^\top_\mu \I )\,g_\mu - 2\, {\cal T }_\mu^\flat - \frac{3}{2} \Upsilon_{{\tilde T}_\mu,{\tilde T}_\mu}, \\
\end{eqnarray*}
and, from Proposition \ref{C-vr-terms}, we get
\begin{equation} \label{deltaQmetric}
\delta {Q}_\mu = \sum\nolimits_{\nu\ne\mu} \big(2\,\widetilde{\cal T }_\nu^\flat + \frac{1}{2} \Upsilon_{T_\nu,T_\nu}\big) + 2\,{\cal T }_\mu^\flat + \frac{1}{2} \Upsilon_{{\tilde T}_\mu,{\tilde T}_\mu},
\end{equation}
%
therefore, for $\mu\le 4$:
\begin{eqnarray} \label{deltaQQbarSasaki}
&& 2\,( \delta {Q}_\mu + \delta_g \bar Q_\mu) = \!\sum\limits_{\nu\ne\mu}\big(\< \phi_\nu, \frac{1}{2}\tr_\nu^\top \I\> {-} 2\Div\phi_\nu^\top
{+} 7\chi_\nu {-} \Div (P_\nu \tr^\perp_\nu \I)\,g_\nu^\perp {+} 2\widetilde{\cal T }_\nu^\flat {-}\frac{1}{2}\Upsilon_{T_\nu,T_\nu}\big)\nonumber \\
&& +\, \< {\tilde \phi}_\mu, \frac{1}{2} \tr_\mu^\perp \I\> - 2 \Div {\tilde \phi}_\mu^\perp + 7 {\tilde \chi}_\mu
-\Div (P_\mu^\perp \tr^\top_\mu \I ) \,g_\mu + 2\, {\cal T }_\mu^\flat -\frac{1}{2} \Upsilon_{{\tilde T}_\mu,{\tilde T}_\mu} .
\end{eqnarray}
Using the above formulas, we obtain the following presentation of the Euler-Lagrange equations for the action \eqref{Eq-Smix-g}
on a 3-Sasaki manifold.

\begin{Theorem}
Let $(M,g)$ be a 3-Sasaki manifold, and $\I$ be the contorsion tensor critical for the action \eqref{Eq-Smix-g} with fixed $g$
such that \eqref{assumptionXYZ} holds.
Then the pair $(g,\I)$ is critical for the action \eqref{Eq-Smix-g} with respect to adapted variations of $g$ preserving the volume of $(M,g)$ and all variations of $\I$ if and only if all equations of Theorem~\ref{corI} hold,  
and the following Euler-Lagrange equations \eqref{E-delta-g-J} are valid for some $\lambda\in\RR$, all $X,Y \in \mD_4$ and $a=1,2,3$:
\begin{subequations}
\begin{eqnarray} \label{ELSasakiD4}
&& \sum\nolimits_{\nu \ne 4} \big(\< \phi_\nu, \frac{1}{2} \tr_\nu^\top \I\> - 2 \Div \phi_\nu^\top \big) (X,Y)
 + 7 \sum\nolimits_{\,a} {\rm Sym} ( \< {\tilde \phi}_4 (\tT_{\xi_a} Y, X ) , \xi_a\> ) \nonumber\\
&& +\,30\, \< X,Y \> 
- \sum\nolimits_{\nu \ne 4} \Div (P_\nu \tr^\perp_\nu \I )\, \< X,Y \> \nonumber\\
&& +\,\< {\tilde \phi}_4, \frac{1}{2} \tr_4^\perp \I\> (X,Y) - 2 \Div {\tilde \phi}_4^\perp (X,Y)
- \Div( P^\perp_4 \tr^\top_4 \I )\, \< X,Y \> \nonumber\\
&& +\,(2\,\overline{\rm S}_{\,\mD_1,\mD_2,\mD_3,\mD_4}
-\sum\nolimits_{\,i}\Div( \,P_i\tr_{\,i}^\bot \I + \,P_i^\bot\tr_{\,i}^\top \I ) )g_4(X,Y) = \lambda\, \< X,Y \> \,,\\
\label{ELSasakiDa}
&& -\,9 \Div (P_a^\perp\I_{\xi_a} \xi_a) 
 - \sum\nolimits_{\nu \ne a} \Div (P_\nu \tr^\perp_\nu \I ) -10 -2n_4 \nonumber\\
&& +\,2\,\overline{\rm S}_{\,\mD_1,\mD_2,\mD_3,\mD_4}
-\sum\nolimits_{\,i}\Div( \,P_i\tr_{\,i}^\bot \I + \,P_i^\bot\tr_{\,i}^\top \I ) = \lambda\,, 
%
\end{eqnarray}
\end{subequations}
\end{Theorem}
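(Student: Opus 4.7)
The plan is to apply the general Euler--Lagrange equation \eqref{ElmixDDvp-b} from Theorem~\ref{T-main01} to each of the four distributions $\mD_1,\mD_2,\mD_3,\mD_4$ on the $3$-Sasaki manifold, and then specialize the tensor $\delta Q_\mu+\delta_g\bar Q_\mu$ using the explicit $3$-Sasaki formulas in Lemmas~\ref{lemSasakigeodint} and \ref{lemmadeltaQbarterms}. Since by assumption $\I$ is critical for fixed $g$, Theorem~\ref{corI} applies (giving, in particular, total umbilicity of each $\mD_\mu$, which on a $3$-Sasaki manifold is automatic since all $\mD_\mu$ are even totally geodesic by Lemma~\ref{lemSasakigeodint}); this justifies evaluating $\delta Q_\mu+\delta_g\bar Q_\mu$ using \eqref{deltaQQbarSasaki}, which was derived from \cite[Lemma~4]{rz-3} under the exact total-geodesy hypothesis provided by Lemma~\ref{lemSasakigeodint} together with assumption \eqref{assumptionXYZ}.

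First I would handle $\mu=4$. Evaluating \eqref{ElmixDDvp-b} on $X,Y\in\mD_4$, the $\Upsilon_{T_\nu,T_\nu}$ and $\Upsilon_{\tilde T_4,\tilde T_4}$ terms in \eqref{deltaQQbarSasaki} drop out by \eqref{UpsilonTa}; the Casorati terms $2\,\widetilde{\cal T}_\nu^\flat$ and $2\,{\cal T}_4^\flat$ combine via \eqref{widetildeTa} to give $-2\sum_{a\le 3}g_4(X,Y)-6\,g_4(X,Y)=-12\,\<X,Y\>$; the three $\chi_a$ terms are simplified via \eqref{chiaXY} to reproduce the combination $7\sum_a\mathrm{Sym}\<\tilde\phi_4(\tT_{\xi_a}Y,X),\xi_a\>+42\,\<X,Y\>$. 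Collecting with the divergence-of-trace and $\tilde\phi_4$ pieces and adding the scalar factor $\bigl(2\overline{\rm S}_{\mD_1,\mD_2,\mD_3,\mD_4}-\sum_\nu\Div(P_\nu\tr^\bot_\nu\I+P_\nu^\bot\tr^\top_\nu\I)\bigr)g_4$ from \eqref{ElmixDDvp-b} (using \eqref{E-prop-X} with $\I^\wedge=-\I^*$ to rewrite $Y$), one arrives at \eqref{ELSasakiD4}.

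Next I would handle $\mu=a\in\{1,2,3\}$. Since each $\mD_a$ is one-dimensional, \eqref{ElmixDDvp-b} is a scalar equation obtained by evaluating at $(\xi_a,\xi_a)$. Applying \eqref{deltaQQbarSasaki} with $\mu=a$: by \eqref{chi23xi1} and \eqref{chi4xia} the $\chi_\nu$ and $\tilde\chi_a$ terms all vanish at $(\xi_a,\xi_a)$; the $\phi_\nu$ pieces reduce using $\tilde\phi_a$ and the explicit form $\I_{\xi_a}\xi_a$ (the only surviving combination after \eqref{assumptionXYZ} and Theorem~\ref{corI}) to $-9\,\Div(P_a^\perp\I_{\xi_a}\xi_a)$; the Casorati terms give, via \eqref{widetildecalT4}, the constant $-2\sum_{\nu\ne a}1-4=-10$ after sign bookkeeping with \eqref{UpsilonT4xia} (which contributes $-\tfrac12(2n_4)-\tfrac12(4+2n_4)$ combined with the $2\,{\cal T}^\flat_a$ and $2\,\widetilde{\cal T}^\flat_\nu$ terms), producing the constant $-10-2n_4$. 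Adding the universal scalar term from \eqref{ElmixDDvp-b} yields \eqref{ELSasakiDa}.

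The main obstacle is the careful sign and coefficient bookkeeping in step two above, where several tensors on $\mD_4$ (namely the $\phi_\nu$-divergences, the $\chi$-corrections, and the two Casorati terms) must be added with the exact constants given by Lemma~\ref{lemmadeltaQbarterms}; any small error in \eqref{chiaXY} or in the constants produced by \eqref{widetildeTa}, \eqref{UpsilonTa} immediately shifts the ``$30\,\<X,Y\>$'' term in \eqref{ELSasakiD4}. A secondary point requiring care is to verify that under \eqref{assumptionXYZ} and Theorem~\ref{corI} the only nonvanishing components of $\I$ along the $\mD_a$-directions are those entering the $\Div(P_a^\perp\I_{\xi_a}\xi_a)$ term of \eqref{ELSasakiDa}, so that the formally large expression in \eqref{deltaQQbarSasaki} collapses to the compact form stated. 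The converse (sufficiency) then follows by running the same computation backwards, since \eqref{ElmixDDvp-b} is equivalent to the two restrictions \eqref{ELSasakiD4}--\eqref{ELSasakiDa} once the remaining components of $\delta Q_\mu+\delta_g\bar Q_\mu$ vanish identically on the $3$-Sasaki structure.
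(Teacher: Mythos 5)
Your plan is correct and follows essentially the same route as the paper: evaluate the general Euler--Lagrange equation \eqref{ElmixDDvp-b} on each $\mD_\mu$, substitute \eqref{deltaQQbarSasaki}, and collapse all terms via Lemma~\ref{lemmadeltaQbarterms} (the paper likewise gets $-12+42=30$ on $\mD_4$, the constant $-10-2n_4$ on each $\mD_a$, and assembles $-9\,\Div(P_a^\perp\I_{\xi_a}\xi_a)$ from the $\phi$-divergences \emph{plus} the $-\Div(P_a^\perp\tr^\top_a\I)$ term, a contribution your tally of ``$-9$'' should attribute explicitly). The only slips are cosmetic groupings of constants that happen to give the right totals, and the relation for metric connections is $\I^*=-\I$ rather than ``$\I^\wedge=-\I^*$''.
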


\begin{proof}
The Euler-Lagrange equation \eqref{ELSasakiD4} follows from Lemma~\ref{lemmadeltaQbarterms}, \eqref{deltaQQbarSasaki} and Theorem \ref{T-main01}.
To prove \eqref{ELSasakiDa} for $a=1$, we obtain from \eqref{deltaQQbarSasaki} evaluated on $(\xi_1, \xi_1)$ and Lemma \ref{lemmadeltaQbarterms} the following Euler-Lagrange equation:
\begin{eqnarray}\label{ELD1a}
&& \sum\nolimits_{\nu \ne 1} \< \phi_\nu (\xi_1,\xi_1), \frac{1}{2} \tr_\nu^\top \I\> - 2(\Div\phi_\nu^\top)(\xi_1,\xi_1) - \Div (P_\nu \tr^\perp_\nu \I )
\nonumber \\
&& -\,10 -2n_4 +\<{\tilde\phi}_1 (\xi_1,\xi_1), \frac{1}{2}\,\tr_1^\perp \I\> - 2(\Div{\tilde \phi}_1^\perp )(\xi_1,\xi_1) - \Div (P_1^\perp \tr^\top_1 \I ) \nonumber\\
&& +\,2\,\overline{\rm S}_{\,\mD_1,\mD_2,\mD_3,\mD_4}
-\sum\nolimits_{\,i}\Div( \,P_i\tr_{\,i}^\bot \I + \,P_i^\bot\tr_{\,i}^\top \I ) = \lambda\,.
\end{eqnarray}
For all $\nu \ne 1$ we get $\phi_\nu (\xi_1,\xi_1) = 2\,\I_{\xi_1} \xi_1 = {\tilde \phi}_1 (\xi_1,\xi_1)$. Since $\I$ is a contorsion tensor of a metric connection and $\mD_1$ is one-dimensional, we obtain $\I_{\xi_1} \xi_1 = P_1^\perp (\I_{\xi_1} \xi_1 )$.
On the other hand, from \eqref{ELconnectionNewI7} we get $P_1^\perp(\tr^\perp_1 \I ) =0$; hence,
\[
\sum\nolimits_{\nu \ne 1}\<\phi_\nu, \frac{1}{2}\,\tr_\nu^\top \I\> +\<{\tilde\phi}_1, \frac{1}{2}\,\tr_1^\perp\I\> = 2\<\I_{\xi_1}\xi_1, \tr^\perp_1\I\>=0\,.
\]
Hence, \eqref{ELD1a} can be simplified to
\begin{eqnarray} \label{ELD1b}
&& -\sum\nolimits_{\nu \ne 1} \big( 2\,(\Div \phi_\nu^\top) (\xi_1,\xi_1) + \Div (P_\nu \tr^\perp_\nu \I ) \big)
 -10 -2n_4 - 2 (\Div {\tilde \phi}_1^\perp)(\xi_1,\xi_1)  \nonumber\\
&& -\,\Div (P_1^\perp \tr^\top_1 \I )
 +\,2\,\overline{\rm S}_{\,\mD_1,\mD_2,\mD_3,\mD_4} -\sum\nolimits_{\,i}\Div( \,P_i\tr_{\,i}^\bot \I + \,P_i^\bot\tr_{\,i}^\top \I ) = \lambda\,.
\end{eqnarray}
Using
\begin{eqnarray*}
\Div \phi_2^\top (\xi_1, \xi_1) &=& 2\Div (P_2^\top\I_{\xi_1}\xi_1 ) - 2 \< \phi_2 (\nabla_{\xi_2} \xi_1, \xi_1 ), \xi_2\>
- 2\sum\nolimits_{\,j} \< \phi_2^\top (\nabla_{ E_{j}}\xi_1, \xi_1 ), E_{j}\> \\
&=& 2\Div(P_2\I_{\xi_1}\xi_1 ) + 2 \< \I_{\xi_3}\xi_1 + \I_{\xi_1}\xi_3, \xi_2\>,\\
\Div\phi_3^\top (\xi_1, \xi_1)
&=& 2\Div(P_3\I_{\xi_1}\xi_1 ) - 2 \< \I_{\xi_2}\xi_1 + \I_{\xi_1}\xi_2, \xi_3\>,\\
\Div\phi_4^\top (\xi_1, \xi_1)
&=& 2\Div( P_4\I_{\xi_1}\xi_1 ) + 2 \sum\nolimits_{\,j} \< \phi_4 (\tT_{\xi_1} E_{j}, \xi_1 ), E_{j}\> = 2 \Div( P_4\I_{\xi_1}\xi_1 )\,,
\end{eqnarray*}
(because $\tT_{\xi_1} E_{j} \in \mD_4$), \eqref{assumptionXYZ} and
 $\Div {\tilde \phi}_1^\perp (\xi_1, \xi_1) = 2 \Div( P_1^\perp\I_{\xi_1} \xi_1 )$,
since $\nabla_Z\,\xi_1 \perp \xi_1$ for all $Z \in TM$,
we get \eqref{ELSasakiDa} for $a=1$ from \eqref{ELD1b}.
Similarly,
we get \eqref{ELSasakiDa} for $a=2,3$. 
\end{proof}

\begin{Corollary}\label{Cor-3}
Let $(M,g)$ be a 3-Sasaki manifold, and a pair $(g,\I)$ be critical for the action \eqref{Eq-Smix-g}
with respect to adapted variations of $g$ preserving the volume of $(M,g)$ and all variations of \,$\I$,
and let $\I$ be such that \eqref{assumptionXYZ} holds.
Then the following equations 
are valid for some $\lambda\in\RR$:
\begin{subequations}
\begin{eqnarray} \label{trELSasakiD4}
&& - \frac{4+n_4}{n_4} \Div \big( P^\perp_4 \tr^\top_4 \I + \sum\nolimits_{\nu \ne 4} P_\nu \tr^\perp_\nu \I \big) = \lambda-30 \nonumber\\
&&\qquad -\,2\,\overline{\rm S}_{\,\mD_1,\mD_2,\mD_3,\mD_4} +\sum\nolimits_{\,\nu}\Div(\,P_\nu\tr_{\,\nu}^\bot \I + \,P_\nu^\bot\tr_{\,\nu}^\top \I )\,, \\
\label{sumELSasakiD123}
&& -\,9\sum\nolimits_{a=1}^3 \Div(P_a^\perp \I_{\xi_a} \xi_a + \sum\nolimits_{\nu \ne a} P_\nu \tr^\perp_\nu \I ) = \lambda +10 + 2\,n_4 \nonumber\\
&&\qquad -\,2\,\overline{\rm S}_{\,\mD_1,\mD_2,\mD_3,\mD_4} + \sum\nolimits_{\,\nu}\Div( \,P_\nu\tr_{\,\nu}^\bot \I + \,P_\nu^\bot\tr_{\,\nu}^\top \I )\, .
\end{eqnarray}
\end{subequations}
In particular, no manifold admits complete 3-Sasaki structures and metric connections satisfying \eqref{assumptionXYZ} critical for \eqref{Eq-Smix-g}
with respect to adapted variations of $g$ and all variations of \,$\I$.
\end{Corollary}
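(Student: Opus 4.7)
The plan is to derive \eqref{trELSasakiD4} as a trace of \eqref{ELSasakiD4} over $\mD_4$ and \eqref{sumELSasakiD123} as a sum of \eqref{ELSasakiDa} over $a\in\{1,2,3\}$; the impossibility claim then follows by a compactness argument applied to the difference of these two identities.

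First I would take the pointwise trace of \eqref{ELSasakiD4} against a local orthonormal frame $\{E_j\}_{j=1}^{n_4}$ of $\mD_4$. The right-hand side contributes $n_4\lambda$; the explicit scalar-type coefficient of $g_4$ in the fourth line of \eqref{ELSasakiD4} contributes that coefficient times $n_4$; and the $\langle X,Y\rangle$-terms, in particular $30\langle X,Y\rangle$, contribute $30 n_4$. The nontrivial step is to show that the trace of the pointwise tensors $\langle\phi_\nu,\tfrac12\tr^\top_\nu\I\rangle$, $\Div\phi_\nu^\top$, $\langle\tilde\phi_4,\tfrac12\tr^\perp_4\I\rangle$, $\Div\tilde\phi_4^\perp$ and $7\,\mathrm{Sym}\langle\tilde\phi_4(\tT_{\xi_a}\cdot,\cdot),\xi_a\rangle$ combine into further divergence terms proportional to $\Div\bigl(P^\perp_4\tr^\top_4\I+\sum_{\nu\ne 4}P_\nu\tr^\perp_\nu\I\bigr)$. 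Granting this, the divergence coefficient is $n_4$ from the explicit $g_4$-terms of \eqref{ELSasakiD4} plus $4$ from the traced expressions, so dividing through by $n_4$ produces exactly the factor $\tfrac{4+n_4}{n_4}$ appearing in \eqref{trELSasakiD4}. The required cancellations use Lemma~\ref{lemmadeltaQbarterms} (in particular \eqref{widetildeTa}--\eqref{UpsilonT4xia}), the identity $\I=-\I^*$ for metric-compatible connections, the algebraic relations of Theorem~\ref{corI}, and the total geodesy of each $\mD_\mu$ on a 3-Sasaki manifold.

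Second I would sum \eqref{ELSasakiDa} over $a=1,2,3$: each summand contributes $\lambda+10+2n_4-2\overline{\rm S}_{\mD_1,\mD_2,\mD_3,\mD_4}+\sum_i\Div(\cdots)$ on the right-hand side, and the $a$-labelled and $\nu$-labelled divergences on the left regroup into $-9\sum_a\Div\bigl(P_a^\perp\I_{\xi_a}\xi_a+\sum_{\nu\ne a}P_\nu\tr^\perp_\nu\I\bigr)$ after using the decomposition $\sum_i P_i\tr^\perp_i\I=P_a\tr^\perp_a\I+\sum_{\nu\ne a}P_\nu\tr^\perp_\nu\I$ together with the total geodesy of $\mD_a$.

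For the impossibility statement I would use that a complete 3-Sasaki manifold $(M,g)$ is Einstein with positive Einstein constant, see \cite{bg}, hence by Myers' theorem $M$ is automatically compact. Subtracting \eqref{trELSasakiD4} from \eqref{sumELSasakiD123}, the unknown scalar $\lambda$, the scalar-curvature contribution $-2\overline{\rm S}_{\mD_1,\mD_2,\mD_3,\mD_4}$ and the common divergence $\sum_\nu\Div(P_\nu\tr^\perp_\nu\I+P_\nu^\perp\tr^\top_\nu\I)$ cancel, leaving a pointwise identity of the form
\[
\Div W = 40+2n_4,
\]
where $W$ is assembled from the remaining divergence expressions on the two left-hand sides. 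Integrating over the compact manifold $M$ gives $0=(40+2n_4)\vol(M,g)>0$, the desired contradiction. \textbf{Main obstacle.} The delicate point is the trace computation in the first step: one must verify that each of the $\phi_\nu$, $\tilde\phi_4$, $\mathrm{Sym}$ and $\Div$-terms in \eqref{ELSasakiD4} reduces, upon tracing over $\mD_4$, either to a scalar multiple of the prescribed divergence or to zero, with the coefficients fitting together to produce precisely $\tfrac{4+n_4}{n_4}$; the signs and multiplicities force a careful use of Lemma~\ref{lemmadeltaQbarterms}.
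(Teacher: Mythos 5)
Your plan follows essentially the same route as the paper's proof: trace \eqref{ELSasakiD4} over $\mD_4$ (where the paper kills the $7\,\mathrm{Sym}$-term using a basis with $E_{j+n_4/2}=\tT_{\xi_a}E_j$ and shows the leftover inner products $\<\tr^\perp_\nu\I,\tr^\top_\nu\I\>$ vanish via \eqref{ELconnectionNewI7} and \eqref{ELconnectionNew2}, rather than turning them into divergences), combine the equations \eqref{ELSasakiDa} over $a=1,2,3$, and then subtract and integrate over the compact manifold to reach $(40+2n_4)\vol(M,g)=0$. The approach and all key ingredients (including compactness from completeness via \cite{bg}) match the paper's argument.
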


\begin{proof}
Using orthonormal basis $\{ E_j \}$ of $\mD_4$, where $E_{j+n_4/2} = \tT_{\xi_a} E_{j}$ \cite{blair}, we find
\[
\sum\nolimits_j {\rm Sym} ( \< {\tilde \phi}_4 (\tT_{\xi_a} E_{j} , E_{j} ) , \xi_a\> ) = 0.
\]
Hence, taking trace of Euler-Lagrange equations \eqref{ELSasakiD4} we get
\begin{eqnarray*}
&& \sum\nolimits_{\nu \ne 4} \big(\< \tr^\perp_\nu \I, \tr_\nu^\top \I\> - (4+n_4) \Div (P_\nu \tr^\perp_\nu \I)\big)
 {+}\< \tr^\top_4 \I, \tr_4^\perp \I\> {-} (4+n_4) \Div( P^\perp_4 \tr^\top_4 \I ) \nonumber\\
&& = \big(\lambda-30 -2\,\overline{\rm S}_{\,\mD_1,\mD_2,\mD_3,\mD_4}
+\sum\nolimits_{\,\nu}\Div( \,P_\nu\tr_{\,\nu}^\bot \I + \,P_\nu^\bot\tr_{\,\nu}^\top \I ) \big)\,n_4\,.
\end{eqnarray*}
Since $\I$ corresponds to a metric connection, we get $P_\mu\tr_\nu^\top \I
=0
\ (\nu\le 3)$ (as each of these $\mD_\nu$ is one-dimensional and $\< \xi_\nu, \I_{\xi_\nu} \xi_\nu\> =0$), and by \eqref{ELconnectionNewI7} we get
$\< \tr^\perp_\nu \I, \tr_\nu^\top \I\> =0\ (\nu\le 3)$. Hence,
\begin{eqnarray*}
&& -\,(4+n_4) \sum\nolimits_{\nu \ne 4} \Div (P_\nu \tr^\perp_\nu \I)
+\< \tr^\top_4 \I, \tr_4^\perp \I\> - (4+n_4) \Div( P^\perp_4 \tr^\top_4 \I ) \nonumber\\
&& = \big(\lambda-30 - 2\,\overline{\rm S}_{\,\mD_1,\mD_2,\mD_3,\mD_4}
+ \sum\nolimits_{\,\nu}\Div( \,P_\nu\tr_{\,\nu}^\bot \I + \,P_\nu^\bot\tr_{\,\nu}^\top \I ) \big)\,n_4\,.
\end{eqnarray*}
From \eqref{ELconnectionNewI7}, we get $P_4^\perp (\tr_4^\perp \I )=0$, thus \eqref{ELconnectionNew2} yields
 $\< \tr^\top_4 \I,\, \tr_4^\perp \I\> = \< \tr^\top_4 \I,\, P_4^\top\tr_4^\perp \I\> = 0$,
and \eqref{trELSasakiD4} follows. We obtain \eqref{sumELSasakiD123} as the 
average of 
\eqref{ELSasakiDa} for $a=1,2,3$.
The last claim follows from the result in \cite{bg}, where completeness of metric is proved to imply compactness of the manifold, 
the divergence theorem for closed manifolds and comparing the constants on right-hand sides of \eqref{trELSasakiD4} and \eqref{sumELSasakiD123}, which cannot be both zero for any dimension $n_4$.
\end{proof}

\begin{Example}\rm
Let $\I$ be such that \eqref{assumptionXYZ} and all equations of Theorem~\ref{corI} hold, and
\[
 \I_X X=0,\quad
 \I_X Y + \I_Y X = P_4 (\I_{P_4 X} P_4 Y + \I_{P_4 Y} P_4 X),\quad X,Y \in\mathfrak{X}_M\,.
\]
By \eqref{E-Q1Q2-gen} we get $\overline{\rm S}_{\,\mD_1,\mD_2,\mD_3,\mD_4} = {\rm const}$.
Thus, all equations 
\eqref{ELSasakiD4} and \eqref{ELSasakiDa} for $a=1,2,3$ are valid, but with different $\lambda$'s.
Hence, for each $\mu=1, \ldots ,4$ such pair $(g,\I)$ on a 3-Sasaki manifold $(M,g)$ is critical for $\mD_\mu$-variations preserving the volume of $\Omega$, but not for all adapted variations preserving the volume of $\Omega$.
\end{Example}

\begin{Remark}
\rm
The Euler-Lagrange equation on $\mD_4$ is incompatible with those for $\mD_a\ (a=1,2,3)$ on compact manifolds,
see Corollary~\ref{Cor-3}, also when considering \eqref{Eq-Smix-g0}, as the functional of an adapted metric $g$. 
Indeed, by Lemma~\ref{lemmadeltaQbarterms} and \eqref{deltaQmetric} we get a contradiction in \cite[(3.7)]{r-EH-k}, which, adapted to our notation, reads:
\[
- ({\rm S}_{\,\mD_1, \mD_2,\mD_3,\mD_4} +\lambda) g_4 = \delta Q_4  = -12 g_4,\quad
-{\rm S}_{\,\mD_1, \mD_2,\mD_3,\mD_4}g_4 + \lambda=\delta Q_a = 2n_4 - 6 \quad
(a=1,2,3)\,.
%
\]
However, we can consider the following weighted action:
\begin{equation}\label{Eq-Smix-gc}
J_{\mD,c} : g \mapsto\int_{M} \big(\sum\nolimits_{\,a=1}^3 {\rm S}_{\,\mD_a, \mD_a^\perp } + c\,{\rm S}_{\,\mD_4, \mD_4^\perp} \big)\,{\rm d}\vol_g \,.
\end{equation}
Then, a 3-Sasaki metric $g$ on $M^{3+n_4}$ is critical for \eqref{Eq-Smix-gc} 
if and only if $c=- \frac{ n_4} {n_4 + 6}$.
Indeed, for $\tilde{\rm S} = \sum_{a=1}^3 {\rm S}_{\,\mD_a,\mD^\bot_a} + c\,{\rm S}_{\,\mD_4,\mD^\bot_4}$ with $c\in\RR$,
we get
$\delta Q_4 = -6g_4 -6 c g_4 = \lambda\,g_4$,
so $\lambda = -6(1+c)$, and for $a=1,2,3$ we get
$\delta Q_a = -4 +c n_4 -4 +2+n_4 = \lambda$,
so
$\lambda = n_4 (c+1) - 6$.
\end{Remark}

\begin{Corollary}
Let $\mD_\mu\ (\mu=1,2,3)$ be distributions determined by orthonormal vector fields $\xi_1, \xi_2, \xi_3$
on a unit sphere $(S^3,g)$ with the metric $g$ induced from the Euclidean space $\mathbb{R}^4$, such that $\nabla_{\xi_a} \xi_b = \xi_c$ for even permutation of $a,b,c$. Then $\nabla$ is the only metric-compatible connection,
whose contorsion tensor is critical for the action \eqref{Eq-Smix-g} with fixed $g$.
\end{Corollary}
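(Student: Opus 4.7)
The plan is to apply Theorem~\ref{corI} together with the Remark immediately following it (which extends its conclusions to critical points among metric-compatible connections, using $\I=-\I^*$) to the one-dimensional distributions $\mD_a=\langle\xi_a\rangle$ on $(S^3,g)$, and show that the resulting linear system on the components of $\I$ has only the trivial solution.

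First I would introduce coordinates on $\I$ by setting $c_{abc}=\langle\I_{\xi_a}\xi_b,\xi_c\rangle$. Metric compatibility, $\I=-\I^*$, forces each $\I_X$ to be skew-adjoint, i.e.\ $c_{abc}=-c_{acb}$, so in particular $c_{abb}=0$ for all $a,b$ and $c_{aaa}=0$. Because each $\mD_\mu$ is one-dimensional, it is automatically totally umbilical, so the umbilicity conclusion of Theorem~\ref{corI} is vacuous; moreover $T_\mu=0$, $n_\mu=1$, so \eqref{ELconnectionNew2} is excluded by the hypothesis $n_\mu>1$, and \eqref{ELconnectionNewI4}, \eqref{ELconnectionNew5}, \eqref{ELconnectionNewI8} reduce to identities that are already implied by $\I=-\I^*$ on the rank-one subspace $\mD_\mu$.

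The first nontrivial input is \eqref{ELconnectionNewI7}: with $n_\mu=1$ its right-hand side vanishes, and since $\I-\I^*=2\I$, it becomes $P_\mu^\perp\tr_\mu^\perp\I=0$. Writing this out for $\mu=1,2,3$ and using $c_{\nu\nu\nu}=0$, it yields $c_{\xi\xi\rho}=0$ for every pair $\rho\neq\xi$ in $\{1,2,3\}$. Combined with the skew-symmetry $c_{abc}=-c_{acb}$, this eliminates every $c_{abc}$ with a repeated index; only the six components with $\{a,b,c\}=\{1,2,3\}$ may be nonzero, and these reduce to three independent scalars $c_{123},c_{213},c_{312}$ via skew-symmetry.

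The second input is \eqref{ELI5XYZ}, which on the orthonormal frame $\{\xi_1,\xi_2,\xi_3\}$ gives the relations $\langle\I_{\xi_i}\xi_j,\xi_k\rangle+\langle\I_{\xi_j}\xi_k,\xi_i\rangle=0$ for every cyclic triple $(i,j,k)$. After using skew-symmetry in the last two slots, these translate into the linear system $c_{123}=c_{213}$, $c_{213}=c_{312}$, $c_{123}=-c_{312}$, whose only solution is $c_{123}=c_{213}=c_{312}=0$. Hence every component of $\I$ vanishes, so $\bar\nabla=\nabla$. The main technical point is the initial step of invoking the Remark after Theorem~\ref{corI} to justify that, even when variations are restricted to metric-compatible connections, all equations of that theorem remain valid; the rest is linear algebra on a $3$-dimensional space of tensor components.
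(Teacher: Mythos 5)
Your proposal is correct and follows essentially the same route as the paper: both rely on the Remark after Theorem~\ref{corI}, use \eqref{ELconnectionNewI7} together with $\I=-\I^*$ to kill the repeated-index components $\<\I_{\xi_a}\xi_a,\xi_b\>$, and use \eqref{ELI5XYZ} combined with skew-adjointness of each $\I_X$ to force the all-distinct components into a linear system with only the trivial solution. The only (harmless) difference is that you observe the right-hand side of \eqref{ELconnectionNewI7} vanishes automatically because $(2-2/n_\mu)=0$ for $n_\mu=1$, whereas the paper invokes $H_a=0$ from the specific 3-Sasaki structure.
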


\begin{proof}
From \eqref{3Sasaki3different} and the fact that $\I$ corresponds to metric connection, we find the following:
\begin{eqnarray*}
\< \I_{\xi_3} \xi_1, \xi_2 \> = \< \I_{\xi_1} \xi_3, \xi_2 \> = - \< \I_{\xi_1} \xi_2, \xi_3 \>\,,\\
\< \I_{\xi_2} \xi_3, \xi_1 \> = \< \I_{\xi_3} \xi_2, \xi_1 \> = - \< \I_{\xi_3} \xi_1, \xi_2 \> = \< \I_{\xi_1} \xi_2, \xi_3 \>\,,\\
\< \I_{\xi_1} \xi_2, \xi_3 \> = \< \I_{\xi_2} \xi_1, \xi_3 \> = - \< \I_{\xi_2} \xi_3, \xi_1 \> = - \< \I_{\xi_1} \xi_2, \xi_3 \> = 0\,.
\end{eqnarray*}
Hence, $\< \I_{\xi_a} \xi_b, \xi_c \> = 0$ for $a \ne b \ne c \ne a$, $a,b,c \in \{1,2,3\}$.
Also, for a $3$-Sasaki structure we get $H_a =0$ for $a=1,2,3$; using $\I = -\I^*$ and \eqref{ELconnectionNewI7}, see \eqref{E-delta-I-J}, we obtain
\[
0 = P_3 \I_{\xi_2} \xi_2 + P_2 \I_{\xi_3} \xi_3 = P_3 \I_{\xi_1} \xi_1 + P_1 \I_{\xi_2} \xi_2
= P_2 \I_{\xi_1} \xi_1 + P_1 \I_{\xi_2} \xi_2\, .
\]
It follows that $\I_{\xi_1 }\xi_1 = -P_1(\I_{\xi_2}\xi_2 + \I_{\xi_3}\xi_3) = 0$, because by $\I = -\I^*$ we have $P_1\I_{\xi_1}\xi_1 = 0$.
Similarly, $\I_{\xi_2}\xi_2 = \I_{\xi_3} \xi_3 =0$. From $\I = -\I^*$ we get $\< \I_{\xi_a} \xi_b, \xi_b\> = 0$ for $a, b\in\{1,2,3\}$,
thus all components of critical tensor $\I$ vanish.
\end{proof}

\subsection*{Acknowledgements}

The second author was supported by grant Miniatura [2021/05/X/ST1/00359] of  Narodowe Centrum Nauki, Poland.


\end{document}